\documentclass[11pt]{amsart}
\usepackage{amsmath,latexsym,amsfonts,amssymb,amsthm}
\usepackage{geometry}
\usepackage{fullpage}
\usepackage{mathrsfs}
\usepackage{graphicx,color}
\usepackage{tikz-cd}
\usepackage[all,cmtip]{xy}
\usepackage{enumitem}
\usepackage{verbatim}
\usepackage{bbm}
\usepackage{bm}
\usepackage{mathtools}
\usepackage[colorlinks=true, linkcolor=blue!70!black, urlcolor=purple, citecolor=blue!70!black]{hyperref}
\usepackage{tabu}
\usepackage{appendix}

\numberwithin{equation}{section}
\newtheorem{theorem}{Theorem}[section]

\newtheorem{conj}[theorem]{Conjecture}

\theoremstyle{definition}
\newtheorem{corollary}[theorem]{Corollary}
\newtheorem{definition}[theorem]{Definition}
 \newtheorem{question}[theorem]{Question}

  \newtheorem{property}[theorem]{Property}

\newtheorem{remark}[theorem]{Remark}
\newtheorem{example}[theorem]{Example}

\newtheorem{rem}[theorem]{Remark}

\newtheorem{defn-prop}[theorem]{Definition-Proposition}

\newcommand{\bC}{\mathbb{C}}

\newcommand{\bQ}{\mathbb{Q}}

\newcommand{\bG}{\mathbb{G}}

\newcommand{\calL}{\mathcal{L}}

\newcommand{\calF}{\mathcal{F}}

\newcommand{\calX}{\mathcal{X}}

\newcommand{\Hilb}{\mathrm{Hilb}}

\newcommand{\PGL}{\mathrm{PGL}}
\newcommand{\GL}{\mathrm{GL}}

\newcommand{\Proj}{\mathrm{Proj}}

\newcommand{\Aut}{\mathrm{Aut}}

\newcommand{\PP}{\mathbb{P}}
\newcommand{\bP}{\mathbb{P}}
\newcommand{\bA}{\mathbb{A}}

\newcommand{\calO}{\mathcal{O}}

\newcommand{\calM}{\mathcal{M}}

\usepackage{graphicx}
\newcommand{\bZ}{\mathbb{Z}}

\newcommand{\Supp}{\textrm{Supp}}

\newcommand{\ord}{\mathrm{ord}}

\newcommand{\vol}{\mathrm{vol}}

\newcommand{\lct}{\mathrm{lct}}

\newcommand{\tX}{{\widetilde{X}}}

\newcommand{\Fut}{\mathrm{Fut}}

\newcommand{\tDelta}{{\widetilde{\Delta}}}

\newcommand{\Ric}{\mathrm{Ric}}

\begin{document}

\title{On moduli of Fano varieties: an introduction to K-stability and K-moduli}
\author{Kristin DeVleming}
\date{\today}

\begin{abstract}
This survey article is an accompaniment to the 2025 Summer Research Institute in Algebraic Geometry Bootcamp on K-stability and K-moduli.  It is aimed at graduate students and intended to provide the necessary background to begin research on explicit K-moduli problems. 
\end{abstract}

\maketitle

\setcounter{tocdepth}{2}

\tableofcontents

\section*{Foreword}

This survey article is an introduction to K-stability and K-moduli of Fano varieties, with an emphasis on explicit computations.  My intention is that students with a first course in algebraic geometry, at the level of Hartshorne, will be able to learn the foundations of K-moduli theory by working through the text and exercises at the end of each section.  Many of the exercises are hands-on and will work through explicit examples of stable or unstable objects in K-moduli spaces.  

This is ultimately intended to be part of a larger, more comprehensive manuscript with Dori Bejleri on moduli of higher dimensional varieties. 

\section{Introduction}

We begin with a brief history of K-stability.

\begin{definition}
A smooth variety $X$ is called a \textbf{Fano} variety if $-K_X$ is ample (i.e. for some $m\gg 0$, the rational map $|-mK_X|: X \dashrightarrow \bP(H^0(-mK_X))$ is an embedding).  
\end{definition}

If $\dim X = 1$, $X$ is Fano if and only if $X = \bP^1$.  In general, $\bP^n$ is Fano for any $n$, but there are many other types of Fano varieties in higher dimensions.  When $\dim X = 2$, a Fano surface is called a \textit{del Pezzo} surface.

It is an old question in differential geometry to study when Fano varieties can be equipped with a K\"ahler-Einstein (KE) metric.  We won't be using this perspective in this article, but it provides relevant background on how K-stability came to be.    

A smooth K\"ahler variety with K\"ahler form $\omega$ is said to have a KE metric if $\omega$ satisfies the Einstein equation \[\Ric(\omega) = \lambda \omega\] for some constant $\lambda$.  A smooth projective variety with ample canonical class always admits a KE metric, proved independently by Aubin and Yau in 1978 \cite{Aubin,Yau}, and one with trivial canonical class always admits a KE metric, proved by Yau \cite{Yau}.  However, for Fano varieties, it was known much earlier that they \textit{cannot} always admit a KE metric. For example, in 1957, Matsushima proved that if $X$ is KE, then $\Aut(X)$ is reductive \cite{Matsushima}.  This condition is not even satisfied for all smooth del Pezzo surfaces.  It was of interest to differential geometers to formulate a notion for Fano varieties that precisely captured the existence of a KE metric.

Several years later, the notion of K-stability was introduced.  In 1992, Ding and Tian \cite{DingTian} introduced the generalized Futaki invariant to conjecturally capture the existence of a KE metric, and proved that the existence of such a metric implies this invariant is non-negative.  In 1997, Tian \cite{Tian} (analytically) and later Donaldson \cite{Donaldson} in 2002 (algebraically), the notion of K-stability was formally defined using the Futaki invariant, and the \textit{Yau-Tian-Donaldson Conjecture} was made: a smooth Fano variety is K-polystable if and only if it admits a KE metric.  This conjecture was proven by Chen, Donaldson, and Sun in 2012 \cite{CDS1,CDS2,CDS3} and Tian \cite{Tian15}, and has since been extended beyond the smooth case \cite{LXZ}.  We will define K-stability below, but you may be wondering:

\begin{question}
What does this have to do with algebraic geometry?
\end{question}

As we'll see shortly, the algebraic formulation of K-stability looks like other powerful notions in algebraic geometry (for example, GIT).  Also, the definition has something to do with degenerating varieties in families, so one may speculate a connection moduli problems.  However, it is \textit{remarkable} that this differential geometric notion is exactly the correct thing to study to get well-behaved moduli spaces of Fano varieties, and \textit{remarkable} that it has so many connections to older algebro-geometric concepts (e.g. singularities and the minimal model program).  

In the words of Chenyang Xu, \textit{``The concept of K-stability is one of the most precious gifts differential geometers
brought to algebraic geometers.''}

In this survey, we will define K-stability, state the K-moduli theorem and the fundamental results needed for its proof, and focus on explicit computations for Fano varieties and log Fano pairs.  This will only scratch the surface of K-stability and K-moduli in general; a comprehensive textbook on the subject has been written by Xu \cite{Xu23}.

\section*{Acknowledgments}

I was first introduced to K-stability by Yuchen Liu at an AMS Sectional Meeting as a graduate student.  I am thankful to Yuchen Liu and my other collaborators in this area for their enthusiasm and ideas: Hamid Abban, Kenneth Ascher, Dori Bejleri, Harold Blum, Ivan Cheltsov, Giovanni Inchiostro, Lena Ji, Patrick Kennedy-Hunt, Ming Hao Quek, Julie Rana, Fei Si, and Xiaowei Wang.  I also thank Chenyang Xu and Ziquan Zhuang for teaching me much about the subject.  I gratefully acknowledge support from the National Science Foundation through NSF DMS Grant 2302163. 

Finally, I am thankful to all of the participants in my 2025 SRI Bootcamp group for a productive and energetic week together: Suraj Dash, Joshua Enwright, Fernando Figueroa, Gomathy Ganapathy, Myeong Jae Jeon, Tyson Klingner, Daniel Mallory, Anda Tenie, Ying Wang, and Wern Yeong.  We have a forthcoming paper on our results from the Bootcamp.

\section{Singularities of the Minimal Model Program}\label{s:singularities}

K-stability is related to singularities of Fano varieties, and we begin with a brief introduction to the singularities of the minimal model program.  For the purposes of this survey, all varieties will be assumed normal.  A \emph{pair} $(X,\Delta)$ consists of a normal variety $X$ and an effective $\bQ$-divisor $\Delta$ on $X$ such that $\Supp(\Delta)$ does not contain a codimension 1 singular point of $X$.  We will say $(X,\Delta)$ is \textit{$\bQ$-Gorenstein} if $K_{X}+\Delta$ is $\bQ$-Cartier.  The pair $(X,\Delta)$ is  \emph{projective} if $X$ is projective.

\begin{definition}
    A log resolution of a pair $(X,\Delta)$ is a pair $(Y,\Delta_Y)$ such that $\pi: Y \to X$ is a resolution $Y$ of $X$ and, if $\{E_i\}$ are the exceptional divisors of $\pi$, $\mathrm{Supp} \ \pi^{-1}_* \Delta \cup \{E_i\}$ is simple normal crossing.
\end{definition}

\begin{definition}{\cite[Definition 2.28]{KM98}}
    If $(X,\Delta)$ is a pair and $K_X +\Delta$ is $\bQ$-Cartier, write $\Delta = \sum b_j \Delta_j$ where $\{ \Delta_j\}$ are prime divisors and $b_j \in \bQ^{\ge 0}$.  Let $\pi: Y \to X$ be a log resolution of singularities and $\Delta_Y = \sum b_j \tilde{\Delta}_j$, where $\tilde{\Delta}_j$ is the strict transform of $\Delta_j$ on $Y$.  Let $\{ E_i \}$ be the components of the exceptional locus.  Then, for some rational numbers $a_{X,\Delta}(E_i)$, we can write
    \[ K_Y + \Delta_Y = \pi^* (K_X+\Delta) + \sum_{E_i} a_{X,\Delta}(E_i) E_i .\]

    We define the \textit{discrepancy} of the pair $(X,\Delta)$ to be \[\mathrm{discrep } (X,\Delta) = \inf_{E/X: E \text{ exceptional }} a_{X,\Delta}(E).\]  Note the infimum is taken over all exceptional divisors in any log resolution of $(X,\Delta)$.

    The pair $(X,\Delta)$ is said to have:
        \begin{itemize}
            \item \textit{terminal singularities} if $\mathrm{discrep } (X,\Delta) > 0 $,
            \item \textit{canonical singularities} if $\mathrm{discrep } (X,\Delta) \ge 0 $,
            \item \textit{Kawamata log terminal singularities} (klt) if $\mathrm{discrep } (X,\Delta) > -1 $ and $b_j < 1$ for each $j$, 
            \item \textit{purely log terminal singularities} (plt) if $\mathrm{discrep } (X,\Delta) > -1 $, and
            \item \textit{log canonical singularities} (lc) if $\mathrm{discrep } (X,\Delta) \ge -1 $.
        \end{itemize}
\end{definition}

The discrepancies can be computed on one log resolution, provided $\Delta_Y$ is smooth (i.e. any intersections among the $D_j$ are separated): 

\begin{theorem}{\cite[Corollary 2.32]{KM98}}\label{t:discrepofpair}
    Let $\pi: (Y,\Delta_Y) \to (X,\Delta)$ be a log resolution such that $\Delta_Y$ is smooth, where $\Delta_Y$ is the strict transform of $\Delta$ on $Y$.  If $a_{X,\Delta}(E_i) \ge -1$ for every exceptional divisor $E_i$ of $\pi$, then $\mathrm{ discrep } (X,\Delta) = \min \{ \min_i \{a_{X, \Delta}(E_i)\} , \min_j \{ 1- b_j \}, 1 \} $.  
\end{theorem}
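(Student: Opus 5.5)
Since any exceptional divisor over $X$ appears on some higher model, I would reduce everything to the log resolution $\pi: Y \to X$ by working with the pair $(Y, \Delta_Y - \sum_i a_i E_i)$, where $a_i = a_{X,\Delta}(E_i)$. By the defining formula,
\[ K_Y + \Delta_Y - \sum_i a_i E_i = \pi^*(K_X+\Delta), \]
so for any divisor $F$ over $X$ we have $a_{X,\Delta}(F) = a_{Y,\Delta_Y - \sum a_i E_i}(F)$, because discrepancies are computed by pulling back the same class. Set $B = \Delta_Y - \sum_i a_i E_i = \sum_j b_j \tilde\Delta_j + \sum_i (-a_i) E_i$. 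This is a divisor on a smooth variety with simple normal crossing support and all coefficients $\le 1$ (using $a_i \ge -1$), though it is possibly non-effective. The problem therefore becomes computing discrepancies of a smooth snc pair, with the exceptional divisors $F$ over $X$ split into two kinds: the $E_i$ themselves, and divisors exceptional over $Y$.

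The first kind contributes $\min_i a_i$ directly. For the second kind, the key local computation is blowing up a smooth center $Z \subset Y$ of codimension $c \ge 2$ lying in exactly the components $D_1,\dots,D_k$ of $B$, with coefficients $d_1,\dots,d_k$. The exceptional divisor then has discrepancy $c-1-\sum_{l\le k} d_l$. I would then show the general lemma (as in \cite[Lemma 2.29, Corollary 2.31]{KM98}): for $(Y,B)$ with $Y$ smooth, $B=\sum d_l D_l$ snc and all $d_l\le 1$, the discrepancy over $Y$ equals
\[ \min\Big\{1,\ \min_l (1-d_l),\ \min_{D_l\cap D_m\neq\emptyset,\ l\neq m}(1-d_l-d_m)\Big\}. \]
The bound $\le$ comes from blowing up a general codimension-2 locus in $Y$ (giving $1$), a codimension-2 locus inside a single $D_l$ (giving $1-d_l$), and a component of $D_l\cap D_m$ (giving $1-d_l-d_m$). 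The bound $\ge$ comes from induction on the number of blowups needed to reach a given divisor: each toroidal blowup preserves the snc condition and the coefficients-$\le1$ condition, and the new discrepancy, which is at least $c-1-\sum d_l$ with $\sum$ over at most $c$ components, dominates the claimed minimum. An arbitrary divisorial valuation is reached by a sequence of blowups of smooth centers (its centers), and I would reduce to centers snc with respect to the boundary.

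Finally, I would specialize to our $B$. Positive-coefficient components are only the $\tilde\Delta_j$ with $b_j>0$ and the $E_i$ with $a_i<0$. The hypothesis that $\Delta_Y$ is smooth means $\tilde\Delta_j \cap \tilde\Delta_{j'} = \emptyset$ for $j\ne j'$, so no pair-term involves two $\Delta$-components. Pair terms involving some $E_i$ with $a_i<0$ give $1+a_i-d\ge a_i$ when $d\le 1$, hence $\ge \min_i a_i$; pairs with nonpositive coefficients are $\ge 1-d_l$ anyway. Terms $1-d_l$ with $D_l=E_i$ give $1+a_i\ge a_i$. Hence the minimum over everything reduces to $\min\{\min_i a_i, \min_j(1-b_j), 1\}$.

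The main obstacle is the $\ge$ direction of the snc lemma: controlling every divisor exceptional over $Y$, not just first blowups. I would handle it by the standard induction, factoring through the blowup of the generic point of the center. After localizing at the generic point of the center, I would reduce to a toroidal/monomial computation, or cite \cite[Corollary 2.31]{KM98} directly if permitted.
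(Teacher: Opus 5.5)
Your proposal is correct and is essentially the argument behind the citation (the paper gives no proof of its own): \cite[Corollary 2.32]{KM98} follows by applying the snc formula \cite[Corollary 2.31]{KM98} to the crepant, possibly non-effective snc pair $(Y,\Delta_Y-\sum_i a_{X,\Delta}(E_i)E_i)$. That formula is itself proved from the blowup computation \cite[Lemma 2.29]{KM98} and induction along the blowups of successive centers, and one then discards the pair terms exactly as you do.

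The one thing to fix is your claim that every coefficient of $B$ is $\le 1$. This also needs $b_j\le 1$, which the statement as printed omits but which is genuinely necessary. For example, take $X=\bA^2$, $\Delta=2L$ and $\pi$ the blowup of a point of $L$: then $a_{X,\Delta}(E_1)=-1$ and the formula gives $-1$, yet blowing up the point $\widetilde{L}\cap E_1$ produces a divisor of discrepancy $-2$. So $b_j\le 1$ must be added as an explicit hypothesis.
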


Given a non-necessarily log canonical pair, we also need the following: 

\begin{definition}\label{defn:lc}
    If $\Delta \subset X$ is a nonzero $\bQ$-Cartier divisor on a normal, log canonical variety $X$ with $\bQ$-Cartier canonical divisor, the \textit{log canonical threshold} of $\Delta$ is the maximum nonnegative number $c$ such that $(X,c\Delta)$ is log canonical.  This is denoted $\lct(X,\Delta)$ or simply $\lct(\Delta)$ if $X$ is clear from context. By Theorem \ref{t:discrepofpair}, we see that $c \le 1$. 
\end{definition}

\begin{example}
    Let $X$ be the cone $xy = z^2 \subset \bA^3$.  Blowing up the origin $(0,0,0) \subset \bA^3$, we obtain $\pi: Z \to \bA^3$.  Let $Y$ be the strict transform of $X$ and let $\pi_Y = \pi|_Y: Y \to X$.  This is a resolution of singularities of $X$.  Indeed, $Y$ is smooth by the local equations of the blow-up.  Furthermore, if $E \subset Z$ is the exceptional divisor of $\pi$, $E \cong \bP^2$, and $E_Y := E \cap Y$ is the exceptional divisor of $\pi_Y$.  Then, $E_Y$ is a conic in $E$ (seen from the blow up equation) and $(E_Y)^2 = E \cdot E \cdot Y = E|_E \cdot Y|_E$.  Because $\calO_E(E) = \calO_{\bP^2}(-1)$, we see that $(E_Y)^2 = -2$.   

    Therefore, we may compute $a_X(E_Y)$: consider the equality \[ K_Y = \pi_Y^*K_X + a_X(E_Y)E_Y.\]

    Adding $E_Y$ to each side, we obtain 
    \[ K_Y+E_Y = \pi_Y^*K_X + (a_X(E_Y)+1)E_Y.\]

    Taking the intersection with $E_Y$, we find 
    \begin{align*}
         (K_Y+E_Y)\cdot E_Y &= (\pi_Y^*K_X + (a_X(E_Y)+1)E_Y)\cdot E_Y \\
         -2 &= 0 + (a_X(E_Y)+1)(-2)
    \end{align*}
    where the left side holds by adjunction and $\pi_Y^*K_X \cdot E_Y = 0$ by the projection formula.  Therefore, $a_X(E_Y) + 1 = 1$, so $a_X(E_Y) = 0$.  In particular, we have found $\mathrm{ discrep }(X) = 0$ and hence $X$ is canonical.
\end{example}

\begin{example}
    Let $X$ be a cone over an elliptic curve.  By blowing up the cone point, we obtain a resolution $\pi: Y \to X$ with exceptional divisor $E$ an elliptic curve.  We compute 
    \begin{align*}
        K_Y &= \pi^*K_X + a_X(E) E \\
        K_Y +E &= \pi^*K_X + (a_X(E)+1) E \\
        (K_Y +E)\cdot E &= (\pi^*K_X + (a_X(E)+1) E)\cdot E \\
        0 &= 0 + (a_X(E)+1)E^2
    \end{align*}

    Because $E$ is contractible, we have $E^2 < 0$, so this implies $a_X(E) = -1$ and hence $\mathrm{ discrep }(X) = -1$ so $X$ is log canonical.
\end{example}

\subsection{Exercises}

\begin{enumerate}

    \item Prove that terminal surface singularities are smooth.
    
    \item Compute the log canonical thresholds of the pair $(\bA^2, C)$, where $C$ is: 
        \begin{enumerate}
            \item $y^2 = x^3$
            \item $y^2 = x^4$
            \item $y^2 = x^n$, $n \ge 3$
            \item $n$ lines passing through the origin
        \end{enumerate}
        
    \item Let $X$ be the cone over the rational normal curve of degree $d$.  Compute $\mathrm{ discrep }(X)$. 

    \item Let $X$ be the cone over the rational normal curve of degree $d$ and let $D$ be a ruling of the cone.  Compute $\mathrm{ discrep } (X,D)$.

    \item Let $X$ be a cone over a curve of genus $g$ with $g > 1$.  Prove that $X$ is not log canonical.

    \item Give an example of a variety $X$ with non $\bQ$-Cartier canonical divisor.
\end{enumerate}

\section{Introduction to K-stability}\label{s:introtoKstability}

Our first goal is to introduce the notion of K-stability with connections to other invariants and classify K-(semi)stable smooth del Pezzo surfaces.

\subsection{K-stability via test configurations} 

Without further ado, let's define K-stability.  We will not restrict ourselves to the smooth world; let us consider arbitrary normal projective varieties.  

\begin{definition}[Tian \cite{Tian}, Donaldson \cite{Donaldson}]
Let $(X,L)$ be a polarized projective variety of dimension $n$, and suppose $X$ is normal.  Because $L$ is ample, for $m \gg 0$, there is an embedding $|L^m|: X \to \bP^N$.  For any action $\bG_m$ on $\bP^N$, there in an induced action $\bG_m$ on the class $[X] \in \Hilb(\bP^N)$.  Let $[X_0] = \lim_{t\to 0} t \cdot [X]$. 

A \textit{test configuration} is the induced family 

\begin{center}
\begin{tikzcd}
\bG_m \times (X, L^m) \arrow[r] \arrow[d] & (\calX, \calL) \arrow[d] \\
\bG_m = \mathbb{A}^1 \setminus \{0\} \arrow[r] & \mathbb{A}^1 \\
\end{tikzcd}
\end{center}
\end{definition}

\begin{remark}
    If you have seen GIT, this definition should look somewhat familiar to 1-parameter subgroups in GIT.  Indeed, any 1-parameter subgroup will induce a test configuration.  The key difference between the two is that test configurations are not restricted to a fixed ambient projective space: we can continue to take higher powers of the line bundle $L$ to re-embed $X$ into larger projective spaces.
\end{remark}

Given a test configuration, by Riemann-Roch, we can compute 

\begin{align*}
    d_k := h^0 (X, L^k) &= a_0 k^n + a_1 k^{n-1} + \dots \\
    &= \frac{L^n}{n!} k^n - \frac{L^{n-1} \cdot K_X}{2(n-1)!} k^{n-1} + \dots .
\end{align*}

Since $\bG_m$ is acting on $(\calX, \calL)$, is it acting on $(\calX_0, \calL_0) =: (X_0, L_0)$, and hence $H^0(X_0, L_0)$.  For $k \gg 0$, the total weight of this action on $H^0(X_0, L_0^{k/m})$ also grows as a polynomial: 

\[ w_k = b_0 k^{n+1} + b_1 k^n + \dots .\]

 \begin{rem}
    To compute the $b_i$ and show that this is a polynomial, we can complete the family $(\calX, \calL)$ over $\mathbb{A}^1$ to a family $(\overline{\calX}, \overline{\calL})$ over $\bP^1$ by adding the trivial fiber $(X,L^m)$ over $\infty \in \bP^1$.  We do this by gluing the family $(\calX, \calL)$ to the trivial family $X \times \bP^1 \setminus \infty$ along $\mathbb{A}^1 \setminus 0$. 
 Then, we have a test configuration $(\overline{\calX}, \overline{\calL}) \to \bP^1$ with a $\bG_m$ action and can use equivariant Riemann-Roch to compute the weight:

\begin{align*}
    w_k &= b_0 k^{n+1} + b_1 k^n + \dots \\
    &= \frac{\overline{\calL}^{(n+1)/m}}{(n+1)!}k^{n+1} - \frac{\overline{\calL}^{(n/m)} \cdot K_{\overline{\calX}/\bP^1}}{2n!} k^n + \dots . 
\end{align*} 

For related discussion, see \cite[\S 2.1.2]{Xu23}.
 \end{rem}

\begin{definition}[Tian, Donaldson]
The \textit{generalized Futaki invariant} $\Fut(\calX, \calL)$ of the test configuration $(\calX, \calL)$ is \[ \Fut (\calX, \calL) = \frac{a_1 b_0 - a_0 b_1}{a_0^2}.\]
\end{definition}

This expression comes from the quotient \[ F(k) := \frac{w_k}{kd_k} = F_0 + F_1 \frac{1}{k} + F_2 \frac{1}{k^2} + \dots \] and \[\Fut(\calX, \calL) = - F_1.\]  While not immediately obvious form the definition, this invariant is very closely related to Hilbert stability and Chow stability (which are defined in similar ways, by Mumford).  

In practice, for $X$ Fano, we use $L = - mK_X$.  We will exclusively use this in what follows, and $\Fut(\calX, \calL)$ takes on a particularly nice form \cite[Prop. 2.17]{Xu23}: 

\[ \Fut(\calX, \calL) = \frac{1}{2(-K_X)^n} \left( \left( \frac{1}{m} \overline{\calL} \right)^n \cdot K_{\overline{\calX}/\bP^1} + \frac{n}{n+1} \left(\frac{1}{m} \overline{\calL} \right)^{n+1} \right) .\] 

We will use the Futaki invariant to define K-stability, but make one further simplification.  It turns out that we can restrict the test configuration definition to only sufficiently``nice'' varieties $X_0$.  

\begin{definition}
A test configuration $(\calX, \calL)$ is called a \textbf{special test configuration} if $\calX$ is a $\mathbb{Q}$-Gorenstein family of $\mathbb{Q}$-Fano varieties, i.e. $\calL \sim -mK_{\calX}$ and $X_0$ has klt singularities.  
\end{definition}

\begin{theorem}[\cite{LX14}]\label{thm:specialtestconfig}
To test K-(semi/poly)stability, one only needs to test special test configurations. 
\end{theorem}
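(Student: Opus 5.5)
The plan is to follow the Li--Xu strategy. Starting from an arbitrary test configuration $(\calX,\calL)$ of a $\bQ$-Fano variety $X$ with $\calL\sim -mK_X$ on the generic fiber, I would run a sequence of birational modifications over $\bA^1$ (possibly after a finite base change $t\mapsto t^d$). Each modification should keep the family $\bG_m$-equivariant and should not increase the Futaki invariant, suitably normalized by $d$. The process ends at a special test configuration. Given this, the statement follows immediately. If every special test configuration has $\Fut\ge 0$ (resp.\ $>0$, or $=0$ only for product ones), then so does every test configuration: an arbitrary one has $\Fut$ bounded below by that of some special one. For the equality case one also checks that if $\Fut$ is unchanged along the process then the original configuration was already special, or was a product.

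The steps in order are as follows.

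\textbf{Step 1 (normalization).} Replace $\calX$ by its normalization. On the compactification over $\bP^1$, the difference of Futaki invariants is $-\frac{1}{2(-K_X)^n}$ times the non-negative intersection $(\tfrac1m\overline{\calL})^n\cdot(\text{conductor})$, computed from the formula for $\Fut$ quoted above. So normalizing only lowers $\Fut$.

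\textbf{Step 2 (semistable reduction and lc modification).} After a base change, take a semistable reduction. Then take a log canonical modification of $(\calX,\calX_0)$, using the MMP for klt/lc pairs (BCHM). This makes $(\calX,\calX_{0,\mathrm{red}})$ log canonical.

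\textbf{Step 3 (MMP with scaling).} Run a $K_{\calX}$-MMP over $\bA^1$ with scaling of $\calL$. Write the line bundle as $\calL_\lambda=\calL+\lambda(-K_{\calX})$ interpolating toward the anticanonical class, and compute
\[
\frac{d}{d\lambda}\Fut
\]
along the MMP. The expression $(\tfrac1m\overline{\calL})^n\cdot K_{\overline{\calX}/\bP^1}+\tfrac{n}{n+1}(\tfrac1m\overline{\calL})^{n+1}$ has derivative proportional to
\[
-\overline{\calL}^{\,n-1}\cdot(\overline{\calL}+K)^2\cdot(\ldots)\le 0,
\]
by the Hodge index type inequality on the central fiber. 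This is the key computation: the Futaki invariant decreases. The output is a test configuration with $\calL\sim_{\bA^1}-mK_{\calX}$, i.e.\ $-K_{\calX}$ relatively ample and $\calX_0$ log canonical (semi-log canonical, reduced).

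\textbf{Step 4 (klt central fiber).} Upgrade log canonical to klt. If $\calX_0$ is reducible or only lc, extract a suitable divisor over $\calX_0$ computing a log canonical place. Run a further MMP that contracts all other components to produce an irreducible central fiber $E$, and pass to the anticanonical model. Adjunction and a connectedness argument (Koll\'ar--Shokurov) show $E$ is a klt Fano. One again checks that $\Fut$ only drops, now with the normalization factor from the base change. The decrease here uses $\mathrm{Fut}$ expressed through $A_X(\ord_E)-S_X(\ord_E)$ type quantities, or directly the intersection formula together with the log discrepancy of the extracted divisor being at most that of components of $\calX_0$.

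I expect the main obstacle to be Steps 3--4. There one must simultaneously control the sign of the Futaki invariant under each flip, divisorial contraction and base change, and guarantee the existence and termination of the relevant MMPs over $\bA^1$. Termination with scaling relies on BCHM, since the general fiber is klt Fano and hence everything is relatively of Fano type. For the monotonicity, I would first try the derivative computation along the scaling parameter and reduce it to the negativity of $(K+\calL)^2\cdot\calL^{n-1}$ on a fiber-supported divisor. For polystability, the additional point is tracking that equality forces every step to be an isomorphism.
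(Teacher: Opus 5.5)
Your outline follows the same route as the paper's sketch, which is the Li--Xu argument: normalize, base change, lc modification, MMP with scaling, then pass from lc to klt, with $\Fut$ never increasing. However, the step you single out as the key computation is set up in the wrong direction, and as written it gives the opposite inequality.

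Normalize so that $\calL|_X\sim_{\bQ}-K_X$, write $K:=K_{\overline{\calX}/\bP^1}$, and put $D:=K+\calL$. This is a vertical divisor, up to fibre classes, which do not change $\Fut$. For $\calL_t:=\calL+tD$ one has $K=(1+t)D-\calL_t$. So the paper's formula gives
\[
2(-K_X)^n\,\Fut(\calX,\calL_t)=(1+t)\,\calL_t^{n}\cdot D-\tfrac{1}{n+1}\,\calL_t^{n+1},\qquad \frac{d}{dt}\Bigl(2(-K_X)^n\,\Fut(\calX,\calL_t)\Bigr)=n(1+t)\,\calL_t^{n-1}\cdot D^2\le 0,
\]
where the inequality is the Hodge index (Zariski) lemma for vertical divisors. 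The $K_{\calX}$-MMP with scaling of $\calL$ is exactly this path with $t$ increasing, because $K+\lambda\calL=(\lambda-1)\,\calL_{1/(\lambda-1)}$ with $\lambda\downarrow 1$. So one adds $K$, not $-K$.

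Your family is $\calL+\lambda(-K_{\calX})=(1+\lambda)(\calL-sD)$ with $s=\lambda/(1+\lambda)$. It runs along the opposite ray, where the derivative is $-n(1-s)\,\calL_{-s}^{n-1}\cdot D^2\ge 0$. Your displayed $-\calL^{n-1}\cdot(\calL+K)^2\cdot(\ldots)$ is in fact the correct derivative for your parametrization, but Hodge index makes it $\ge 0$, not $\le 0$. For instance, if $-K_{\calX}$ is ample, then $\Fut(\calX,-K_{\calX}+\varepsilon D)\le\Fut(\calX,-K_{\calX})$ for small $\varepsilon>0$. So sliding the polarization toward $-K$ on a fixed model raises the Futaki invariant.

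The anticanonical model is reached not by interpolating toward $-K$, but because of what the MMP does:
\begin{itemize}
\item Each ray contracted at a value $\lambda>1$ is $K$-negative and $\calL$-positive, hence $D$-negative, so its curves lie in $\calX_0$.
\item No Mori fibre contraction can occur while $K+\lambda\calL$ is big over $\bA^1$.
\item $\Fut$ is continuous across each flip or contraction, since both sides carry the pullback of the same divisor from the common base.
\item When $\lambda$ reaches $1$, $D$ is nef over $\bA^1$ and vertical, hence a rational multiple of $\calX_0$ by Zariski's lemma. That is, $\calL\sim_{\bQ,\bA^1}-K$.
\end{itemize}

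The same formula fills a gap in your Step 2, which gives no reason why $\Fut$ does not increase. Let $\pi\colon\calX^{\mathrm{lc}}\to\calX$ be the lc modification of the normalized base-changed family, with reduced central fibre. Since $K_{\calX^{\mathrm{lc}}}+\calX^{\mathrm{lc}}_0$ is $\pi$-ample, the divisor $\pi^*\calL+t(K_{\calX^{\mathrm{lc}}}+\pi^*\calL)$ is ample over $\bA^1$ for $0<t\ll1$. Its $\Fut$ at $t=0$ equals that of $\calL$ by the projection formula. Combined with $\Fut(\calX^{(d)})\le d\,\Fut(\calX)$ for normalized base change, Steps 2 and 3 form a single path in the $+K$ direction.

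Finally, your termination argument needs correcting: $\calX$ need not be of Fano type over $\bA^1$. Termination instead comes from BCHM applied to $(\calX,(1-\varepsilon)\calX_0)$, after adding a small multiple of $\calL$ to make the boundary big. This pair is klt because every lc centre of $(\calX,\calX_0)$ lies in $\calX_0$, and its log canonical class is $\equiv_{\bA^1}K$.
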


We give a very brief idea of the proof; for more, see \cite{LX14}.

\begin{proof}[Idea of proof.]
Use the Minimal Model Program (MMP).  Starting with any test configuration $(\calX, \calL)$, perform birational modifications (MMP operations) or finite base change and normalization to produce a special test configuration $(\calX^s, -mK_{\calX^s})$.  Then, show that the Futaki invariant can only decrease under these birational operations.  
\end{proof}

\begin{remark}
If $(\calX, \calL)$ is a special test configuration, \[ \Fut(\calX, \calL) = - \frac{1}{2(-K_X)^n(n+1)} \left(-K_{\overline{\calX}/\bP^1} \right)^{n+1} .\]
In particular, for special test configurations, the sign of the Futaki invariant is determined by the sign of $\left(-K_{\overline{\calX}/\bP^1} \right)^{n+1}$.
\end{remark}

With this definition, we define K-semistability.

\begin{definition}[Tian, Donaldson]\label{def:stability}
Let $X$ be a variety such that $-K_X$ is ample.  $X$ is 

\begin{enumerate}
    \item K-semistable if $\Fut(\calX, \calL) \ge 0$ for all special test configurations $(\calX, \calL)$.
    \item K-stable if $\Fut(\calX, \calL) \ge 0$ for all special test configurations $(\calX, \calL)$, and equality holds if and only if $(\calX, \calL)$ is trivial ($\calX \cong X \times \mathbb{A}^1$ and the $\bG_m$-action on fibers is trivial).
    \item K-polystable if $X$ is K-semistable and, if $\Fut(\calX, \calL)= 0$ for any special test configuration, then $\calX \cong X \times \mathbb{A}^1$ is a product.
\end{enumerate}
\end{definition}

\begin{rem}
    By \cite{BX19}, if $X$ is K-stable, then $\Aut(X)$ is finite, and if $\Aut(X)$ is finite, then $X$ is K-stable if and only if it is K-polystable.  We will explore K-polystability in more detail in later sections.  If you are familiar with GIT, think of `polystable' as a closed orbit condition as it is in GIT.  
\end{rem}

If the complexity of the definition did not scare you off, hopefully here you are seeing a \textcolor{red}{RED FLAG}.  This definition depends on the $\bG_m$ action and the power $m$ used for $L$! 

\begin{remark}[Red Flag!]\label{rmk:checkable}
To test if a variety is K-(semi/poly)stable, we must a priori test \textit{infinitely} many test configurations, which depend on the $\bG_m$ action \textit{and} the power $m$ used in the embedding $|L^m|: X \to \bP^N$.  (For those familiar with GIT: this is like checking the Hilbert-Mumford weight for every possible embedding of $X$ into a higher and higher projective space.)  How can this possibly be reasonable?  
\end{remark}

We first make connections with singularities in algebraic geometry.  Although nothing about singularities explicitly appears in the definition of the Futaki invariant, asking that a variety is K-semistable has (surprising!) consequences on the singularities of $X$.  

\begin{theorem}[\cite{Odaka13}]\label{thm:klt}
If $-K_X$ is ample, then K-semistability of $X$ implies that $X$ has log terminal singularities.  
\end{theorem}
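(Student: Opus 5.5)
We argue by contraposition, following Odaka's strategy: if $X$ is not log terminal, we build a test configuration with $\Fut<0$. Two preliminary points. First, $\Fut$ is defined through $K_X$ and $L=-mK_X$, so we assume $K_X$ is $\bQ$-Cartier. Second, Theorem \ref{thm:specialtestconfig} is of no use here: a special test configuration already has klt central fiber, so it cannot detect bad singularities of $X$. We therefore work with the original, non-special test configurations of Definition \ref{def:stability} together with the Futaki invariant formula for $L=-mK_X$ given above.

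\textbf{Step 1 (log canonical modification).} Since $X$ is not klt, some divisor $E$ over $X$ has $a_X(E)\le -1$. Split into two cases: $X$ is not log canonical, or $X$ is log canonical but not klt.

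In the first case, pass to the log canonical modification $\pi:Y\to X$. This exists by MMP results (Odaka–Xu), and it satisfies $K_Y+E_Y=\pi^*K_X+\sum (a_i+1)E_i$ with $K_Y+E_Y$ $\pi$-ample, where $E_Y$ is the reduced exceptional divisor. The non-lc condition gives $a_i+1<0$ for some $i$.

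In the second case, use instead a dlt modification extracting a divisor $E$ with $a_X(E)=-1$.

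\textbf{Step 2 (the test configuration).} Let $\mathcal I\subset\calO_X$ be the ideal cut out by the exceptional locus. Form the deformation to the normal cone, namely the blow-up $\calX=\mathrm{Bl}_{\mathcal I+(t^k)}(X\times\bA^1)$, or more precisely a flag ideal adapted to $Y$. Polarize it by $\calL_\epsilon=p^*(-K_X)-\epsilon F$, where $F$ is the exceptional divisor and $0<\epsilon\ll1$. This is a legitimate (non-special) test configuration.

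\textbf{Step 3 (sign of $\Fut$).} Expand $\Fut(\calX,\calL_\epsilon)$ using
\[
\Fut=\frac{1}{2(-K_X)^n}\Bigl(\calL_\epsilon^{n}\cdot K_{\overline{\calX}/\bP^1}+\tfrac{n}{n+1}\calL_\epsilon^{n+1}\Bigr)
\]
as a power series in $\epsilon$. The constant term vanishes, because the test configuration is trivial at $\epsilon=0$. The leading nonzero term has order $\epsilon^{c}$, where $c$ is the codimension of the center of $F$.

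In the canonical-divisor part $K_{\overline{\calX}/\bP^1}$, the discrepancy of $F$ enters through $K_{\calX}=p^*K_{X\times\bA^1}+(a(F))F$. Here $a(F)$ is controlled by $a_X(E)+1$, which is $<0$ in the non-lc case.

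For the volume part $\calL_\epsilon^{n+1}$, observe that its leading term has order $\epsilon^{c+1}$, strictly higher. Hence the discrepancy term dominates, and $\Fut<0$ for small $\epsilon$.

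In the lc-but-not-klt case, $a+1=0$ and the leading term vanishes. The negative sign then comes from the next-order volume term $-\epsilon^{c+1}(\dots)F^{c+1}$, whose sign is fixed by the $\pi$-ampleness of $-F$.

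\textbf{Main obstacle.} The hardest part is Step 3: the intersection-theoretic computation of $\Fut$ on the blow-up. One must identify the leading term in $\epsilon$ and show its sign is governed by $a+1$ (respectively by $-F$ being relatively ample). I would first work it out in the model case where the non-klt center is an isolated point, so that $F$ lies over a point, and then reduce the general case to it by localizing the intersection numbers near the generic point of the center.
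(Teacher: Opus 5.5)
The survey gives no proof of this theorem; it only cites Odaka. Your outline follows Odaka's strategy. You use non-special test configurations, and your remark that special ones cannot detect non-klt singularities of $X$ is exactly right. You blow up a flag ideal on $X\times\bA^1$, polarize by $H-\epsilon F$ with $H=p^*(-K_X)$, and observe that $(H-\epsilon F)^n\cdot K_{\calX/X\times\bA^1}$ is $O(\epsilon^{c})$ while the other terms are $O(\epsilon^{c+1})$. The gap is in the one input that decides the sign: your claim that the discrepancy of $F$ is ``controlled by $a_X(E)+1$''. Suppose $\mathcal I\calO_Y=\calO_Y(-\sum e_iE_i)$ and $\mathcal J=\mathcal I+(t^k)$. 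The exceptional divisors $F_i$ of the normalized blow-up are the monomial valuations with $\ord_{F_i}(E_i\times\bA^1)=k/g_i$ and $\ord_{F_i}(t)=e_i/g_i$, where $g_i=\gcd(k,e_i)$. Hence
\[
A_{X\times\bA^1}(F_i)=\frac{k\,A_X(E_i)+e_i}{g_i},
\]
so the $t$-direction contributes $+e_i$. For the deformation to the normal cone ($k=1$) this gives $a(F_i)=a_X(E_i)+e_i$, which is positive once $e_i>-a_X(E_i)$. In the lc case it gives $a(F_i)=e_i/g_i-1\ge 0$, so unless $e_i\mid k$ the $\epsilon^c$ term is \emph{positive} and $\Fut>0$ for small $\epsilon$. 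Likewise, ``the ideal cut out by the exceptional locus'' is the wrong ideal. Its Rees valuations can have $A_X>0$, and these enter $K_{\calX/X\times\bA^1}$ with coefficient roughly $kA_X>0$.

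The fix is to choose the test configuration precisely. In the non-lc case take $\mathcal I=\pi_*\calO_Y(-mD)$ with $D=-\sum(a_i+1)E_i$. Since $-D$ is $\pi$-ample and exceptional, the negativity lemma gives $a_i<-1$ for \emph{every} $i$, not just some, and you need all of them. The Rees valuations of $\mathcal I$ are then exactly the $E_i$, and taking $k\gg0$ makes every $a(F_i)<0$. In the lc case take an ideal all of whose Rees valuations are lc places (e.g.\ $\pi_*\calO_Y(-mE_Y)$ for a dlt modification and divisible $m$), and take $k$ divisible by every $e_i$, so that $K_{\calX/X\times\bA^1}=0$ identically. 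Mere vanishing of the leading coefficient is not enough, since the next-order part of the discrepancy term is also $O(\epsilon^{c+1})$. Finally, at order $\epsilon^{c+1}$ the volume term is not alone: $(H-\epsilon F)^n\cdot p^*K_X$ in general contributes at that order with the opposite sign. When $K_{\calX/X\times\bA^1}=0$, write $f(\epsilon)=(H-\epsilon F)^{n+1}$. The bracketed expression in the formula for $\Fut$ is then exactly $\frac{1}{n+1}\bigl(\epsilon f'(\epsilon)-f(\epsilon)\bigr)$, with leading term $\frac{c}{c+1}\binom{n}{c}\epsilon^{c+1}H^{n-c}\cdot(-F)^{c+1}$. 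This is negative because $H^{n-c}\cdot(-F)^{c}\cdot F>0$ by relative ampleness of $-F$, and $c\ge 1$. With these choices your Step 3 goes through.
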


\subsection{K-stability via the $\alpha$-invariant}

In the rest of this section, we will primarily focus on \textit{other invariants} that capture K-(semi)stability.  These can be easier to check in practice. 

We first define the $\alpha$-invariant, introduced by Tian in \cite{Tian87}.  The original definition is analytic, but by Theorem A.3 of Demailly's appendix in \cite{CS08}, it coincides with what follows.

\begin{definition}[Tian]
Let $X$ be a $\mathbb{Q}$-Fano variety.  Tian's $\alpha$-invariant is 
\[ \alpha(X) = \inf_{0 \le D \sim_{\mathbb{Q}} - K_X} \lct(X,D) .\]
\end{definition}

\begin{example}
If $X = \bP^n$, because $-K_{\bP^n} = (n+1) H$, \[ \alpha(\bP^n) =\frac{1}{n+1}.\]
\end{example}

\begin{theorem}[\cite{Tian87}]\label{thm:alphainvariant}
Let $X$ be a $\mathbb{Q}$-Fano variety of dimension $n$.  If \[ \alpha(X) > (\ge) \frac{n}{n+1} \]
then $X$ is K-(semi) stable.  
\end{theorem}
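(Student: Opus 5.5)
By Theorem \ref{thm:specialtestconfig}, it suffices to show that $\Fut(\calX,\calL)\ge 0$ (respectively $>0$ unless the configuration is trivial) for every special test configuration $(\calX,-mK_{\calX})$. I will work with the compactification $\overline{\calX}\to\bP^1$, glued as in the remark above, and set $\overline{L}:=-K_{\overline{\calX}/\bP^1}$. By the remark for special test configurations, the sign of $\Fut$ is minus the sign of $\overline{L}^{\,n+1}$. So the goal is to prove
\[
\overline{L}^{\,n+1}\le 0 \quad\text{when } \alpha(X)\ge \tfrac{n}{n+1},
\]
with strict inequality when $\alpha(X)>\tfrac{n}{n+1}$ and the configuration is nontrivial.

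The plan is to convert the lct bound on $X$ into a positivity statement for a twisted divisor on $\overline{\calX}$, and then integrate it against powers of $\overline{L}$.

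\textbf{Step 1 (a valuation).} The central fiber $X_0$ is irreducible, since it is a klt Fano variety. Its generic point gives a $\bG_m$-invariant divisorial valuation $v$ on $X$, via restriction from $K(\calX)=K(X)(t)$. Normalize $v$ so that $v(t)=1$, i.e.\ take the fiber valuation $\ord_{X_0}$. Restricted to $K(X)$, $v$ is either trivial (the product case) or proportional to $c\,\ord_F$ for some prime divisor $F$ over $X$.

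\textbf{Step 2 (relating the Futaki invariant to $v$).} I expect to use the standard formula, obtained by expanding $\overline{L}^{\,n+1}$ through the filtration of $R=\bigoplus_k H^0(-kK_X)$ induced by $v$:
\[
\Fut(\calX,\calL)=A_X(v)-S(v), \qquad S(v)=\frac{1}{(-K_X)^n}\int_0^\infty \vol(-K_X-tv)\,dt .
\]
Here $A_X(v)=c\,A_X(F)$ is the log discrepancy. The two pieces come from different places. The $K_{\overline{\calX}/\bP^1}$ term contributes $A_X(v)$, because special test configurations have $X_0$ reduced and $K$ restricts to $K_{X_0}$ by adjunction. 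The weight term contributes $S(v)$, through the asymptotic weight of $\bG_m$ on $H^0(X_0,-kK_{X_0})$, which is the sum of the jumps of the filtration. \textbf{This computation is the main obstacle.} The cleanest route is Riemann--Roch on $\overline{\calX}$, writing $\overline{L}=\pi^*(-K_X)-(\text{twist along }X_0)$ on a common resolution with $X\times\bP^1$.

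\textbf{Step 3 (the $\alpha$-invariant bound).} Two estimates then finish the argument.

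First, let $T(v)=\sup\{v(D): 0\le D\sim_{\bQ}-K_X\}$. Since $\lct(X,D)\le A_X(v)/v(D)$, the definition of $\alpha$ gives $T(v)\le A_X(v)/\alpha(X)$.

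Second, volumes satisfy concavity: by Brunn--Minkowski, $\vol(-K_X-tv)^{1/n}$ is concave in $t$. Together with $\vol=0$ at $t=T$, this gives $\vol(-K_X-tv)\ge (1-t/T)^n(-K_X)^n$. The same concavity argument, applied to $t\mapsto \vol(-K_X-tv)$ as a decreasing function vanishing at $T$, gives the upper bound $S(v)\le \tfrac{n}{n+1}T(v)$.

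Combining the two estimates,
\[
S(v)\le \frac{n}{n+1}\,\frac{A_X(v)}{\alpha(X)}\le A_X(v)
\]
when $\alpha(X)\ge \tfrac{n}{n+1}$. Hence $\Fut\ge 0$. If instead $\alpha(X)>\tfrac{n}{n+1}$, then for nontrivial $v$ we have $A_X(v)>0$ (because $X$ is klt), so $\Fut>0$. In the trivial case $\calX\cong X\times\bA^1$ with trivial action, the Futaki invariant is $0$. This yields K-stability, and K-semistability in the non-strict case.
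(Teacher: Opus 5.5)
The paper does not prove this statement; it only quotes it from Tian. Your argument therefore has to stand on its own. Its architecture is the standard valuative one: $T(v)\le A_X(v)/\alpha(X)$ combined with $S(v)\le\frac{n}{n+1}T(v)$. You prove the first inequality correctly.

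The gap is the second inequality, which is exactly where the constant $\frac{n}{n+1}$ comes from. Concavity of $f(t)=\bigl(\vol(-K_X-tv)/(-K_X)^n\bigr)^{1/n}$ together with $f(T)=0$ gives the \emph{lower} bound $\vol(-K_X-tv)\ge(1-t/T)^n(-K_X)^n$, i.e.\ $S(v)\ge\frac{1}{n+1}T(v)$. That is the opposite of what you need. Nor can concavity, monotonicity and vanishing at $T$ give anything better than $S\le T$: the profile $f(t)=\min\{1,(T-t)/\epsilon\}$ has all three properties, yet $\frac{1}{T}\int_0^T f^n\,dt\to 1$ as $\epsilon\to 0$. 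So ``the same concavity argument'' applied to $\vol(-K_X-tv)$ does not yield $S(v)\le\frac{n}{n+1}T(v)$. What is needed is an \emph{upper} bound $\vol(-K_X-tv)\le\bigl(1-(t/T)^n\bigr)(-K_X)^n$, and that requires a different input.

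Here is one way to supply it.
\begin{itemize}
\item Write $v=c\,\ord_F$ and choose a log resolution $\mu\colon Y\to X$ on which $F$ is a smooth prime divisor.
\item Let $\Delta\subset\mathbb{R}^n$ be the Okounkov body of $\mu^*(-K_X)$ for an admissible flag beginning with $F$. Then $\vol(\mu^*(-K_X)-sF)=n!\,\vol(\Delta\cap\{x_1\ge s\})$, $\min_\Delta x_1=0$, and $\max_\Delta x_1=T(\ord_F)$.
\item Because $x_1$ is \emph{linear} on $\Delta$, the lower pieces $\Delta_{\le s}=\Delta\cap\{x_1\le s\}$ satisfy $(1-\lambda)\Delta_{\le s_0}+\lambda\Delta_{\le s_1}\subset\Delta_{\le(1-\lambda)s_0+\lambda s_1}$. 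Brunn--Minkowski then makes $\vol(\Delta_{\le s})^{1/n}$ concave.
\item Since $\Delta_{\le 0}\neq\emptyset$, this gives $(-K_X)^n-\vol(-K_X-s\,\ord_F)\ge(s/T)^n(-K_X)^n$. Integrating yields $S\le\frac{n}{n+1}T$; this is the barycenter bound used by Fujita and Blum--Jonsson.
\end{itemize}

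Two smaller remarks.
\begin{itemize}
\item With this bound in hand, your Step 2 (which you flag as the main obstacle) is unnecessary. For every prime divisor $E$ over $X$ you get $\beta_X(E)\ge A_X(E)\bigl(1-\frac{n}{(n+1)\alpha(X)}\bigr)$. This is $\ge 0$, and $>0$ in the strict case because $A_X(E)>0$ for klt $X$. Theorem~\ref{thm:valcriteria} then concludes directly.
\item In Step 1, $v$ is trivial exactly for the \emph{trivial} test configuration, not for product ones. A product configuration with nontrivial $\bG_m$-action induces a nontrivial (weight) valuation. This is harmless for your argument, but it should be stated correctly.
\end{itemize}
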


This is not an if-and-only-if, but can be readily computable, and we can use the $\alpha$ invariant to check K-stability of Fano varieties.

\begin{example}
    If $X = \bP^1$, $\alpha(X) = \frac{1}{2}$ so $\bP^1$ is K-semistable.  But, for $n > 1$, the $\alpha$-invariant tells us nothing.  We will see later that a refinement of this criterion can be used to show that $\bP^n$ is always K-semistable.
\end{example}

Let's use this to understand the stability of some del Pezzo surfaces.  

\begin{definition}
The \textbf{degree} of a del Pezzo surface $X$ is $d = (-K_X)^2$.  For $X = \bP^2$, $ d = 9$.  For $X = \bP^1 \times \bP^1$, $d = 8$.
\end{definition}

\begin{example}
Let $X$ be a del Pezzo surface of degree 1 (so $X$ is the blow up of $\bP^2$ at 8 points).  We will directly compute $\alpha(X)$ and show that $X$ is K-stable.  

Consider the linear system $|-K_X|$.  In terms of curves on $\bP^2$, 
\[ -K_X = \pi^*(-K_{\bP^2}) - \sum_{i = 1}^8 E_i \] so the curves in $|-K_X|$ are (strict transforms of) cubic curves in $\bP^2$ that pass through all 8 points that were blown up.  Suppose that $D \sim_\bQ - K_X$.  If $\Supp D \notin |-K_X|$, pick $x \in D$ and choose a curve $C \in |-K_X|$ such that $x \in C$.  Because $D \sim_\bQ -K_X$ and $C \sim_\bQ - K_X$, $D \cdot C = (-K_X)^2 = 1$, so the multiplicity of $D$ at every point is at most $1$.  This implies that $(X,D)$ is log canonical, so $\lct(X,D) = 1$. 

Then, to finish computing $\alpha(X)$, we just need to compute the log canonical threshold of curves $D \in |-K_X|$.  Each such $D$ is a cubic plane plane curve vanishing at the 8 points we blew up (so, in particular, $D$ must be reduced and irreducible, because no three of the points blown up were co-linear, and no 6 were on a conic), and such curves are either: 
    \begin{align*}
        &\text{smooth} \\
        &\text{nodal} \\
        &\text{cuspidal}
    \end{align*}
and the log canonical threshold of a pair $(X,D)$ with $D$ as above is: 
    \begin{align*}
        &\text{smooth}: \lct(X,D) = 1 \\
        &\text{nodal}: \lct(X,D) = 1 \\
        &\text{cuspidal}: \lct(X,D) = \tfrac{5}{6}. 
    \end{align*}
Therefore, in each case, we see that $\alpha(X) = \min \lct(X,D) \ge \frac{5}{6} > \frac{2}{3}$ so by Theorem \ref{thm:alphainvariant}, $X$ is K-stable.  
\end{example}

\begin{remark}
More generally, Cheltsov \cite{Chel08} has shown that $\alpha(X) \ge \frac{2}{3}$ for $X$ a del Pezzo surface of degree $\le 4$, hence any such $X$ is K-semistable. 
\end{remark}

There are many refinements of Tian's criteria.  None of these are if-and-only-ifs, but at least they make it possible to check K-stability in many cases. 

\begin{theorem}[\cite{Fuj19a}]\label{thm:fujitarefinement}
If $X$ is a surface, or smooth of dimension $\ge 3$, and 
\[ \alpha(X) \ge \frac{n}{n+1}, \] then $X$ is K-stable. 
\end{theorem}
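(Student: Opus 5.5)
We argue through valuations, using the Fujita--Li criterion via the $\beta$-invariant. This criterion has not yet appeared in the survey, but it is the natural tool here. For a prime divisor $E$ over $X$, set
\[ \beta(E) = A_X(E) - S(E), \qquad S(E) = \frac{1}{\vol(-K_X)}\int_0^\infty \vol(-K_X - tE)\,dt, \]
where $A_X(E) = a_X(E)+1$ is the log discrepancy. Fujita and Li showed that $X$ is K-semistable exactly when $\beta(E)\ge 0$ for all such $E$. For K-stability we instead need $\beta(E)>0$ for all $E$; it suffices to check this on \emph{dreamy} divisors, namely those coming from special test configurations as in Theorem \ref{thm:specialtestconfig}.

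\textbf{Step 1: the $\alpha$-invariant bounds every divisor.} Let $T(E)$ be the pseudo-effective threshold of $E$. For any rational $t<T(E)$ there is an effective $D\sim_\bQ -K_X$ with $\ord_E(D)\ge t$. Since $\lct(X,D)\ge \alpha(X)$, we get
\[ A_X(E)\ \ge\ \alpha(X)\, T(E). \]

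\textbf{Step 2: compare $S(E)$ with $T(E)$.} Because $t\mapsto \vol(-K_X-tE)^{1/n}$ is concave (log-concavity of volume), a standard Brunn--Minkowski computation gives
\[ S(E)\ \le\ \frac{n}{n+1}\,T(E). \]
Equality holds only when $\vol(-K_X-tE)^{1/n}$ is linear in $t$, i.e.\ the volume function is that of a cone. Putting Steps 1 and 2 together with $\alpha(X)\ge n/(n+1)$ gives $\beta(E)\ge 0$, so $X$ is K-semistable. This step also recovers Theorem \ref{thm:alphainvariant}.

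\textbf{Step 3: strict inequality.} Suppose some $E$ has $\beta(E)=0$. Then both inequalities must be equalities:
\[ \alpha(X)=\frac{n}{n+1}, \qquad A_X(E)=\alpha(X)T(E), \qquad S(E)=\frac{n}{n+1}T(E). \]
The volume equality forces $\vol(-K_X-tE)=\vol(-K_X)\,(1-t/T)^n$. This says the filtration induced by $E$ is as degenerate as possible: the graded pieces concentrate so that $X$ degenerates to a cone. In the surface case, I would run a Zariski decomposition of $-K_X-tE$, which shows the negative part vanishes for all $t$. Hence $-K_X-tE$ is nef for $t<T$, and so $(-K_X)\cdot E\cdot(\text{stuff})$ computations force $(-K_X-TE)^2=0$ with $-K_X-TE$ nef. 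For smooth $X$ of dimension $\ge3$, I would use that $E$ must then be a divisor computing $\alpha$ with $\vol$ of cone type, and appeal to the cone construction to get a contradiction with smoothness: a smooth Fano cannot be a cone degeneration of this type unless $X\cong \bP^n$ with the divisor a hyperplane, and there $\alpha=1/(n+1)<n/(n+1)$.

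\textbf{Main obstacle.} The hard part is Step 3, the equality case. One must show that equality in the Brunn--Minkowski bound, combined with $E$ computing the $\lct$, forces $X$ to be a cone over $E$ (or a lower-dimensional base). That contradicts the hypotheses that $X$ is a surface with klt singularities or is smooth. I would first try the surface case via explicit Zariski decomposition, then attempt the smooth higher-dimensional case by restricting to a general curve through the center of $E$ and bounding multiplicities, following Fujita.
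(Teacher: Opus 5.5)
Your overall strategy is the right one, and Step 1 is fine. The strategy is the valuative criterion, $A_X(E)\ge\alpha(X)T(E)$, $S(E)\le\frac{n}{n+1}T(E)$, then an analysis of the equality case. The survey does not prove this statement; it only cites Fujita. There is a genuine gap: Step 2 is justified incorrectly, and this derails Step 3.

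\textbf{Step 2 proves the wrong bound.} Concavity of $t\mapsto\vol(-K_X-tE)^{1/n}$ gives $\vol(-K_X-tE)\ge\vol(-K_X)(1-t/T)^n$. That is the \emph{lower} bound $S(E)\ge T(E)/(n+1)$. It cannot give the upper bound: a concave $\vol^{1/n}$ may stay near $\vol(-K_X)^{1/n}$ until $t$ is close to $T$, which makes $S$ close to $T$. The bound $S(E)\le\frac{n}{n+1}T(E)$ is true, but it needs a different input. One option is concavity of the restricted volume $t\mapsto\vol_{Y|E}(-K_X-tE)^{1/(n-1)}$ on a model $Y$ extracting $E$. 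Equivalently, apply Brunn--Minkowski to the slices $\{\nu_1=t\}$ of an Okounkov body built from a flag beginning with $E$; this body meets $\{\nu_1=0\}$. Either way you get $\vol(-K_X-tE)\le\vol(-K_X)\bigl(1-(t/T)^n\bigr)$.

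\textbf{Your equality case is inverted.} The profile $\vol(-K_X)(1-t/T)^n$ has $S=T/(n+1)$. It therefore gives $\beta(E)\ge\frac{n-1}{n+1}T>0$ and never occurs when $\beta(E)=0$. The true extremal profile is $\vol(-K_X)\bigl(1-(t/T)^n\bigr)$. That is the profile of the blow-up of a point of $\bP^n$, not of a hyperplane. So Step 3, which is in any case only a plan, analyzes a ``cone'' situation that cannot arise and never treats the real one.

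\textbf{What the equality analysis needs.} If $\beta(E)=0$, then $\alpha(X)=\frac{n}{n+1}$, $A_X(E)=\frac{n}{n+1}T$, and $\vol(-K_X-tE)=V\bigl(1-(t/T)^n\bigr)$ with $V=(-K_X)^n$.
\begin{itemize}
\item The loss $V-\vol(-K_X-tE)$ is then of order $t^n$ as $t\to 0$. This forces the center of $E$ to be a closed point $p$, since a center of dimension $d>0$ costs order $t^{n-d}$.
\item For small $t$, the map $H^0(-mK_X)\to\calO_{X,p}/\mathfrak{a}_{mt}$ is surjective, because jets of order $\le\epsilon m$ are separated. So the inequality in the proof of Theorem~\ref{thm:volume1} becomes an equality, $\vol(-K_X-tE)=V-\vol(\ord_E)\,t^n$.
\item Comparing the two profiles gives $V=T^n\vol(\ord_E)$. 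Hence $\widehat{\vol}(\ord_E)=A_X(E)^n\vol(\ord_E)=\bigl(\frac{n}{n+1}\bigr)^nV$.
\item If $p$ is a smooth point, then $\widehat{\vol}(\ord_E)\ge n^n$ by Property~\ref{propsofvol}(1), so $V\ge(n+1)^n$.
\item Since $X$ is K-semistable by Steps 1--2, Theorem~\ref{thm:volume1} forces $X\cong\bP^n$. This contradicts $\alpha(\bP^n)=\frac{1}{n+1}<\frac{n}{n+1}$ for $n\ge 2$.
\end{itemize}
This settles every smooth $X$ with $n\ge 2$.

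For a singular log del Pezzo surface, $p$ may be a quotient singularity $\bA^2/G$. There the same computation only gives $V=9/|G|$, that is, equality in Theorem~\ref{thm:localtoglobal} with $E$ computing $\widehat{\vol}(p,X)$. Excluding that case needs a further argument, which your proposal does not contain.
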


\begin{corollary}\label{cor:deg4}
All del Pezzo surfaces of degree $\le 4$ are K-stable. 
\end{corollary}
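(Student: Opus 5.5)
The corollary should follow directly from Theorem \ref{thm:fujitarefinement} together with Cheltsov's bound recorded in the preceding remark. A del Pezzo surface $X$ is a smooth surface, so $n = 2$, and Theorem \ref{thm:fujitarefinement} applies. That theorem says $X$ is K-stable as soon as $\alpha(X) \ge \frac{n}{n+1} = \frac{2}{3}$. So the whole argument reduces to verifying $\alpha(X) \ge \frac{2}{3}$ for every smooth del Pezzo surface of degree $d = (-K_X)^2 \le 4$.

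For the inequality I would cite \cite{Chel08}, as in the remark above. To make the corollary more self-contained, at least in low degree, I would also sketch why the bound holds, following the degree $1$ example. Take an effective $D \sim_\bQ -K_X$ and a point $x$. First, if $x$ lies on a curve $C \in |-K_X|$ that is not a component of $D$, then
\[ \mathrm{mult}_x D \le D \cdot C = d. \]
For $d = 1$ this gives multiplicity at most $1$, hence log canonicity away from the anticanonical curves themselves. For $d = 2, 3, 4$ one instead needs a finer argument. The standard one is an inversion of adjunction and convexity argument: one shows that if $(X, \frac{2}{3}D)$ were not log canonical at $x$, then one may replace $D$ by a suitable member of $|-K_X|$, or a combination involving it, that is still non-lc at $x$. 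This reduces the problem to computing log canonical thresholds of actual anticanonical curves, which are hyperplane sections of $X$ in its anticanonical embedding (for $d = 3, 4$) or preimages of lines under the double cover $X \to \bP^2$ (for $d = 2$).

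The remaining step is a finite case check of those curves. The worst singularities that can occur are the following:
\begin{itemize}
\item a cusp, with $\lct = \frac{5}{6}$;
\item a tacnode, for example a conic union a tangent line, with $\lct = \frac{3}{4}$;
\item three concurrent lines (Eckardt points on a cubic), with $\lct = \frac{2}{3}$;
\item two conics tangent at a point in degree $4$, with $\lct = \frac{3}{4}$.
\end{itemize}
All of these have $\lct \ge \frac{2}{3}$, which gives $\alpha(X) \ge \frac{2}{3}$. Note that equality really occurs, for instance for cubic surfaces with Eckardt points. This is exactly why the non-strict inequality in Theorem \ref{thm:fujitarefinement}, rather than Theorem \ref{thm:alphainvariant}, is needed to get K-\emph{stability}.

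The main obstacle is the reduction from an arbitrary $\bQ$-divisor $D$ to actual anticanonical curves, i.e. Cheltsov's theorem itself. The enumeration of singular anticanonical curves is routine. Since the paper states that bound, I would simply invoke it and keep the proof to two lines: Cheltsov's bound, then Theorem \ref{thm:fujitarefinement} with $n = 2$.
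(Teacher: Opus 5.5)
Your proposal is correct and matches the paper's argument: the corollary follows by combining Cheltsov's bound $\alpha(X)\ge\frac{2}{3}$ for degree $\le 4$ with Theorem \ref{thm:fujitarefinement} at $n=2$. You also rightly note that because the bound can be an equality, Fujita's refinement is needed rather than Theorem \ref{thm:alphainvariant}. One small slip in your optional sketch does not affect the conclusion: in degree $4$ the value $\frac{2}{3}$ is always attained, by a hyperplane section made of two lines and a conic through a common point (an ordinary triple point), so two tangent conics are not the worst case there.
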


\begin{example}
If $X$ is a smooth hypersurface in $\bP^{n+1}$ of degree $n+1$, then Cheltsov and Park \cite{CP02} have shown $\alpha(X) \ge \frac{n}{n+1}$.  Therefore, by Theorem \ref{thm:fujitarefinement}, all such $X$ are K-stable.  
\end{example}

What about $\bP^n$?  Surely we should be able to determine if it is semistable or not.  We use a $G$-invariant version of the $\alpha$-invariant. 

\begin{definition}
Let $X$ be a Fano variety with a group action by an algebraic group $G$.  Define
\[ \alpha_G(X) = \inf_{0 \le D \sim_{\mathbb{Q}} - K_X, D\text{ is } G\text{-invariant }} \lct(X,D) .\]
\end{definition}

Proved in increasing levels of generality by \cite{DS,LX,LZ,Zhuangequivariant}, we have the following.

\begin{theorem}
Let $X$ be a Fano variety with a group action by an algebraic group $G$.  
    \begin{enumerate}
        \item If \[ \alpha_G(X) \ge \frac{n}{n+1} ,\]
then $X$ is K-semistable. 
        \item If $G$ is reductive and \[ \alpha_G(X) > \frac{n}{n+1}, \]
then $X$ is K-polystable. 
    \end{enumerate}
\end{theorem}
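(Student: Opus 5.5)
Since we want a criterion in terms of log canonical thresholds, the natural route is through valuations: express the Futaki invariant of a special test configuration as a valuative quantity and compare it with $\lct$'s of invariant divisors. By Theorem \ref{thm:specialtestconfig}, it suffices to consider special test configurations $(\calX,\calL)$. For (1), the first step is a reduction to $G$-equivariant special test configurations, i.e.\ ones on which $G$ acts compatibly and commuting with the $\bG_m$-action. For connected $G$ this should come from the fact that a destabilizing special degeneration can be chosen canonically: the minimizer of the (normalized) Futaki invariant, or equivalently the valuation computing the stability threshold, is unique. Uniqueness forces $G$-invariance, since $G$ permutes the special test configurations and preserves $\Fut$.

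Next, to a special test configuration one associates the divisorial valuation $v=\ord_{X_0}$ restricted to $K(X)$, which is $c\cdot\ord_E$ for a prime divisor $E$ over $X$. The key identity is
\[ \Fut(\calX,\calL)=A_X(E)-S_X(E), \qquad S_X(E)=\frac{1}{(-K_X)^n}\int_0^\infty \vol(-K_X-tE)\,dt, \]
up to a positive normalizing constant. This follows from the formula in the Remark after Theorem \ref{thm:specialtestconfig}, by computing $(-K_{\overline{\calX}/\bP^1})^{n+1}$ through the filtration of $R(X,-K_X)$ induced by $v$. When the test configuration is $G$-equivariant, $E$ is a $G$-invariant divisor over $X$.

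The final step is the standard bound $S_X(E)\le \frac{n}{n+1}T_X(E)$, where $T_X(E)=\sup\{t:\vol(-K_X-tE)>0\}$ is the pseudoeffective threshold. This comes from concavity of $\vol^{1/n}$ along the ray. Given $t<T_X(E)$, choose $D\sim_\bQ -K_X$ with $\ord_E D\ge t$. Averaging over $G$ by taking $G$-invariant members of the graded pieces, which form nonempty linear systems of $G$-invariant sections since $E$ is $G$-invariant, we may take $D$ to be $G$-invariant. Then
\[ \alpha_G(X)\le \lct(X,D)\le \frac{A_X(E)}{\ord_E D}\le \frac{A_X(E)}{t}, \]
so $A_X(E)\ge \alpha_G T_X(E)\ge \frac{n+1}{n}\alpha_G\, S_X(E)\ge S_X(E)$, and hence $\Fut\ge 0$, which gives (1).

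For (2), strict inequality gives $A_X(E)>S_X(E)$ for every nontrivial $G$-invariant $E$. Polystability then requires showing that any special test configuration with $\Fut=0$ is a product. I would use reductivity of $G$, following the structure theory for $\Aut$ of K-semistable Fanos: if $\Fut=0$ on a non-product degeneration, the set of such degenerations (equivalently, of minimizing valuations) is acted on by $G$, and reductivity, via a fixed-point or Kempf-type argument on the associated spherical building, produces a $G$-invariant one. Its valuation then contradicts the strict inequality, unless $v$ is induced by a $\bG_m$ in $\Aut(X)$ centralized by $G$, which corresponds exactly to a product test configuration. The main obstacle is the equivariant reduction used in both parts: the existence of a $G$-equivariant destabilizer, and for non-connected or non-reductive $G$ in (1) the averaging argument. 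I expect to have to appeal to the finite generation and uniqueness of minimizers developed in the cited works rather than prove these directly.
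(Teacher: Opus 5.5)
The paper does not prove this theorem; it only cites \cite{DS,LX,LZ,Zhuangequivariant}. Your skeleton is the one used in that literature: produce a $G$-invariant valuation $v$ with $\beta_X(v)<0$, then combine $A_X(v)\ge\alpha_G(X)\,T_X(v)$ with $S_X(v)\le\frac{n}{n+1}T_X(v)$. However, the step where you make $D$ $G$-invariant fails for positive-dimensional $G$, and the failure is not technical. For finite $G$ the average $\frac{1}{|G|}\sum_g g^*D$ is fine. For positive-dimensional $G$, the $G$-stable subspace $(\calF_E V_m)_t\subseteq H^0(X,-mK_X)$ need not contain the section of any $G$-invariant divisor, since a Reynolds-type projection onto invariants can be zero.

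Indeed, with $\alpha_G$ defined through $G$-invariant divisors as in this paper, the theorem is false. Let $G\cong\GL_2$ act on $X=\mathbb{F}_1=\mathrm{Bl}_{[0:0:1]}\bP^2$ through $\mathrm{diag}(A,1)$. The orbits are $E$, the strict transform $\tilde L\sim H$ of $\{z=0\}$, and an open orbit. So every $G$-invariant effective $\bQ$-divisor is $aE+b\tilde L$ with $a,b\ge 0$, whereas $aE+b\tilde L\equiv -K_X=3H-E$ forces $a=-1$. Thus $\alpha_G(X)=+\infty$ (with the convention of the $\bP^n$ example), although $G$ is reductive and $\mathbb{F}_1$ is K-unstable.

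The fix, which is the algebraic counterpart of Tian's definition via $G$-invariant plurisubharmonic functions, is to take the infimum of $\lct(X;\frac1m\mathcal{M})$ over $G$-invariant linear systems $\mathcal{M}\subseteq|-mK_X|$. This agrees with the divisor version for finite $G$. With it your last step needs no averaging at all, since $\mathcal{M}=(\calF_E V_m)_t$ is itself $G$-invariant and $\lct(X;\frac1m\mathcal{M})\le A_X(E)/t$. Separately, concavity of $\vol^{1/n}$ only gives the lower bound $S_X(E)\ge\frac{1}{n+1}T_X(E)$. The upper bound $S_X(E)\le\frac{n}{n+1}T_X(E)$ comes from Brunn--Minkowski on slices of an Okounkov body, i.e.\ concavity of $\vol_{Y|E}(\mu^*(-K_X)-tE)^{1/(n-1)}$, as in Fujita and Blum--Jonsson.

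The second gap is the equivariant reduction. The valuation computing $\delta(X)$ is not unique in general. On $\mathbb{F}_1\times\mathbb{F}_1$, the pullbacks of the $(-1)$-curve from the two factors, and the whole segment of quasi-monomial valuations joining them, all compute $\delta=6/7$. Also, $\delta$-minimizers are not the same as minimizers of a normalized Futaki invariant. What is unique is the minimizer $v_0$ of the $H$-invariant (Han--Li, Blum--Liu--Xu--Zhuang). It is therefore invariant under any $G$, and Jensen's inequality gives $\beta_X(v_0)\le H(v_0)<0$ when $X$ is K-unstable. But $v_0$ is in general only quasi-monomial, so you would have to run the argument above for valuations rather than for a special test configuration with $v=c\,\ord_E$; both inequalities still hold for quasi-monomial $v$.

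Zhuang's proof \cite{Zhuangequivariant} instead extracts a $G$-invariant destabilizing divisor from the canonical, hence $G$-invariant, optimal destabilizing center; the finite-group case is due to Liu--Zhu. Your outline for (2) points in the right direction: use reductivity of $G$, and of the automorphism group of the K-polystable degeneration, in a Luna/Kempf-type argument to make that degeneration $G$-equivariant, then contradict $\beta_X>0$ on nontrivial $G$-invariant valuations. As written, though, it is a plan rather than a proof.
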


\begin{example}
Let $X = \bP^n$ and $G = \mathrm{PGL}(n+1)$.  There are no $G$-invariant divisors, hence $\alpha_G(X) = \infty$, so $\bP^n$ is K-polystable. 
\end{example}

\subsection{K-stability via the $\beta$ and $\delta$ invariants}

Thus far, we have introduced the test configuration definition for K-stability and the $\alpha$-invariant (and $\alpha_G$) which we could use as a test to determine K-stability.  The $\alpha$-invariant had the advantage that it is relatively computable, however, it has a distinct disadvantage of not being an if-and-only-if statement.  Can we get such a statement?  Because Theorem \ref{thm:klt} ties K-semistability to the singularities and birational geometry of $X$, we may hope for a definition of K-(semi/poly)stability in more birational geometric terms. In fact, we can connect the Futaki invariant to Fujita and Li's $\beta$-invariant (or $\delta$ invariant) .  For the reader's convenience, note that in \cite{Xu23}, the $\beta$-invariant is called the Fujita-Li invariant. 

\begin{definition}
Let $X$ be a $\mathbb{Q}$-Fano variety and $E$ a prime divisor over $X$.  Let $\mu: Y \to X$ be any morphism such that $E \subset Y$.

Let $A_X(E)$ be the log discrepancy of the divisor $E$, defined as \[ A_X(E) = 1 + \ord_E(K_Z - f^*K_X) = 1 + a_X(E).\]

Define $S_X(E)$ to be 
\[ S_X(E) = \frac{1}{(-K_X)^n} \int_0^\infty \vol(\mu^*(-K_X) - tE) dt .\]  This does not depend on choice of $\mu$ and $Y$, so we often write \[ S_X(E) = \frac{1}{(-K_X)^n} \int_0^\infty \vol(-K_X- tE) dt .\]

The $\beta$-invariant of the divisor $E$ is 
\[ \beta_X(E) = A_X(E) - S_X(E) .\]  The $\delta$-invariant of $E$ is \[\delta_X{E} = \frac{A_X(E)}{S_X(E)}. \]
\end{definition}

In the integral $S$, we must compute $\vol(D)$ for $D = -K_X - tE$.  What follows are some notions related to volumes of divisors. 

\begin{definition}
    The \textit{volume} a divisor $D$ on a normal variety $X$ of dimension $n$ is 
    \[ \vol(D) = \lim_{m\to \infty} \frac{h^0(X,mD)}{m^n/n!}.\]
\end{definition}

\begin{definition}
A divisor $D$ on a normal variety $X$ of dimension $n$ is \textit{big} if one of the following equivalent definitions hold: 
    \begin{enumerate}
        \item For $m \gg 0$, the map given by the linear system $|mD|: X \dashrightarrow \bP^N$ is birational onto its image.
        \item For $m \gg 0$, there exists a constant $c>0$ such that $h^0(X,mD)>cm^n$.
        \item $\vol(D) > 0$.
    \end{enumerate}
\end{definition}

How do we compute volumes?  If $D$ is a divisor on a normal variety $X$ of dimension $n$, we offer the following computational remarks: 

\begin{enumerate}
    \item If $D$ is nef, $\vol(D) = D^n$. 
    
    \item If $D$ is big, by definition $\vol(D) > 0$, and we can at least bound the volume of $D$ from below by considering the image of the linear system $|mD|: X \dashrightarrow \bP^N$.  Because $D$ is big, the image is a variety $Y$ birational to $X$.  If we assume this a morphism $f: X \to Y$, some divisors in $X$ may be contracted.  If we write $D = f^*\calO(1) + N$ for some effective divisor $N$ supported on the contracted locus, then $\vol(f^*(\calO(1)) \le \vol(D)$ (because $h^0(f^*\calO(1)) \subset h^0(D)$).  And, $f^*\calO(1)$ is nef, so its volume is just $\calO(1)^n$.  Therefore, we may compute a lower bound for the volume of $D$ by the volume of $\calO(1)$.  In practice, we determine $N$ by considering MMP-like birational modifications of $X$.  If the divisor $D$ is trivial or negative on some effective curve in $X$, then we contract the class of this curve.  If this contraction is divisorial, $N$ will be supported on the divisor, and if it is a small contraction, we flip or flop the class of the curve to a new variety $X \dashrightarrow X^+$ and pullback everything to a common partial resolution $\widehat{X}$.  The divisor $N$ will be supported on the exceptional divisors of $\widehat{X} \to X$.

    For surfaces, this is called a \textit{Zariski decomposition}: we can always write a big divisor $D$ on a surface $X$ as $D = P + N$, where $P$ is nef, $N$ is negative (meaning it has negative definite intersection matrix and is contractible, or is $0$), and $P\cdot N = 0$.  In this case, we have the equality $\vol(D) = \vol(P) = P^2$.  To relate this to the birational modifications above, because $N$ is negative, it corresponds to a contractible curve in the surface, and $P$ is the pushforward of the divisor $D$ on the contraction.
\end{enumerate}

We will see some examples of computing volumes in the exercises.

\begin{remark}
In the definition of $S_X(E)$, we need to compute an improper integral.  However, $\vol(-\mu^*K_X - tE) > 0$ if and only if $-\mu^*K_X - tE$ is big.  In terms of divisors on $Y$, the closure of the big cone of divisors is the pseudo-effective cone, so the volume is only non-zero if $t \in [0, \tau ]$ where $\tau$ is the pseudo-effective threshold.  This is finite; at some point we have subtracted `too much' $E$ and the divisor is no longer pseudo-effective.  Therefore, we could re-write 
\[ S_X(E) = \frac{1}{(-K_X)^n} \int_0^\tau \vol(-K_X- tE) dt .\]
\end{remark}

With this definition, we can relate the K-(semi/poly)stability of $X$ intrinsically to the birational geometry of $X$.  The following theorem is called the \textbf{valuative criterion} for K-(semi/poly)stability.  Initially given by Fujita and Li, there are several important contributions necessary connecting uniform K-stability and K-stability by the other cited authors.

\begin{theorem}[\cite{FuValuative,LiValuative,FO16, BJ17,LXZ}]\label{thm:valcriteria}
A variety $X$ is K-semistable (resp. stable) if and only if $\beta_X(E) \ge 0$ (resp. $> 0$) for all prime divisors $E$ over $X$ (equivalently, $\delta_X(E) \ge 1$ (resp. $> 1$)).
\end{theorem}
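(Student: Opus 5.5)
The plan is to pass between divisorial valuations over $X$ and special test configurations, using Theorem \ref{thm:specialtestconfig} so that only special test configurations need to be considered.

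\textbf{Step 1: from test configurations to valuations.} Let $(\calX,\calL)$ be a special test configuration. Its central fiber $X_0$ is an integral divisor on $\calX$, so $\ord_{X_0}$ restricted to the function field $K(X)$ of $X$ defines a valuation, which I will call $v$. This valuation has the form $c\cdot\ord_E$ for some prime divisor $E$ over $X$. I will then compute $\Fut(\calX,\calL)$ via the intersection formula in the remark following Definition \ref{def:stability}. To do this, I will resolve the birational map $\overline{\calX}\dashrightarrow X\times\bP^1$ and compare $-K_{\overline{\calX}/\bP^1}$ with the pullback of $-K_{X\times\bP^1/\bP^1}$. The discrepancy of $X_0$ along this comparison gives $A_X(E)$ up to the factor $c$. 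The term $(-K_{\overline{\calX}/\bP^1})^{n+1}$ can be expressed through the filtration of $R=\bigoplus_k H^0(-kK_X)$ induced by $v$, which produces $\int_0^\infty\vol(-K_X-tE)\,dt$. The expected identity is
\[ \Fut(\calX,\calL) = c\,\beta_X(E) = c\bigl(A_X(E)-S_X(E)\bigr), \]
up to a positive normalizing constant. This is Fujita's and Li's computation. With it, $\beta\ge0$ (resp. $>0$) for all $E$ implies $\Fut\ge0$ (resp. $>0$ for nontrivial configurations, since a nontrivial special configuration gives $c>0$).

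\textbf{Step 2: the converse.} Suppose $X$ is K-semistable and some $E$ has $\beta_X(E)<0$. I need to produce a test configuration with negative Futaki invariant. The natural first try is the filtration $\mathcal{F}^\lambda R_k=\{s:\ord_E(s)\ge\lambda\}$, whose Rees algebra would give a degeneration, but that algebra need not be finitely generated. Instead, I will approximate the filtration by finitely generated truncations, obtaining test configurations whose Futaki invariants converge to a multiple of $\beta_X(E)$ (Fujita, Li). This shows that K-semistability implies $\beta\ge0$.

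For the stable case, the same argument only shows $\beta\ge0$, and upgrading to $\beta>0$ is much harder. The route is through uniform K-stability (Blum–Jonsson): $\delta(X)>1$ gives uniform K-stability. The link between K-stability and $\delta$ then comes from the result of Liu–Xu–Zhuang that $\delta(X)$, when it is at most $1$, is computed by a divisor $E$ whose graded ring is finitely generated. If K-stable $X$ had some $\beta_X(E)=0$, then $\delta(X)=1$ would be computed by a dlt-type divisor $E$ with finitely generated $\mathrm{gr}_E R$. This yields a nontrivial special test configuration with $\Fut=0$, contradicting K-stability.

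I expect \textbf{Step 2}, and especially the finite generation needed for the stable case, to be the main obstacle: it is the deep input of \cite{LXZ}. The natural first attempt is the boundedness of complements combined with an MMP argument, which realizes the minimizer as a Koll\'ar component type divisor.
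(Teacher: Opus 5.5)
The paper gives no proof of this theorem; it only cites \cite{FuValuative,LiValuative,FO16,BJ17,LXZ}, so I compare your outline with the standard argument there. Your Step 1 is the Fujita--Li computation. If $r(\ord_{X_0})=c\,\ord_E$, then $\Fut$ of a special test configuration is a positive multiple of $c\,\beta_X(E)$. Combined with Theorem \ref{thm:specialtestconfig}, this correctly gives both implications from $\beta\ge 0$ (resp. $>0$) to K-semistability (resp. K-stability). The gap is in Step 2.

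You assert that the truncated test configurations $\calX_m$ approximating the filtration induced by $\ord_E$ have Futaki invariants converging to a multiple of $\beta_X(E)$. This is unjustified, and it is not what Fujita or Li prove. The $K_{\overline{\calX}_m/\bP^1}$ term in $\Fut(\calX_m)$ sees the log discrepancies and multiplicities of all components of the central fibres of $\calX_m$, which are in general reducible and non-reduced. Nothing forces this term to converge to $A_X(E)$.

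The missing idea is the \emph{Ding invariant}, which replaces that term by a log canonical threshold of the base ideals of the filtration on $X\times\bA^1$. That threshold is continuous under truncation and is bounded above by the value coming from $\ord_E$ itself, namely $A_X(E)$. The energy term converges to $S_X(E)$. Hence $\limsup_m \mathrm{Ding}(\calX_m)\le\beta_X(E)$ (suitably normalized), so $\beta_X(E)<0$ produces a test configuration with negative Ding invariant.

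Even then you cannot conclude directly, because Berman's inequality $\Fut\ge\mathrm{Ding}$ points the wrong way. You also need that K-semistability implies Ding-semistability. This follows from the Li--Xu MMP behind Theorem \ref{thm:specialtestconfig}: those operations (with base change) do not increase the Ding invariant, and they end at a special test configuration, where $\Fut=\mathrm{Ding}$.

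With this repair, your stable-case argument goes through; note that it relies on $\delta(X)\ge 1$ from the semistable case, so it inherits the same fix. A divisor with $\beta_X(E)=0$ computes $\delta(X)=1\le\frac{n+1}{n}$, so $\mathrm{gr}_E R$ is finitely generated by \cite{LXZ}. The induced test configuration is normal and nontrivial, and its Futaki invariant is a positive multiple of $\beta_X(E)=0$. By Theorem \ref{thm:specialtestconfig} this contradicts K-stability; you do not need to prove it is special. The remark about uniform K-stability plays no role and can be dropped.
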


This should be taken with a (possibly smaller) \textcolor{red}{\small{RED FLAG}} than the test configuration definition, because while it may appear more birational geometric in nature, it still requires checking every divisor $E$ over $X$.  However, it is a nice complement to the $\alpha$-invariant criteria because it allows us to prove that certain varieties are \textit{not} (semi)stable.  Such examples can be found in the exercises.  We provide a sample computation below. 

\begin{example} 
Let's compute $\beta_{\bP^2}(E)$ where $E$ is the exceptional divisor of a blow up of a point on $\bP^2$.  Let $\mu: Y \to \bP^2$ be the blow up.  Because $K_Y = \mu^*(K_{\bP^2}) + E$, we have $A_{\bP^2}(E) = 1+1 = 2$. 

Now we need to compute $S_{\bP^2}(E)$.  We know $(-K_{\bP^2})^2 = 9$. To compute $\vol(-K_{\bP^2} - tE)$, we need to understand when the divisor is ample, big, and nef.  The volume is non-zero when the divisor is big (by definition of being big).

We know that $\mu^*(-K_{\bP^2}) - tE = -K_Y + (1-t)E$.  The Mori cone of $Y$ is generated by $E$ and the class of a fiber $F$ of the ruled surface $Y \to \bP^1$.  Because $(-K_Y + (1-t)E)\cdot E = 1-(1-t) = t$, this is positive on $E$ for $t >0$.  Similarly, $(-K_Y + (1-t)E) \cdot F = 2 +(1-t) = 3 - t$, so this is positive on $F$ for $t < 3$.  Because this is ample exactly when it has positive intersection with both $F$ and $E$, this is ample for $0 < t < 3$.  Also, when $t = 3$, this is trivial on $F$, and the morphism induced by the linear system $|m(-K_Y+(1-t)E)|$ therefore contracts $F$ and hence contracts $Y$ to a curve.  Because this is not birational for any $m > 0$, the divisor is not big for any $t \ge 3$.  

This implies that: 
\begin{align*}
    \text{for } 0 \le t \le 3,& \quad \vol(\mu^*(-K_{\bP^2}) - tE) = (\mu^*(-K_{\bP^2}) -tE)^2 = 9-t^2 \\
    \text{for } t \ge 3, \quad & \vol(\mu^*(-K_{\bP^2}) - tE) = 0.\\
\end{align*}

So, we can compute $S_{\bP^2}(E)$: 
\[ S_{\bP^2}(E) = \frac{1}{9} \int_0^3 (9-t^2) dt = \frac{18}{9} = 2.\]

Finally, we can conclude that $\beta(E) = A(E) - S(E) = 0$.
\end{example} 

Note that this alone does not imply that $\bP^2$ is K-semistable; to use the valuative criterion, we need to show that $\beta(E) \ge 0$ for \textit{every} divisor $E$ over $\bP^2$.

\begin{remark}
    The tools in this section are sufficient to characterize the stability of del Pezzo surfaces.  
\end{remark}

We list the stability of each del Pezzo surface, along with a reason. Recall that, for a del Pezzo surface $X$, its degree is $d = (-K_X)^2$.

\begin{center}
\begin{tabular}{c|c|c}
    degree & stability & reason \\\hline
    9 & polystable & equivariant $\alpha$-invariant \\
    8 $(X = \bP^1 \times \bP^1)$ & polystable & equivariant $\alpha$-invariant, Exercise 5 \\
    8 $(X = \mathbb{F}_1)$ & unstable & $\beta$-invariant computation, Exercise 7 \\
    7 & unstable & $\beta$-invariant computation, Exercise 7  \\
    6 & polystable &  equivariant $\alpha$-invariant, Exercise 5 \\
    5 & stable & equivariant $\alpha$-invariant plus finite $\Aut(X)$, \cite{Chel08} \\
    $\le 4$ & stable & $\alpha$-invariant, Cor. \ref{cor:deg4} \\
\end{tabular}
\end{center}

\subsection{Exercises}

\begin{enumerate}

\item (a) If $X$ is the blow up of $\bP^2$ at $k$ sufficiently general points, $0 \le k \le 8$, or $X = \bP^1 \times \bP^1$, prove that $X$ is Fano. 

(b) Determine what `sufficiently general' means in the previous exercise (it should be a condition on the points that were blown up).

(c) (Harder) Prove that every smooth Fano surface is one of those listed in (a).  Such surfaces are called \textit{del Pezzo} surfaces.  

\item Give an example of a smooth Fano variety whose automorphism group is non-reductive.  By Matsushima's result \cite{Matsushima}, this cannot be K-polystable.

\item If $X$ is the blow up of $\bP^2$ at $r$ general points, $0 \le r \le 8$, prove that the degree of $X$ is $9 - r$. 

\item Show that $\bP^1 \times \bP^1$ and the blow up of $\bP^2$ at three points are K-polystable (hint: $\Aut(X)$-invariant divisors?). 

\item Show that the blow up of $\bP^2$ at a point is K-unstable by computing $\beta_X(E)$, where $E \subset X$ is the exceptional divisor of the blow up. 

\item Show that the blow up of $\bP^2$ at a point (a del Pezzo surface of degree $8$) is K-unstable by computing $\beta_X(E)$, where $E \subset X$ is the exceptional divisor of the blow up. 

\item Show that the del Pezzo surface of degree $7$ is K-unstable.

\item By blowing up the cone point and computing $\beta_X(E)$, show that the weighted projective space $\bP(1,1,n)$ is K-unstable for any $n >1$.  

\item Explain why the blow up of $\bP^3$ along a planar cubic is Fano, and show that it is K-unstable.   (Hint: there are two natural divisors to check $\beta$ of; the exceptional of the blow-up and the strict transform of the plane containing the curve.  Only one will work to show you it is K-unstable.)

\item This exercise will introduce some ideas related to K-stability of pairs.  Let $X = \bP(1,1,2)$.
    \begin{enumerate}
        \item Show that the sections of $\mathcal{O}_{X}(2)$ define an embedding $X \hookrightarrow \bP^3$ whose image is $(xy = z^2)$.  This shows directly that $\bP(1,1,2)$ is the singular quadric cone. 
        \item Show that $X$ is K-unstable.
        \item Given a log Fano pair $(X,D)$ of dimension $n$, we can define K-stability of the pair.  For any prime divisor $E$ over $X$, define $\beta_{(X,D)}(E) = A_{(X,D)}(E) - S_{(X,D)}(E)$, where $A_{(X,D)}(E)$ is the log discrepancy of the pair, and for any morphism $\mu: Y \to X$ extracting $E$, \[ S_{(X,D)}(E) = \frac{1}{(-K_X-D)^n} \int_0^\infty \mathrm{vol}(\mu^*(-K_X-D) - tE) dt .\]
        Then, we say $(X,D)$ is K-semistable if $\beta_{(X,D)}(E) \ge 0$ for every $E$.  

        Let $c \in \bQ^{>0}$.  Let $D = cQ$, where $Q$ is the hyperplane section at infinity of the cone $\bP(1,1,2)$.  Compute $\beta_{(X,cQ)}(E)$, where $E$ is the exceptional divisor of the resolution, and compute $\beta_{(X,cQ)}(Q)$.  Show that $(X,cQ)$ is K-unstable for all $c \ne 1/2$. 

    \end{enumerate}

\end{enumerate}

\section{Abban-Zhuang theory of admissible flags}\label{s:AZ}

While the invariants $\beta$ and $\delta$ are very useful for determining if Fano varieties are K-unstable, it is still very difficult to prove something is actually K-stable because you must check an inequality for every divisor $E$ over your variety.  One may ask how feasible it is to actually determine the stability of an arbitrary Fano variety.  In general, this is incredibly difficult, but one method for checking K-stability that has proved to be extraordinarily useful is the theory of admissible flags introduced by Abban and Zhuang in \cite{AZ}.  Very roughly, you may think about this as an adjunction result for K-stability: it allows you to restrict to smaller dimensional subvarieties and check appropriate inequalities there.  This leads to an inductive approach to determine K-stability.

To use this theory, we first need to introduce the language of filtrations.   We will do this using the language of log Fano pairs, but you may also set $\Delta = 0$ in what follows.  For more details and justification of the following results, see \cite{AZ}.

\begin{definition}
    If $L$ is a big line bundle on a variety $X$, the \textit{graded linear series} $\mathbf{V}_{\bullet} = \{ V_m\}_{m \in \mathbb{N}}$ to be $V_m = H^0(X, mL)$ for $m \in \mathbb{N}$ is called the \textbf{complete linear series associated to} $L$. 

    The \textbf{volume} of $\mathbf{V}_{\bullet}$ is $\vol(\mathbf{V}_\bullet) = \lim_{m \to \infty} \dim V_m / (m^n/n!) = \vol(L)$.
\end{definition}

We will use $L = -K_X - \Delta$ in what follows.  Now, we refine this series by a divisor $E$ over $X$.

\begin{definition}\label{def:refinement}
    For any divisor $E$ over $X$ and positive real number $t$, define the linear series \[(\calF_E V_m)_t = \{s \in V_m \mid \ord_E(s) \ge mt \},\] where we pullback $s \in V_m$ to a variety $Y$ extracting $E$. 
    
 For a real number $t$,
\[ \vol (\calF_E \mathbf{V}_\bullet )_t = \lim_{m\to\infty} \dim (\calF_E V_m)_t / (m^n/n!) .\]
Then, let \[ S(\mathbf{V}_\bullet; E) = \frac{1}{\vol \mathbf{V}_\bullet} \int_0^\infty \vol (\calF_E \mathbf{V}_\bullet)_t dt .\] 
\end{definition}

If $(X,\Delta)$ is a log Fano pair of $\dim X = n$ and $L = -K_X - \Delta$, perhaps convince yourself that this is just the definition of $S_{(X,\Delta)}(E)$.

\begin{definition}
    As in the previous section, we define the $\delta$-invariant \[\delta(X,\Delta;L) := \delta(X,\Delta;\mathbf{V}_\bullet) = \inf_{E} \frac{A_{(X,\Delta)}(E)}{S(\mathbf{V}_\bullet;E)}.\]
\end{definition}

\begin{definition}
        Let $Z$ be a subvariety of $X$.  We define 
        \[\delta_Z(X,\Delta;\mathbf{V}_\bullet) = \inf_{E: Z \subset C_X(E)} \frac{A_{(X,\Delta)}(E)}{S(\mathbf{V}_\bullet;E)}.\]
\end{definition}

It is clear that $\delta(X,\Delta;\mathbf{V}_\bullet) = \inf_{Z \subset X} \delta_Z(X,\Delta;\mathbf{V}_\bullet)$.  Furthermore, $(X, \Delta)$ is K-semistable if $\delta_p(X,\Delta; \mathbf{V}_\bullet) \ge 1$ for all points $p \in X$.  

Next, we `restrict' to $E$:

\begin{definition}
    Define the muligraded linear series \[W_{m,j}^E = Im (H^0(Y, mL-jE)) \to H^0(E, mL|_E - jE|_E))\] where $L|_E = -K_E - \Delta_E$ and $\Delta_E$ is the different, and $E|_E$ is a sensible divisor as long as $E$ is `nice' (precisely, we need $E$ to be of plt type).  

Then, define \[\vol(\mathbf{W}^E_{\bullet \bullet}) = \lim_{m \to \infty} \sum_{j\ge 0} \dim W_{m,j}^E / (m^n/n!) .\]  It is a theorem that in this set-up, $\vol(\mathbf{W}^E_{\bullet \bullet}) = \vol(\mathbf{V}_\bullet)$.
\end{definition}

Now, if we refine this multigraded linear series by a divisor $F$ over $E$, we analogously define the volume as in Definition \ref{def:refinement}.

\begin{definition}
    For a divisor $F$ over $E$ and a positive number $t$, define \[(\calF_F W_{m,j})_t = \{ s\in W_{m,j}^E \mid \ord_D (s) \ge mt \}\] and define 
\[ \vol (\calF_F \mathbf{W}^E_{\bullet \bullet} )_t = \lim_{m\to\infty} \sum_{j \ge 0} \dim (\calF_F W_{m,j}^E)_t / (m^{n}/{n}!) .\]

Finally, define \[ S(\mathbf{W}^E_{\bullet \bullet};F) = \frac{1}{\vol \mathbf{W}^E_{\bullet \bullet}} \int_0^\infty \vol (\calF_F \mathbf{W}^E_{\bullet \bullet})_t dt .\]
\end{definition}

Finally, we get to the adjunction-like result.  

\begin{theorem}
    For a primitive divisor $E$ over $X$ and any $Z \subset X$ such that $Z \subset C_X(E)$, let $\pi: Y \to X$ be a prime blow-up extracting $E$.  Then,

\[\delta_Z(X,\Delta; \mathbf{V}_\bullet) \ge \min \left\{ \frac{A_{(X,\Delta)}(E)}{S(\mathbf{V}_\bullet; E)}, \inf_{Z'} \delta_{Z'}(E, \Delta_E; \mathbf{W}^E_{\bullet\bullet}) \right\} \]
where the second infimum is taken over all $Z' \subset Y$ such that $\pi(Z')  = Z$, and 
\[ \delta_{Z'}(E, \Delta_E; \mathbf{W}^E_{\bullet\bullet}) = \inf_{F} \frac{A_{(E, \Delta_E)}(F)}{S(\mathbf{W}^E_{\bullet\bullet}; F)}\]
where $F$ is a prime divisor over $E$ with $Z' \subset C_E(F)$.  
\end{theorem}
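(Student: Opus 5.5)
This is the Abban--Zhuang adjunction inequality, and I would prove it through basis type divisors and compatible filtrations. Write $L=-K_X-\Delta$ and $V_m=H^0(X,mL)$. An $m$-basis type divisor is $D=\frac{1}{mN_m}\sum_i \{s_i=0\}$ for a basis $\{s_i\}$ of $V_m$, where $N_m=\dim V_m$. One standard fact is that $S(\mathbf{V}_\bullet;F)=\lim_m \sup_D \ord_F(D)$, with the supremum attained by bases compatible with the filtration $\calF_F$. This gives
\[\delta_Z(X,\Delta;\mathbf{V}_\bullet)=\lim_m \delta_{Z,m},\qquad \delta_{Z,m}:=\sup\{\lambda : (X,\Delta+\lambda D) \text{ is lc at the generic point of } Z \text{ for all } m\text{-basis type } D\}.\]
So it is enough to show that for every $\lambda$ smaller than the right-hand side (up to an error that vanishes as $m\to\infty$) and every $m$-basis type $D$, the pair $(X,\Delta+\lambda D)$ is lc near $Z$.

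The key construction is this. Fix any $m$-basis type $D$. Because filtrations behave under change of basis, there is a basis of $V_m$ compatible with both $\calF_E$ and a refining filtration. Replacing $D$ by the divisor built from such a basis can only make the lc condition harder, so I may assume $D$ is compatible with $\calF_E$. Then $\pi^*D=S_m E+\Gamma$, where $S_m=\frac{1}{mN_m}\sum_j j\dim \mathrm{gr}^j_{\calF_E}V_m\to S(\mathbf{V}_\bullet;E)$. The restriction $\Gamma|_E$ is exactly an $(m,\cdot)$-basis type divisor of the refinement $\mathbf{W}^E_{\bullet\bullet}$, since the graded pieces $\mathrm{gr}^j V_m$ inject into $W^E_{m,j}$ by definition.

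The adjunction step goes as follows. Take $\lambda<\min\{A/S(E),\ \inf_{Z'}\delta_{Z'}(E,\Delta_E;\mathbf{W})\}$. Write
\[\pi^*(K_X+\Delta+\lambda D)=K_Y+\Delta_Y+(1-A_{(X,\Delta)}(E)+\lambda S_m)E+\lambda\Gamma.\]
For $m\gg0$ the coefficient of $E$ is less than $1$, because $\lambda S_m<A$. So it suffices to show that $(Y,\Delta_Y+E+\lambda\Gamma)$ is lc near $\pi^{-1}(Z)$, since lowering the coefficient of $E$ preserves this. By inversion of adjunction, which applies because $E$ is of plt type via the prime blow-up, this reduces to showing that $(E,\Delta_E+\lambda\Gamma|_E)$ is lc along every $Z'$ over $Z$. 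That holds because $\lambda<\delta_{Z'}(E,\Delta_E;\mathbf{W})$ and $\Gamma|_E$ is basis type for $\mathbf{W}$, after identifying $\delta_{Z'}$ of the multigraded series with a limit of lc thresholds of its basis type divisors, as in the first paragraph. Letting $m\to\infty$ gives the stated inequality.

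The main obstacle is making the multigraded refinement rigorous. One must check that $\vol(\mathbf{W}^E_{\bullet\bullet})=\vol(\mathbf{V}_\bullet)$ together with the normalisation by $N_m$, so that $\Gamma|_E$ really is basis type for $\mathbf{W}$ in the sense that computes $S(\mathbf{W}^E_{\bullet\bullet};F)$ with the right constants. One must also prove the basis-type description of $\delta$ for multigraded series, whose limits need a Okounkov-body or concavity argument. I would first try to settle this using uniform convergence of the jumping-number measures of the filtrations.
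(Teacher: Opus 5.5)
Your route is the one Abban and Zhuang take in \cite{AZ}; the survey cites that proof rather than giving one. You pass to $m$-basis type divisors compatible with $\calF_E$ and write $\pi^*D=S_m(\mathbf{V}_\bullet;E)\,E+\Gamma$, with $\Gamma|_E$ an $m$-basis type divisor of $\mathbf{W}^E_{\bullet\bullet}$. This uses that $\mathrm{gr}^j_{\calF_E}V_m\to W^E_{m,j}$ is an isomorphism, which holds because $W^E_{m,j}$ is defined as an image; injectivity alone would not give a basis. You then finish with inversion of adjunction on the plt-type prime blow-up.

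The reduction to $\calF_E$-compatible $D$ should be phrased one valuation at a time, not as a single replacement of $D$. For each $G$ over $X$ with $Z\subseteq C_X(G)$, a basis compatible with both $\calF_E$ and $\calF_G$ realizes $\max_D\ord_G(D)=S_m(\mathbf{V}_\bullet;G)$. Hence $\delta_{Z,m}=\inf_G A_{(X,\Delta)}(G)/S_m(\mathbf{V}_\bullet;G)$ is already the infimum of $\lct_Z$ over $\calF_E$-compatible $D$. On the $X$ side you then only need the pointwise limits $S_m(\mathbf{V}_\bullet;G)\to S(\mathbf{V}_\bullet;G)$, not the full identity $\delta_Z=\lim_m\delta_{Z,m}$.

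The genuine gap is a quantifier problem in the adjunction step. For a fixed $m$ you need $(E,\Delta_E+\lambda\Gamma|_E)$ to be lc at the generic point of every $Z'$ with $\pi(Z')=Z$ simultaneously. There are infinitely many such $Z'$ whenever $E$ is exceptional, e.g.\ all points of $\pi^{-1}(Z)$ when $Z$ is a point. The pointwise fact $\lambda<\delta_{Z'}(E,\Delta_E;\mathbf{W}^E_{\bullet\bullet})=\lim_m\delta_{Z',m}$ only gives lc along $Z'$ for $m\ge m_0(Z')$, and $m_0(Z')$ may be unbounded.

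What closes the argument is a bound uniform over all prime divisors $F$ over $E$: $S_m(\mathbf{W}^E_{\bullet\bullet};F)\le(1+\epsilon_m)\,S(\mathbf{W}^E_{\bullet\bullet};F)$ with $\epsilon_m\to0$ independent of $F$. This is the multigraded version of the Blum--Jonsson estimate \cite{BJ17}, established in \cite{AZ} by the same Okounkov-body argument, using that the refinement has bounded support and contains an ample series. Choose $\lambda<\lambda'<\inf_{Z'}\delta_{Z'}(E,\Delta_E;\mathbf{W}^E_{\bullet\bullet})$. Then for every $F$ whose center contains some such $Z'$ you get $\lambda\,\ord_F(\Gamma|_E)\le\lambda S_m(\mathbf{W}^E_{\bullet\bullet};F)\le\lambda(1+\epsilon_m)A_{(E,\Delta_E)}(F)/\lambda'\le A_{(E,\Delta_E)}(F)$ as soon as $(1+\epsilon_m)\lambda\le\lambda'$. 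Your closing remark about uniform convergence points at this tool. However, it has to be stated as this one-sided estimate, uniform over valuations on $E$, because that is the real analytic input of the theorem.
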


This says that the $\delta$-invariant of $X$ can be bounded below by just the $\delta$-value computed by $E$ and then the $\delta$-invariant of a smaller dimensional variety ($E$).  Furthermore, it can be applied repeatedly to get an inductive result, reducing ultimately to the case $Z'$ is a point in a curve $E$.

\begin{remark}
    This method has been incredibly useful in the quest to determine K-(poly/semi) stability of every smooth Fano threefold.  For example, this is readily employed in \cite{Fano} to determine the K-(poly/semi)stability of the general member of every deformation type of smooth Fano threefolds.  In many cases, it is used to further show that \textit{every} member (not just general ones) are K-(poly/semi)stable. 
\end{remark}

We will use this to show several varieties are K-semistable, but first will introduce a more formulaic version due to \cite{Fano}. 

Suppose $(X,\Delta)$ is a klt pair with $\Delta$ effective.  Suppose $E$ is a prime divisor over $X$ of \textit{plt type}, i.e. there exists a morphism $\pi: \tX \to X$ extracting $E$ such that $-E$ is $\pi$-ample and $(\tX, \Delta + E)$ is plt, where $\tDelta$ is the divisor satisfying
\[ K_\tX + \tDelta + (1-A_{X,\Delta }(E))(E) = \pi^*(K_X + \Delta). \]

Define $\Delta_Y$ by 
\[ K_E + \Delta_E = (K_\tX + \tDelta + E)|_E .\]

Then, as above, 

\[\delta_Z(X,\Delta; \mathbf{V}_\bullet) \ge \min \left\{ \frac{A_{(X,\Delta)}(E)}{S(\mathbf{V}_\bullet; E)}, \inf_{Z'} \delta_{Z'}(E, \Delta_E; \mathbf{W}^E_{\bullet\bullet}) \right\} \]

where $Z'$ runs over subvarieties of $E$.  We can compute the first term: it is just $A_{X,\Delta}(E)/S_{X,\Delta}(E)$.  Furthermore, if $X$ is a surface, then $E$ is a curve, and must be smooth by the plt-type assumption.  Therefore, $Z'$ just ranges over points $p \in E$ with $\pi(p) = Z$, and if $p \in C_E(F)$, then $p = F$.  So, we simply need
\[ \delta_p(E, \Delta_E; \mathbf{W}^E_{\bullet\bullet}) = \frac{A_{E,\Delta_E}(p)}{S(\mathbf{W}^E_{\bullet\bullet}; p)}.  \]

The numerator is just $1 - \ord_p(\Delta_E)$, and the denominator is computed as follows.  Let $\tau$ be the pseudoeffective threshold of $\pi^*(-K_X -\Delta) - uE$ (the maximal $u$ such that this is pseudoeffective), and let $P(u) = P(\pi^*(-K_X -\Delta) - uE)$ by the positive part of the Zariski decomposition of $\pi^*(-K_X -\Delta) - uE$ and $N(u) = N(\pi^*(-K_X -\Delta) - uE)$ the negative part.  (You would have already had to find these to compute $S_{X,\Delta}(E)$.)  Provided that $p$ is not contained in the support of $N(u)$, then 
\[ S(\mathbf{W}^E_{\bullet\bullet}; p) = \frac{2}{\vol L} \int_0^\tau \int_0^\infty \vol(P(u)|_E - vp ) dv du. \]

If $t(u)$ is the pseudoeffective threshold of $P(u)|_E - vp$, this is just 
\[ S(\mathbf{W}^E_{\bullet\bullet}; p) = \frac{2}{\vol L} \int_0^\tau \int_0^{t(u)} \max \{ (\ord_p(P(u)|_E) - v ), 0 \} dv du. \]

In general, if $p \in N(u)$, we must add an extra term. This computation (for surfaces) gives the following result.  See \cite[Theorem 1.106]{Fano} for higher dimensional versions. 

\begin{theorem}{\cite[Theorem 1.106]{Fano}}
    In the above situation, let $P(u) = P(\pi^*(-K_X -\Delta) - uE)$ by the positive part of the Zariski decomposition of $\pi^*(-K_X -\Delta) - uE$ and $N(u) = N(\pi^*(-K_X -\Delta) - uE)$ the negative part.  Then, for any $p \in E$, 
    \[ A_{E,\Delta_E}(p) = 1 - \ord_p(\Delta_E)\] and
    \[ S(\mathbf{W}^E_{\bullet\bullet}; p) = \frac{2}{\vol L}  \int_0^\tau \left((P(u) \cdot E) \ \ord_p(N(u)|_E) +\int_0^{t(u)} \max \{ (\ord_p(P(u)|_E) - v ), 0 \} dv \right)du. \]
\end{theorem}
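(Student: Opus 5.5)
The plan is to unwind the definitions of $A$ and $S$ for the refined linear series $\mathbf{W}^E_{\bullet\bullet}$ on the smooth curve $E$, and compute everything on the surface $\tX$ via Zariski decomposition.

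\emph{The log discrepancy.} Since $E$ is a smooth curve and $\Delta_E$ is a $\bQ$-divisor on $E$ obtained by adjunction, the only prime divisor over $E$ centered at $p$ is $p$ itself. Its log discrepancy with respect to $(E,\Delta_E)$ is therefore $1-\ord_p(\Delta_E)$.

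\emph{Reducing $S$ to a double integral.} Write $L=-K_X-\Delta$ and rescale $j=mu$. Up to finitely many exceptional $m$, the space $H^0(\tX, m(\pi^*L-uE))$ is nonzero only for $u\in[0,\tau]$. On a surface the negative part $mN(u)$ is a fixed component of $|m(\pi^*L-uE)|$, so every section of $m(\pi^*L-uE)$ is a section of $mP(u)$ multiplied by the fixed section of $mN(u)$. Hence $W^E_{m,mu}$ is the image of $H^0(mP(u))$ under restriction, translated by $mN(u)|_E$.

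Take the restriction sequence $0\to \calO(mP-E)\to \calO(mP)\to \calO_E(mP|_E)\to 0$. Because $P(u)$ is nef and big on the interior of $[0,\tau)$, vanishing (or asymptotic surjectivity of restriction for nef and big divisors not contracting $E$) shows that the image has dimension $m\,P(u)\cdot E+O(1)$. Moreover, the sections of that image vanishing to order at least $mv$ at $p$ fill out, asymptotically, all sections of $mP(u)|_E$ vanishing to order at least $mv$ at $p$.

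\emph{Volume of the filtration.} Adding back the fixed part, a section of $W^E_{m,mu}$ has $\ord_p\ge mt$ exactly when the moving part has order at least $m(t-\ord_p(N(u)|_E))$. Thus the normalized dimension of the filtered piece is
\[ \max\{0,\min\{P(u)\cdot E,\ P(u)\cdot E-(t-\ord_p N(u)|_E)\}\}. \]
Summing over $j$ gives a Riemann sum in $u$; the normalization $m^2/2$ in the definition contributes the factor $2$. Integrating in $t$ then gives
\[ (P(u)\cdot E)\,\ord_p(N(u)|_E)+\int_0^{P(u)\cdot E}(P(u)\cdot E-v)\,dv. \]
The second term is the stated inner integral, since on the curve $E$ we have $\vol(P(u)|_E-vp)=\deg(P(u)|_E)-v$ with $t(u)=P(u)\cdot E$. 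Finally, divide by $\vol(\mathbf{W}^E_{\bullet\bullet})=\vol L$, which is the theorem stated earlier.

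\emph{Main obstacle.} The hard step is justifying that the restriction images asymptotically equal the full $H^0(E,mP(u)|_E)$, and that the limits in $m$ and the sums over $j$ commute with the integrals. The first thing to try is to cite the Abban--Zhuang description of refinements by plt-type divisors, which gives this on a surface. The remaining points are that $P(u)$ varies piecewise linearly in $u$ (Zariski chambers), so the integrand is piecewise polynomial, and that the boundary $u$ where $P(u)$ fails to be big has measure zero.
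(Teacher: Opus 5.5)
Your proposal is correct. It is the argument the paper sketches before deferring to \cite[Theorem 1.106]{Fano}: the fixed part $N(u)$ of $\pi^*(-K_X-\Delta)-uE$ shifts every order at $p$ by $\ord_p(N(u)|_E)$, which gives $(P(u)\cdot E)\,\ord_p(N(u)|_E)$. The restricted moving part gives $\int_0^\infty\vol(P(u)|_E-vp)\,dv=\tfrac12(P(u)\cdot E)^2$, and your reading of $\ord_p(P(u)|_E)$ and $t(u)$ as $\deg P(u)|_E=P(u)\cdot E$ is exactly how the paper's examples use them.

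For the one imported step, a vanishing theorem does not apply directly, since $mP(u)-E-K_{\tX}$ need not be nef on components of $N(u)$ where $P(u)$ is numerically trivial. Justify it instead by the restricted-volume identity $\vol_{\tX|E}(\pi^*(-K_X-\Delta)-uE)=P(u)\cdot E$. This applies because for $0<u<\tau$ one has $E\not\subset\Supp N(u)$ and $P(u)\cdot E>0$, so $E$ is not in the augmented base locus.
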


\begin{example}
Let us use this theory to show every smooth cubic surface is K-semistable.  Let $p \in X$ be a point in $X$.  We will bound $\delta_p(X)$. Let $E \subset X$ be an anticanonical divisor $E \in |-K_X|$ through $p$ (this exists as $-K_X$ is very ample).  

Because $\Delta = 0$ and $E$ is a curve on $X$, $A_{X}(E) = 1$.  We can also compute $S_{X}(E)$:
\begin{align*}
    S_{X}(E) &= \frac{1}{\vol(-K_X)} \int_0^\infty \vol(-K_X - tE) dt \\
    &= \frac{1}{3} \int_0^1 (-K_X + tK_X)^2 dt\\
    &= \frac{1}{3} \int_0^1 (1-t)^2(-K_X)^2 dt \\
    &= \frac{1}{3} \int_0^1 3(1-t)^2 dt \\
    &= \frac{1}{3} 
\end{align*}
Therefore, $A_{X}(E)/S_{X}(E) = 3 > 1$.  Now, as $-K_X - uE$ is nef if and only if it is pseudoeffective if and only if $0 < u < 1$, $P(u) = -K_X - uE$ and $N(u) = 0$.  Restricting to $E$, $P(u)|_E = (1-u)E|_E = 3(1-u)p$.  Then, $\ord_p(P(u)|_E) = 3(1-u)$, so  
\begin{align*}
    S(\mathbf{W}^E_{\bullet\bullet}; p) &= \frac{2}{\vol L} \int_0^\tau \int_0^{t(u)} \max \{ (\ord_p(P(u)|_E) - v ), 0 \} dv du \\
    &= \frac{2}{3} \int_0^1 \int_0^{3(1-u)} (3(1-u) - v) dv du \\
    &= 1.
\end{align*}

Therefore, \[ \delta_p(E, \Delta_E; \mathbf{W}^E_{\bullet\bullet}) = \frac{A_{E,\Delta_E}(p)}{S(\mathbf{W}^E_{\bullet\bullet}; p)} = 1. \]
So, for any $p \in X$, 
\[\delta_p(X;-K_X) = \delta_p(X; \mathbf{V}_\bullet) \ge \min \left\{ \frac{A_{(X,\Delta)}(E)}{S(\mathbf{V}_\bullet; E)}, \delta_{p}(E, \Delta_E; \mathbf{W}^E_{\bullet\bullet}) \right\} = \min \{ 3, 1 \} = 1. \]
Because $\delta(X;-K_X) = \inf_{p\in X} \delta_p(X;-K_X)$, we have proven $\delta(X; -K_X) \ge 1$ so $X$ is K-semistable.

In fact, one can prove these are actually K-stable using the last sentence of \cite[Thm. 1.2]{AZ}.
\end{example}

\begin{example}
   Next, we use this to show that the pair $(\bP(1,1,2), \frac{1}{2} Q)$ from the previous exercises is K-semistable.  For $p$ a smooth point, take $E$ to be a ruling through the point $p$.  In this case, $Z'$ will just equal $p$ as in the previous example.  For $p$ the singular point, take $E$ to be the exceptional divisor of the blow-up.  In this case, the $Z'$ will have to range through points on the curve $E$. 

   Suppose first that $p$ is a smooth point of $X = \bP(1,1,2)$ and let $E \subset X$ be a ruling through $p$.  Because $E$ is not contained in $\Delta = \frac{1}{2}Q$, $A_{X, \Delta}(E) = 1$.  We can also compute $S_{X, \Delta}(E)$:

\begin{align*}
    S_{X,\Delta}(E) &= \frac{1}{\vol(-K_X-\Delta)} \int_0^\infty \vol(-K_X - \Delta - tE) dt \\
    &= \frac{2}{9} \int_0^3 (3-t)^2E^2 dt\\
    &= \frac{1}{9} \int_0^3 (3-t)^2 dt \\
    &= 1
\end{align*}
Therefore, $A_{X}(E)/S_{X}(E) = 1$.

Now, as $-K_X -\Delta  - uE$ is nef if and only if it is pseudoeffective if and only if $0 < u < 3$, $P(u) = -K_X - \Delta - uE$ and $N(u) = 0$.  Restricting to $E$, $P(u)|_E = (3-u)E|_E = \frac{(3-u)}{2}p$.  Then, $\ord_p(P(u)|_E) = \frac{3-u}{2}$, so  

\begin{align*}
    S(\mathbf{W}^E_{\bullet\bullet}; p) &= \frac{2}{\vol L} \int_0^\tau \int_0^{t(u)} \max \{ (\ord_p(P(u)|_E) - v ), 0 \} dv du \\
    &= \frac{4}{9} \int_0^3 \int_0^{(3-u)/2} (\frac{(3-u)}{2} - v) dv du \\
    &= \frac{1}{2}.
\end{align*}

If $p \in \Supp \Delta_E$, then $A_{E,\Delta_E}(p) = \frac{1}{2}$, and otherwise $ = 1$.  Hence, \[ \delta_p(E, \Delta_E; \mathbf{W}^E_{\bullet\bullet}) = \frac{A_{E,\Delta_E}(p)}{S(\mathbf{W}^E_{\bullet\bullet}; p)} \ge 1. \]

For any $p$ other than the singular point, this proves that $\delta_p(X,\Delta; -K_X - \Delta) \ge 1$.  To complete the proof, it suffices to show this inequality for the singular point $p \in X$.  This is left to the exercises.
\end{example}

\subsection{Exercises}

\begin{enumerate}
    \item Finish Example 3.2.11.
\end{enumerate}

\section{Results on moduli of K-semistable Fano varieties}\label{s:Kmoduli}

Now that we understand how to show Fano varieties are (or are not) K-semistable, we will connect the ideas of K-stability with moduli of Fano varieties.  We will also continue to develop tools to understand K-stability to identify members of K-moduli spaces.

We first enumerate several results that make K-stability a good notion for moduli.  First, a discussion of moduli of Fano varieties in general and some examples of things we `want' from a moduli space. 

\begin{definition}
    A family of varieties $\calX \to T$ is \textbf{$\bQ$-Gorenstein} if $K_{\calX/T}$ is $\bQ$-Cartier.  
\end{definition}

This is a condition we usually impose on moduli problems because it makes things nicely behaved and yields a good moduli theory.  In fact, for technical reasons, we typically assume the \textit{Koll\'ar condition} that every reflexive power of $K_{\calX/T}$ commutes with base change.  

\begin{example}
In a $\bQ$-Gorenstein family, $K_{\calX/T}\vert_{\calX_t} = K_{\calX_t}$ is $\bQ$-Cartier, so ampleness of $-K_{\calX_t}$ is an open condition.  
\end{example}

\begin{question}
    We generally want our moduli spaces to be proper; i.e. ``limits exist in our moduli problem.''  In moduli of varieties of general type when $K_X$ is ample, we do this by taking $\Proj$ of some canonical section ring $R(K_X)$, which is finitely generated by \cite{BCHM}.  For Fano varieties, we instead know that $-K_X$ is ample, and do not have all of the nice results of the MMP at our disposal.  How can we construct limits of families of Fano varieties in a functorial way?
\end{question}

\begin{example}
    We also usually want our moduli spaces to be separated; i.e. ``families have unique limits.''  Here, we encounter a problem: the moduli space of mildly singular Fano varieties with fixed volume and dimension is not separated.  (Compare to: moduli of varieties with ample canonical divisor, where it is separated.) For example, let $X = \bP^1 $.  Then, $X$ is Fano and isotrivially degenerates to $X_0  = \bP^1 \cup \bP^1$ where the two curves are glued at one point; i.e. we can take a family of smooth conics (which are all isomorphic) degenerating to $xy = 0$.  The normalization of $X_0$ is $(\bP^1, \Delta) \cup (\bP^1, \Delta)$ where $\Delta$ is the conductor; one point on each $\bP^1$.  Then, $\vol(-K_X) = 2$; $\vol(-K_{X_0}) = 2\vol(-K_{\bP^1} - \Delta) =2$.  
\end{example}

\begin{example}\label{ex:manetti}
    To actually construct projective moduli spaces of varieties, we must: (1) bound our moduli problem in some way so that we can embed all of the varieties in question into a fixed projective space.  Then, (2) use the Hilbert scheme from that projective space to construct the moduli space (because Hilbert schemes are `nice').  Here, we encounter another problem: the set of log terminal Fano varieties with fixed volume and dimension is not necessarily bounded.  

    For example, we can construct an unbounded number of log terminal degenerations of $\bP^2$, all of which have anticanonical volume $9$.  Let $(a,b,c)$ be a solution to the Markov equation \[ a^2 + b^2 + c^2 = 3abc \] where $a,b,c$ are relatively co-prime, and consider the weighted projective space $\bP(a^2,b^2,c^2)$.  All solutions to the Markov equation are obtained by successively permuting or performing the \textit{mutation} $(a,b,c) \mapsto (a,b,3ab - c)$ starting from the minimal solution $(1,1,1)$. The first few triples in the Markov tree are
    \begin{center}
    \scalebox{1.2}{\begin{tikzpicture}[branch1/.style ={scale=.4}]
\node[scale=.8] at (0,0) (1){$(1,1,1)$};
\node[scale=.8] at (2,0) (2){$(1,1,2)$};
\node[scale=.8] at (4,0) (3){$(1,2,5)$};
\node[scale=.7] at (5.5,.6) (31){$(1,5,13)$};
\node[scale=.5] at (7.2,.9) (311){$(1,13,34)\cdots$};
\node[scale=.5] at (7.2,.3) (312){$(5,13,194)\cdots$};
\node[scale=.7] at (5.5,-.6) (32){$(2,5,29)$};
\node[scale=.5] at (7.2,-.3) (321){$(5,29,433)\cdots$};
\node[scale=.5] at (7.2,-.9) (322){$(2,5,29)\cdots$};
\draw (1)--(2);
\draw (2)--(3);
\draw (3)--(31);
\draw (31)--(311);
\draw (31)--(312);
\draw (3)--(32);
\draw (32)--(321);
\draw (32)--(322);
\end{tikzpicture}}
    \end{center}
    corresponding to the weighted projective spaces $\PP^2$, $\PP(1,1,4)$, $\PP(1,4,25)$, $\dots$.  Any well-formed weighted projective space with anticanonical volume 9 corresponds to one of these (see exercises).  Furthermore, they all admit a smoothing to $\bP^2$ (see exercises).  There are infinitely many of these surfaces, they all have volume 9, have log terminal singularities, and are specializations of the fixed surface $\bP^2$.  (So, the moduli space here is unbounded and ``infinitely'' non-separated.)  
\end{example}

\begin{example}
    Finally, to get a well-behaved moduli space (a ``good quotient'' of the associated moduli stack), we also typically like to have that the automorphism groups of the elements parameterized by the moduli problem are at the very least reductive (even better: finite).  In any case, we know this is not true for Fano varieties (reductivity or finiteness).  Again, contrast with what happens for varieties with ample canonical divisor.
\end{example}

The moral of the previous set of `examples' is that it is probably hopeless to have a well-behaved moduli space of all Fano varieties analogous to that of the canonically polarized case.  But, all of these problems are \textit{solved} by restricting to only K-(semi/poly)stable Fano varieties.

\begin{theorem}[\cite{Jiang,Bir19}]
    The set of K-semistable Fano varieties of dimension $n$ and volume $V$ form a bounded family. 
\end{theorem}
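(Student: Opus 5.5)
The plan is to reduce boundedness to a uniform lower bound on the $\delta$-invariant together with a uniform control on singularities, and then invoke Birkar's boundedness of $\epsilon$-lc Fano varieties (BAB). By the valuative criterion (Theorem \ref{thm:valcriteria}), a K-semistable Fano $X$ of dimension $n$ satisfies $\delta(X)\ge 1$, i.e. $A_X(E)\ge S_X(E)$ for every prime divisor $E$ over $X$. So the first step is to convert this into a lower bound on log discrepancies depending only on $n$ and $V=(-K_X)^n$. This would show that $X$ is $\epsilon$-lc for some $\epsilon=\epsilon(n,V)>0$. Once that holds, Birkar's theorem says that $\epsilon$-lc Fano varieties of dimension $n$ form a bounded family, and the statement follows.

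To get the bound, I would compare $S_X(E)$ with the pseudo-effective threshold $T_X(E)$. A standard convexity argument (Brunn--Minkowski for the concave function $t\mapsto \vol(-K_X-tE)^{1/n}$) gives
\[ S_X(E)\ \ge\ \frac{1}{n+1}\,T_X(E). \]
Hence $A_X(E)\ge \frac{1}{n+1}T_X(E)$. It then suffices to bound $T_X(E)$ from below in terms of $A_X(E)$ and the volume. Fix a point $x$ in the center of $E$ with $\ord_E(\mathfrak m_x)\le A_X(E)$ (up to a constant). The key input is that a divisor of large volume has sections of high multiplicity at $x$: a dimension count gives $h^0(-mK_X)\approx \frac{V}{n!}m^n$, while vanishing to order $k$ at $x$ imposes at most about $\frac{\mathrm{mult}_x X}{n!}k^n$ conditions. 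I would therefore produce $D\sim_{\bQ}-K_X$ with $\mathrm{mult}_x D\gtrsim V^{1/n}$ (corrected by the local volume or multiplicity of $x$). Then $\ord_E(D)\ge \mathrm{mult}_x(D)\cdot\ord_E(\mathfrak m_x)$, which gives $T_X(E)\gtrsim V^{1/n}\,\ord_E(\mathfrak m_x)$. Chaining these inequalities yields a lower bound on $A_X(E)$ relative to its order along $\mathfrak m_x$. Combined with Liu's inequality $\widehat{\vol}(x,X)\ge \left(\frac{n}{n+1}\right)^n\frac{V}{\ldots}$ and the local volume bound (Li--Liu, Jiang), this should force the minimal log discrepancy to be at least some $\epsilon(n,V)>0$. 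The cleaner route is Jiang's: show $\alpha(X)\ge \delta(X)/(n+1)\ge 1/(n+1)$, so that $\lct(X,D)\ge \frac{1}{n+1}$ for all $D\sim_\bQ -K_X$, and then apply Birkar's result that Fanos with $\lct$ of $|-K_X|_\bQ$ bounded below and volume bounded below form a bounded family. The inequality $\alpha\ge\delta/(n+1)$ follows directly from $S\ge T/(n+1)$, because $\lct(X,D)=\inf_E A(E)/\ord_E D$ and $\ord_E D\le T(E)$.

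The main obstacle is the final step, the boundedness theorem itself. Given $\alpha(X)\ge\frac1{n+1}$, deducing boundedness is Birkar's deep result (boundedness of complements and BAB), not something I can reprove in outline. So I would cite it as a black box, in the form: for fixed $n$ and $t>0$, the Fano varieties $X$ of dimension $n$ with $\alpha(X)\ge t$ (equivalently, $\lct$ of every $\bQ$-complement bounded below) form a bounded family. The fixed volume $V$ then only serves to identify the component of the moduli problem, and the only argument genuinely written out is the convexity inequality $S\ge T/(n+1)$.
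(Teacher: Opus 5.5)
Your final route is essentially the one behind the cited results. K-semistability gives $\delta(X)\ge 1$, and the concavity estimate $S_X(E)\ge T_X(E)/(n+1)$ then gives $\alpha(X)\ge \delta(X)/(n+1)\ge 1/(n+1)$ (Fujita--Odaka, Blum--Jonsson). One then applies the theorem of Jiang cited in the statement: $\bQ$-Fano varieties of dimension $n$ with $\alpha(X)\ge\theta$ and $(-K_X)^n\ge v$ form a bounded family. Its proof rests on Birkar's BAB theorem. The inequality you actually prove is correct, and with that black box stated this way the reduction is complete.

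However, the black box as you restate it in your last paragraph is false. Your claim that $V$ ``only serves to identify the component'' is also wrong: a lower bound on $\alpha$ alone does not give boundedness.
\begin{itemize}
\item \emph{Counterexample.} For odd $k$, let $\mu_k$ act on $\bP^2$ by $[x:y:z]\mapsto[x:\zeta y:\zeta^2 z]$. Every nontrivial element fixes only the three coordinate points, so $X_k=\bP^2/\mu_k$ is a toric log del Pezzo surface and $K_{\bP^2}$ is the pullback of $K_{X_k}$. Its anticanonical polytope is the triangle of $\bP^2$, whose barycenter is $0$, so $X_k$ is K-polystable with $(-K_{X_k})^2=9/k$.
\item \emph{Why this breaks your version.} By your own inequality $\alpha(X_k)\ge 1/3$ for every $k$. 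Yet the volumes are pairwise distinct, which is impossible in a bounded family. Moreover $X_k$ has a $\frac1k(1,2)$ point, so its minimal log discrepancy is at most $3/k\to 0$. The volume bound is therefore exactly what powers the hard step, a uniform $\epsilon$-lc bound.
\end{itemize}

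This is also why your first sketch does not close. The dimension count only yields $T_X(E)\ge \ord_E(\mathfrak m_x)\,(V/\mathrm{mult}_x X)^{1/n}$. Neither $\mathrm{mult}_x X$ nor $\vol(\ord_E)$ is bounded a priori, so the sentence ``this should force the mld to be at least $\epsilon(n,V)$'' is precisely the content of Jiang's theorem. It does not follow from the estimates you list.

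Finally, your parenthetical ``equivalently, $\lct$ of every $\bQ$-complement bounded below'' is not equivalent to $\alpha(X)\ge t$. For a $\bQ$-complement $B$ the pair $(X,B)$ is lc, so $\lct(X,B)\ge 1$ automatically. By contrast, $\alpha(X)$ is an infimum over all effective $D\sim_{\bQ}-K_X$.

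With the volume hypothesis restored in the black box, your argument is a correct reduction to the cited results.
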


\begin{theorem}[\cite{BLXOpenness,XuOpenness}]
    K-semistability is an open condition in $\bQ$-Gorenstein families. 
\end{theorem}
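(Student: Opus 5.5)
To prove that K-semistability is open in a $\bQ$-Gorenstein family $\calX \to T$ of $\bQ$-Fano varieties, I would reformulate the condition through the valuative criterion, Theorem \ref{thm:valcriteria}. A fiber $\calX_t$ is K-semistable exactly when $\delta(\calX_t) \ge 1$. It therefore suffices to prove two things. The first is that the locus $\{t : \delta(\calX_t) < 1\}$ is constructible. The second is that this locus is closed under specialization. A constructible set stable under specialization is closed, so its complement, the K-semistable locus, is open.

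For the specialization step I would argue that $t \mapsto \delta(\calX_t)$ is lower semicontinuous in the appropriate sense. Suppose a general fiber is destabilized by a divisor $E_t$ with $A(E_t) < S(E_t)$. After a base change and a spreading-out argument, the divisors $E_t$ fit into a family of divisors over the fibers. On the special fiber one obtains a valuation $v_0$ by restriction, for instance via the degeneration to the initial term. One then checks two inequalities: $A$ does not increase under this degeneration, by lower semicontinuity of log discrepancies, and $S$ does not decrease, by upper semicontinuity of $h^0$ applied to the filtered pieces $(\calF V_m)_t$. Together these give $A(v_0)/S(v_0) \le A(E_t)/S(E_t) < 1$, so the special fiber is also K-unstable. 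This is a version of semicontinuity of $\delta$ in families, in the spirit of Blum--Liu.

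For constructibility I would use the boundedness theorem of \cite{Jiang,Bir19} together with the Abban--Zhuang philosophy. The aim is to show that if $\delta(\calX_t) < 1$, then $\delta$ is computed, or at least approximated below $1$, by a divisor of \emph{bounded complexity}: a lc place of a $\bQ$-complement $D \sim_\bQ -K_{\calX_t}$ with bounded denominators. Such complements come from Birkar's boundedness of complements. Complements with bounded denominators form a bounded family over $T$, and their lc places then vary constructibly, by taking a log resolution of the relative family over a stratification of $T$. On each stratum, the invariants $A$ and $S$ of these divisors are locally constant. Here $S$ is computed from volumes of $\pi^*(-K) - tE$, which are constant in families after stratifying, and this gives constructibility.

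The main obstacle is the step reducing an arbitrary destabilizing divisor to a lc place of a bounded complement. The first approach I would try is to show that when $\delta < 1$ the infimum defining $\delta$ is attained by a quasi-monomial valuation. That valuation is then an lc place of some complement, and one perturbs it to a divisorial lc place of an $N$-complement with $N$ uniform in $\dim X$. Ensuring that $A/S < 1$ survives this perturbation requires continuity of $S$ on the relevant simplicial cone of quasi-monomial valuations. This is where the deep input of \cite{BLXOpenness,XuOpenness} enters.
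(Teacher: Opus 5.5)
The paper gives no proof of this statement; it only cites \cite{BLXOpenness,XuOpenness}. Your skeleton matches the skeleton of those proofs: the unstable locus is closed under specialization (lower semicontinuity of $\delta$, due to Blum--Liu) and is constructible (via Birkar's boundedness of complements). However, the specialization step fails as you describe it.

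A divisor $E_t$ over the general fiber spreads out to a family of divisors only over a dense open subset of the base. That says nothing about the special fiber. Over a DVR, the spread-out divisor may meet the special fiber of the model only inside exceptional components, or only in codimension $\ge 2$ on the strict transform of $\calX_0$. So there is no ``restriction'' $v_0$, and there is no lower semicontinuity of log discrepancies to invoke. What is semicontinuous is the log canonical threshold, by inversion of adjunction, and one must degenerate ideals rather than divisors.

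Here is how that goes. Let $v$ be the destabilizing valuation on the generic fiber. Put $\mathfrak a_\lambda=\{f\in\calO_{\calX} : v(f)\ge\lambda\}$ and $\mathfrak b_\lambda=\mathfrak a_\lambda\cdot\calO_{\calX_0}$.
\begin{itemize}
\item Inversion of adjunction gives $\lct(\calX_0;\mathfrak b_\bullet)\le\lct(\calX_\eta;\mathfrak a_\bullet)\le A(v)$.
\item The submodule of $H^0(\calX,-mK_{\calX/T})$ of sections with $v\ge\lambda$ is saturated, hence a direct summand. So its image in $H^0(\calX_0,-mK_{\calX_0})$ has the generic dimension and lies in $H^0(\calX_0,\calO(-mK_{\calX_0})\otimes\mathfrak b_\lambda)$.
\item Taking $F$ over $\calX_0$ nearly computing $\lct(\mathfrak b_\bullet)$ then gives $A(F)/S(F)\le A(v)/S(v)+\epsilon$.
\end{itemize}
Alternatively, degenerate $m$-basis type divisors adapted to this integral filtration, compare $\delta_m$, and let $m\to\infty$.

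The constructibility step also has a false intermediate claim. You assert that the volumes of $\pi^*(-K)-tE$ become constant after stratifying; this is not true in general, since volumes need not be constructible in families. Moreover, each complement has infinitely many lc places, so a single stratification must serve all of them. What makes the claim true here is that extracting lc places of a complement yields a Fano-type model. On a fiberwise log resolution of the bounded family of $N$-complements, the relevant $h^0$'s are then constant by Kawamata--Viehweg vanishing, uniformly on the simplicial cones of lc places, where $A$ is linear.

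Finally, the claim that $\delta<1$ is witnessed by an lc place of an $N$-complement is the real content of \cite{BLXOpenness,XuOpenness}, and you defer it. Replacing a $\bQ$-complement by an $N$-complement is not a perturbation problem. It is Birkar's boundedness of complements, applied on a Fano-type model on which the lc place has coefficient one. Perturbation and continuity of $S$ only serve to pass from a quasi-monomial place to nearby divisorial places of that same complement.

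Boundedness of Fano varieties \cite{Jiang,Bir19} and the Abban--Zhuang method play no role, since the family is already given over a finite-type base. What you have is an outline of the cited proof with one broken step, not a proof.
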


\begin{theorem}[\cite{LWX19,BX19,BHLLX,LXZ, XZ}]
    The moduli stack of K-semistable Fano varieties of dimension $n$ and volume $V$ is proper, and the moduli space of K-polystable Fano varieties of dimension $n$ and volume $V$ is projective. 
\end{theorem}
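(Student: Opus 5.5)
The statement is the K-moduli theorem: the stack $\calM^{Kss}_{n,V}$ of K-semistable Fano varieties of dimension $n$ and volume $V$ is an algebraic stack of finite type, with a good moduli space that is proper and projective. I will build it in the order existence, finite type, good moduli space, properness, projectivity, citing the deep inputs rather than reproving them.

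\textbf{Step 1: the stack is algebraic and of finite type.} By the boundedness theorem of \cite{Jiang,Bir19}, there is a fixed $m$ such that every K-semistable $X$ with $\dim X = n$ and $(-K_X)^n = V$ has $-mK_X$ Cartier and very ample. The $m$-th powers also have bounded Hilbert polynomial and fixed $N = h^0(-mK_X)-1$. Inside $\Hilb(\bP^N)$, consider the locally closed subscheme $Z$ of $\bQ$-Gorenstein families satisfying the Koll\'ar condition with $\calO(1)\cong \omega^{[-m]}$. By openness of K-semistability \cite{BLXOpenness,XuOpenness}, the K-semistable locus $Z^{ss}\subset Z$ is open. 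The stack is then the quotient $[Z^{ss}/\PGL(N+1)]$, hence algebraic and of finite type.

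\textbf{Step 2: a good moduli space with K-polystable points.} I would apply the Alper--Halpern-Leistner--Heinloth criterion, which requires two properties. The first is $\Theta$-reductivity and S-completeness. S-completeness comes from the uniqueness of K-polystable degenerations: two K-semistable families over a punctured DVR that agree generically can be joined through a $\bG_m$-equivariant family over $[\bA^2/\bG_m]$ \cite{LWX19,BX19}. $\Theta$-reductivity comes from the filtration and finite generation results of \cite{BHLLX}. The second property is reductivity of the automorphism groups of closed points. Closed points are exactly the K-polystable varieties, since they have no nontrivial special degenerations. Their automorphism groups are reductive; this is the algebraic Matsushima theorem. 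The output is a separated good moduli space $M^{Kps}_{n,V}$.

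\textbf{Step 3: properness.} I would prove the existence part of the valuative criterion. Given a family of K-semistable Fano varieties over the punctured disk, after a finite base change it should extend to a K-semistable limit. Take any $\bQ$-Gorenstein filling. If its central fiber is unstable, degenerate it along the valuation computing the minimal $\delta$-invariant, using the optimal destabilization theorem. This requires finite generation of the associated graded ring, which is the result of \cite{LXZ}. Iterate this process. It terminates by the boundedness from Step 1 and a strict monotonicity of a suitable invariant, which yields the semistable reduction of \cite{BHLLX,LXZ}. This is the main obstacle: the whole argument rests on the finite generation theorem of \cite{LXZ}. I would cite that theorem rather than reprove it.

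\textbf{Step 4: projectivity.} Following \cite{XZ}, I would show that the CM line bundle $\lambda_{CM}$ descends to $M^{Kps}_{n,V}$ and is ample there. Nefness and positivity come from reduced uniform K-stability of K-polystable points, measured against the $\delta$-invariant. Properness together with an ample line bundle gives projectivity.
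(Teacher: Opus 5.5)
The paper gives no proof of this theorem; it only cites \cite{LWX19,BX19,BHLLX,LXZ,XZ}. So the meaningful comparison is with the proofs in those references, and your outline does follow their architecture. Boundedness and openness give a finite-type quotient stack. The Alper--Halpern-Leistner--Heinloth criterion gives a separated good moduli space. The $\Theta$-stratification argument of \cite{BHLLX}, completed by the finite generation theorem of \cite{LXZ}, gives properness. Positivity of $\lambda_{\mathrm{CM}}$ \cite{XZ}, together with reduced uniform K-stability of K-polystable varieties \cite{LXZ}, gives projectivity. At that level the plan is right, but Steps 2 and 3 contain misstatements that would be errors if taken literally.

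\textbf{Step 3: termination.} Termination cannot come from the boundedness in Step 1, because the intermediate special fibers are K-unstable. What you need is boundedness of $\bQ$-Fano varieties of dimension $n$ and volume $V$ whose $\alpha$-invariant is bounded below by a fixed positive constant. This is what \cite{Jiang} actually proves. Since $\alpha\ge\delta/(n+1)$, it covers $\delta\ge\delta_0>0$. You also need that each modification moves the special fiber into a strictly less unstable stratum (larger $\delta$). Then every special fiber stays in $\{\delta\ge\delta_0\}$ for the first filling, and only finitely many strata are met.

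\textbf{Step 3: the modification.} Degenerating the special fiber along its $\delta$-minimizing valuation does not by itself produce a new filling of the family. The actual step is an elementary modification of the family along the optimal destabilizing filtration of the special fiber, i.e. Halpern-Leistner's semistable reduction for $\Theta$-stratified stacks. It must also start from a filling whose special fiber is already a klt Fano variety. Such a filling exists after finite base change by an MMP argument as in \cite{LX14}; an arbitrary $\bQ$-Gorenstein filling is not enough.

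\textbf{Step 2.} Reductive stabilizers are not a separate hypothesis of the criterion. In characteristic $0$, a finite-type stack with affine diagonal has a separated good moduli space if and only if it is $\Theta$-reductive and S-complete. S-completeness already forces closed points to have reductive stabilizers, and this is exactly how \cite{ABHLX} proves the algebraic Matsushima theorem. Both S-completeness and $\Theta$-reductivity of the K-semistable stack are proved in \cite{ABHLX}, building on \cite{BX19}, not in \cite{BHLLX}. Finally, the test object for S-completeness is $[\mathrm{Spec}(R[s,t]/(st-\pi))/\bG_m]$, not $[\bA^2/\bG_m]$. Concretely, two K-semistable families over $\mathrm{Spec} R$ that agree over the generic point are joined by a $\bG_m$-equivariant family over $\mathrm{Spec} R[s,t]/(st-\pi)$.
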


\begin{theorem}[\cite{ABHLX}]
    The automorphism group of a K-polystable Fano variety is reductive. 
\end{theorem}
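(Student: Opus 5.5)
The statement is the theorem of \cite{ABHLX}: the automorphism group of a K-polystable Fano variety $X$ is reductive. I would deduce this from the moduli-theoretic results just stated rather than from any analytic input such as Matsushima's theorem, which is unavailable for singular $X$. The plan is to argue by contradiction. Let $G = \Aut(X)$, a linear algebraic group (it acts faithfully on $H^0(X,-mK_X)$). If $G$ is not reductive, its unipotent radical $R_u(G)$ is nontrivial, so it contains a subgroup isomorphic to $\bG_a$. The goal is to manufacture from this a nontrivial special test configuration with vanishing Futaki invariant whose central fiber is \emph{not} isomorphic to $X$, or else to contradict the K-polystable condition of Definition \ref{def:stability} directly.

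Concretely, I would use the Levi decomposition $G^\circ = R_u(G) \rtimes L$ and choose a one-parameter subgroup $\lambda: \bG_m \to L$ (or a maximal torus of $G$) acting on $\mathrm{Lie}(R_u(G))$ with positive weights. Such a $\lambda$ exists after possibly enlarging: if no torus acts with nonzero weight on the unipotent radical, I would instead use the Jacobson--Morozov-type construction, embedding a $\bG_a$ into a group generated with a torus. The limit $\lim_{t\to 0}\lambda(t)\, u\, \lambda(t)^{-1}$ for $u \in R_u(G)$ is trivial. I would then take an element $u \in R_u(G)$ and form the product test configuration induced by $\lambda$, twisted by $u$. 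The resulting family is a special test configuration with central fiber $X$, because everything is induced by automorphisms. Its Futaki invariant is $\Fut$ of the vector field, and I would need it to vanish. Here the key input is that the Futaki character on $G$ is trivial on a K-semistable $X$. It is a character, and it is nonnegative on both $\lambda$ and $\lambda^{-1}$ by K-semistability, hence zero.

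The harder step is producing a genuinely non-product degeneration. K-polystability only forbids non-product test configurations with $\Fut = 0$, and test configurations built from automorphisms of $X$ are always products. So instead I would work on the moduli stack. By the properness and S-completeness results quoted above (\cite{LWX19,BX19,BHLLX,LXZ,XZ}), the K-semistable stack admits a good moduli space. The stabilizer of a closed point of a stack with a good moduli space is linearly reductive by Alper's theory. It then remains to check that the point $[X]$ is closed, i.e.\ that it does not isotrivially degenerate to another K-semistable Fano. That follows from K-polystability together with the fact that specializations within the K-semistable locus are realized by special test configurations with $\Fut = 0$.

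The main obstacle is the existence of the good moduli space itself, specifically $\Theta$-reductivity and S-completeness (the separatedness argument of \cite{BX19} on uniqueness of K-polystable degenerations). I would take these as given from the cited theorem. I would then verify carefully that closed points of the stack correspond exactly to K-polystable $X$, and invoke Alper's theorem that stabilizers of closed points are reductive in characteristic zero.
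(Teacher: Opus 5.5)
The survey gives no proof of this statement; it only cites \cite{ABHLX}, so the comparison has to be with the argument there. The argument you settle on (your third and fourth paragraphs) is correct once the full K-moduli theorem is granted. Then $[X]$ is a closed point of $\calM_{n,V}$, and by Alper's theorem closed points of a stack with a good moduli space have linearly reductive stabilizers. The stabilizer of $[X]$ is $\Aut(X)$, and in characteristic $0$ linearly reductive means reductive. Three clean-ups are needed.

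(i) Delete your second paragraph rather than keep it as a plan. Apart from only producing product test configurations (as you note yourself), its group theory is wrong. A priori $\Aut(X)^\circ$ need not contain any torus acting nontrivially on the unipotent radical. Jacobson--Morozov cannot supply one either: if $e\in\mathrm{Lie}\,R_u(G)$ sat in an $\mathfrak{sl}_2$-triple, then $h=[e,f]$ would lie in that ideal and act nilpotently, contradicting $[h,e]=2e$.

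(ii) Closedness of $[X]$ needs justification. A non-closed $k$-point degenerates to the closed point $[Y]$ of its fibre over the good moduli space along a map $[\bA^1/\bG_m]\to\calM_{n,V}$; this uses the local structure theorem at $[Y]$ plus the affine Hilbert--Mumford criterion. That map is a special test configuration of $X$ with K-semistable central fibre $Y\not\cong X$. Its Futaki invariant is that of the induced $\bG_m$-action on $Y$, which is $0$ because it and its inverse are both $\ge 0$. K-polystability of $X$ then gives a contradiction. Alternatively, simply cite that closed points of the K-semistable stack are exactly the K-polystable Fano varieties.

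(iii) Properness plays no role. What you actually use is that the stack is of finite type (openness \cite{BLXOpenness,XuOpenness}, boundedness \cite{Jiang,Bir19}) and is $\Theta$-reductive and S-complete. The last two properties are proved in \cite{ABHLX} itself, building on \cite{BX19}.

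The route in \cite{ABHLX} avoids the good moduli space entirely. It combines their S-completeness theorem for K-semistable families with the criterion of Alper--Halpern-Leistner--Heinloth: an affine algebraic group $G$ is reductive if and only if $BG$ is S-complete. For $G=\Aut(X)$, a morphism $\overline{ST}_R\setminus 0\to BG$ is a pair of isotrivial families with fibre $X$ over $\mathrm{Spec}\,R$, glued over $\mathrm{Spec}\,K$. S-completeness extends it to a K-semistable family over $\overline{ST}_R$. Its fibre over the closed point is reached from $X$ by special test configurations with $\Fut=0$, so it is $\cong X$ by polystability. Since $\overline{ST}_R$ is reduced, the extension factors through $BG$. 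This needs no openness, boundedness, $\Theta$-reductivity or good moduli space; indeed, in \cite{ABHLX} the good moduli space was only obtained assuming openness, while reductivity was unconditional.

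Your argument is not circular. However, the existence theorem you invoke derives reductivity of closed-point stabilizers from S-completeness by exactly this $BG$ criterion. So you are running the \cite{ABHLX} argument inside a black box, with extra inputs, and reversing the survey's order, which lists reductivity among the results culminating in the K-moduli theorem. What you gain is a one-line deduction for a reader who accepts that theorem.

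Finally, your opening instinct was right but on the wrong base. Over $\bA^1$, automorphisms only give product test configurations. Over $\overline{ST}_R\setminus 0$, however, one can glue two copies of $X\times\mathrm{Spec}\,R$ by an element of $\Aut(X)(K)$. When $\Aut(X)$ is not reductive, some such gluing does not extend inside $B\Aut(X)$: for $\bG_a$, gluing by $\pi^{-1}$ fails, since every $\bG_a$-torsor on $\overline{ST}_R$ is trivial. S-completeness plus polystability is precisely what forbids this.
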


This culminates in the K-moduli theorem, which we will not prove here: 

\begin{theorem}
    There is an Artin stack of finite type $\calM_{n,V}$ parameterizing families of K-semistable Fano varieties of dimension $n$ and volume $V$ and an associated projective good moduli space $M_{n,V}$ parameterizing K-polystable Fano varieties.  
\end{theorem}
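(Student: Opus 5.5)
The plan is to assemble the K-moduli theorem from the four results stated just before it, following the usual construction of a moduli space from a Hilbert scheme.

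\emph{Step 1: embed everything in one Hilbert scheme.} By the boundedness theorem of \cite{Jiang,Bir19}, there is a single integer $m$ with the following property: for every K-semistable Fano $X$ of dimension $n$ and volume $V$, the divisor $-mK_X$ is Cartier and very ample, with vanishing higher cohomology. So every such $X$ embeds in a fixed $\bP^N$, with $N+1 = h^0(-mK_X)$, and the possible Hilbert polynomials form a finite set. Inside the Hilbert scheme we take the locally closed subscheme $Z$ parametrizing normal, $\bQ$-Gorenstein (Koll\'ar condition) Fano subvarieties $X\subset\bP^N$ with $\calO_X(1)\cong\calO_X(-mK_X)$. Cutting out the isomorphism condition uses the locally closed locus where two line bundles agree. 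By the openness theorem of \cite{BLXOpenness,XuOpenness}, the K-semistable locus $Z^{ss}\subset Z$ is open. We then set $\calM_{n,V}=[Z^{ss}/\PGL(N+1)]$. This is an Artin stack of finite type, and it represents the stated functor because any K-semistable family is locally embedded by $\pi_*\calO(-mK_{\calX/T})$.

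\emph{Step 2: produce a good moduli space.} Here I would apply the Alper--Halpern-Leistner--Heinloth existence criterion, which requires three inputs:
\begin{itemize}
\item reductive stabilizers at closed points;
\item $\Theta$-reductivity;
\item S-completeness.
\end{itemize}
Closed points of $\calM_{n,V}$ should be the K-polystable varieties. This follows from the fact that every K-semistable $X$ degenerates via a special test configuration to a unique K-polystable one, with uniqueness supplied by \cite{LWX19,BX19}. Reductivity of stabilizers at these points is exactly \cite{ABHLX}. The two valuative conditions amount to statements about filling families over punctured $\Theta$ and over $\overline{ST}$. I expect this to be the main obstacle. These conditions are the content of the separatedness-type results in \cite{BX19,ABHLX}: two K-semistable degenerations of the same family are linked by test configurations with vanishing Futaki invariant. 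Since we may assume the cited results, I would simply quote them.

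\emph{Step 3: properness and projectivity.} The existence part of the valuative criterion, that any family over a punctured DVR has a K-semistable limit after base change, is the properness result of \cite{LWX19,BHLLX,LXZ}. Together with the separatedness coming from S-completeness, this shows that $M_{n,V}$ is proper. Projectivity then follows from the ampleness of the CM line bundle on $M_{n,V}$ established in \cite{XZ}. These are exactly the inputs summarized in the preceding theorem, so the final step is just a combination of citations.
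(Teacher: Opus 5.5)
Your proposal is correct and follows the route the paper has in mind. The paper gives no proof; it only says the theorem ``culminates'' from the cited boundedness, openness, properness/projectivity and reductivity results, and your argument (take the Hilbert-scheme quotient, then apply the Alper--Halpern-Leistner--Heinloth criterion) is the standard way those inputs are combined. One small remark: you do not need reductive stabilizers at closed points as a separate input, since in characteristic $0$ it already follows from S-completeness of a stack with affine diagonal, which is exactly how \cite{ABHLX} obtains it.
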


In terms of the explicit issues raised above, we avoid the problem of degenerating to non-normal varieties (K-semistable implies log terminal, which implies normal) and the unbounded issue (other than $\bP^2$ itself, all of the weighted projective spaces gives in the unbounded example are K-unstable), and have properness and reductive automorphism groups.  

\begin{remark}
Everything we have said so far can be done for log Fano pairs $(X,D)$ by  replacing $-K_X$ with $-(K_X + D)$ in all of the definitions.  
\end{remark}

\subsection{Exercises}

\begin{enumerate}

    \item   Let $X=\bP(p,q,r)$ be a weighted projective space (with $p,q,r$ relatively co-prime) such that $(-K_X)^2 = 9$.  Prove that $p = a^2$, $q = b^2$, and $r = c^2$ such that $a^2 + b^2 + c^2 = 3abc$. 

    \item 

    (a) Show that the singularities on $\bP(a^2,b^2,c^2)$ where $a^2 + b^2 + c^2 = 3abc$ are $\bQ$-Gorenstein smoothable (i.e. locally around each singularity, construct a smoothing). 
    
    (b) Show that there are no local-to-global obstructions to deforming $\bP(a^2,b^2,c^2)$, so (a) together with the fact that $(-K_X)^2$ is constant in a $\bQ$-Gorenstein family implies that $\bP(a^2,b^2,c^2)$ is smoothable to $\bP^2$.  

    \item Prove that the general cubic surface is K-semistable using openness of K-semistability.  (Hint: find one with many automorphisms, like the Fermat or $xyz = w^3$, and compute $\alpha_G$.)
    \item Find a smooth Fano threefold that is K-semistable but not K-polystable. (Hint: find an isotrivial degeneration of a smooth Fano threefold to a K-polystable threefold.)
\end{enumerate}

\section{K-stability of singular Fano varieties and K-moduli of cubic surfaces}\label{s:cubics}

The goal of this section is to introduce more invariants related to the study of K-stability to completely determine several K-moduli spaces.  We have already learned some explicit tools for ``what K-stability is'' and that a K-moduli space exists.  But, how can we determine all of the objects in a particular K-moduli space?  We will introduce one more powerful invariant that is particularly useful in this setting.

\subsection{Local-to-global principles and normalized volume}

We start with a motivational theorem:

\begin{theorem}[\cite{Fuj18,Liu18}]\label{thm:volume1}
Assume $X$ is a K-semistable $\bQ$ Fano variety of dimension $n$.  Then, 
\[ 
(-K_X)^n \le (n+1)^n.
\]
Furthermore, equality holds if and only if $X \cong \bP^n$.  
\end{theorem}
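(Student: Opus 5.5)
The plan is to apply the valuative criterion (Theorem \ref{thm:valcriteria}) to a single, well-chosen divisor: the exceptional divisor $E$ of the ordinary blow-up of a \emph{smooth} point $x \in X$. Such a point exists because $X$ is normal.

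First, since $x$ is smooth, $A_X(E) = n$. K-semistability then gives $S_X(E) \le A_X(E) = n$.

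Second, I bound $S_X(E)$ from below in terms of $V := (-K_X)^n$. A standard dimension count shows that imposing vanishing to order $\ge mt$ at a smooth point costs at most $\binom{mt+n-1}{n} \approx (mt)^n/n!$ conditions. Hence
\[ \vol(-K_X - tE) \ge V - t^n \]
for $0 \le t \le V^{1/n}$. Integrating,
\[ S_X(E) \ge \frac{1}{V}\int_0^{V^{1/n}} (V - t^n)\,dt = \frac{n}{n+1} V^{1/n}. \]
Combining with the first step gives $\frac{n}{n+1}V^{1/n} \le n$, that is, $V \le (n+1)^n$. This step is straightforward; I would spell out the restriction sequence $H^0(mL) \to H^0(\mathcal{O}_x/\mathfrak{m}_x^{\lceil mt\rceil})$ to justify the volume bound.

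The equality case is the main obstacle. If $V = (n+1)^n$, then every inequality above is an equality. So for every smooth point $x$ we get $\vol(-K_X - tE) = V - t^n$ for all $t \in [0, n+1]$, and $\beta_X(E) = 0$. For the converse direction, $X = \bP^n$ clearly attains equality and is K-semistable (shown earlier in the paper).

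For the forward direction, I would first use that $\beta_X(E) = 0$ together with K-semistability makes $E$ a minimizer of $\beta$. By the theory behind the valuative criterion, such a minimizer is dreamy, i.e. induces a special test configuration. The cleaner route, though, is the local-to-global approach via normalized volume that this section is introducing. The key inequality is $(-K_X)^n \le \left(\frac{n+1}{n}\right)^n \widehat{\vol}(x, X)$ for every point $x$, where $\widehat{\vol}(x, X) \le n^n$ with equality iff $x$ is smooth. Proving this inequality amounts to rerunning the argument above with an arbitrary valuation centered at $x$, using $\vol(v)$ in place of $1$.

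With that inequality, equality $V = (n+1)^n$ forces $\widehat{\vol}(x, X) = n^n$ at every point, so $X$ is smooth. To identify $X$, I would use the equality of volumes: $\vol(-K_X - tE) = V - t^n$ up to $t = n+1$ means that every local jet condition imposes independent conditions. In particular, $\vol(-K_X - (n+1)E) = 0$ at the boundary. From this I would extract a family of rational curves through a general point $x$ with $-K_X$-degree $n+1$. By the Cho--Miyaoka--Shepherd-Barron / Kebekus characterization of projective space (all minimal rational curves through a general point having anticanonical degree $n+1$), $X \cong \bP^n$.

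Making this last extraction rigorous is where I expect the real difficulty. I would first try to show directly that the Seshadri constant of $-K_X$ at a general point equals $n+1$, and then apply Liu--Zhuang's Seshadri-constant characterization of $\bP^n$.
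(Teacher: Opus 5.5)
\paragraph{Assessment.} Your proof of the inequality is the paper's argument. You blow up a smooth point, use $A_X(E)=n$, bound $\vol(-K_X-tE)\ge(-K_X)^n-t^n$ by counting the conditions imposed by $\mathfrak{m}_x^{\lceil mt\rceil}$, and integrate. The paper stops there (``We prove only the first statement''). Its only remark on the equality case is Property~\ref{propsofvol}(2). Your step ``equality forces $\widehat{\vol}(x,X)=n^n$ at every point, so $X$ is smooth'' is exactly that remark.

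The genuine gap is the step after that. Smoothness is not the conclusion. You still need that a K-semistable Fano \emph{manifold} with $(-K_X)^n=(n+1)^n$ is isomorphic to $\bP^n$. This is the real content of the equality statement (Fujita's theorem in \cite{Fuj18}), and your proposal does not prove it.
\begin{itemize}
\item ``Extracting a family of rational curves'' from $\vol(-K_X-(n+1)E)=0$ has no mechanism behind it. That vanishing is just the endpoint of the identity $\vol(-K_X-tE)=(n+1)^n-t^n$ and says nothing directly about curves.
\item The Seshadri-constant route you name at the end is the right one, but its key step is missing: that this volume identity forces $\mu^*(-K_X)-tE$ to be nef for all $t<n+1$. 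In dimension $\ge 3$ there is no formal reason a big divisor whose volume equals its top self-intersection should be nef, so this needs an argument.
\end{itemize}

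\paragraph{One way to close the gap.} Put $L=-K_X$ and $D_t=\mu^*L-tE$.
\begin{enumerate}
\item By differentiability of volume (Boucksom--Favre--Jonsson), $\frac{d}{dt}\vol(D_t)=-n\,\vol_{Y|E}(D_t)$ for $t<n+1$.
\item Since $D_t|_E\cong\calO_{\bP^{n-1}}(t)$, the restricted volume is at most $t^{n-1}$. So your identity forces $\vol_{Y|E}(D_t)=t^{n-1}$ for every $t\in(0,n+1)$.
\item Suppose some irreducible curve $\tilde C$ had $D_{t_0}\cdot\tilde C<0$ with $t_0<n+1$. Then $\tilde C\not\subset E$, so it is the strict transform of a curve through $x$. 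It meets $E$, and $D_t\cdot\tilde C<0$ for all $t\in[t_0,n+1)$.
\item For such $t$, $\tilde C$ lies in the diminished base locus of $D_t$. Hence $\sigma_t=\ord_{\tilde C}\|D_t\|>0$ (Ein--Lazarsfeld--Musta\c{t}\u{a}--Nakamaye--Popa).
\item Every section of $mD_t$, restricted to $E$, then vanishes to order $\ge m\sigma_t$ at a point of $\tilde C\cap E$. This gives $\vol_{Y|E}(D_t)<t^{n-1}$, contradicting step 2.
\item Hence $\epsilon(-K_X;x)\ge n+1$ at every point. So every curve $C\ni x$ satisfies $-K_X\cdot C\ge(n+1)\,\mathrm{mult}_xC\ge n+1$.
\item Cho--Miyaoka--Shepherd-Barron (or Kebekus) then gives $X\cong\bP^n$. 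This is the Bauer--Szemberg/Liu--Zhuang criterion you allude to.
\end{enumerate}
Alternatively, after your reduction to the smooth case, simply cite \cite{Fuj18}.
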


\begin{proof}
We prove only the first statement.  Choose a smooth point $x \in X$ and let $Y = \mathrm{Bl}_x X$ be the blow up of the point $x$, with birational morphism $\mu: Y \to X$ and exceptional divisor $E \subset Y$.  

By assumption and the valuative criteria (Theorem \ref{thm:valcriteria}), we must have $\beta(E) \ge 0$, i.e. 
\[ 
A_X(E)(-K_X)^n \ge \int_0^\infty \vol (-K_X-tE) dt .
\]
Furthermore, denote $\mu: Y \to X$ the blow up  of a smooth subvariety $Z \subset X$ of codimension $k$ contained in the smooth locus of $X$ with exceptional divisor $E$. Then, \[ K_Y = \mu^*(K_X) + (k-1)E. \]
So, if we blow up a smooth point on a variety of dimension $n$, 
\[ K_Y = \mu^*(K_X) + (n-1)E \] and therefore
\[ 
A_X(E) = 1 + \mathrm{coeff}_E(K_Y - \mu^*K_X) = 1 + n -1 = n.
\]
To compute $\beta$, we can estimate $\vol(-K_X - tE) := \vol(\mu^*(-K_X) - tE)$.  Assume for simplicity that $t \in \bQ_{\ge 0}$.  Take an integer $m \in \bZ_{\ge 0}$ such that $mt \in \bZ_{\ge 0}$.  Then, by definition,
\[ \vol (\mu^*(-K_X) - tE) = \lim_{m \to \infty} \dfrac{h^0(Y,\calO_Y(m\mu^*(-K_X) - mtE))}{m^n/n!}.
\]
We can estimate the number of global sections: 
\[ \mu_* \calO_Y(m\mu^*(-K_X)-mtE) = \calO_X(-mK_X) \cdot \mathfrak{a}_{mt} 
\]
where $\mathfrak{a}_{mt} := m_x^{mt} = \{ f \in \calO_{x,X} \mid \ord_E(f) \ge mt \}$ is the (power of the) maximal ideal of the point $x$ we blew up.  Therefore, 
\begin{align*}
    h^0(Y,\calO_Y(m\mu^*(-K_X) - mtE)) &= h^0(X, \mu_*\calO_Y(m\mu^*(-K_X) - mtE)) \\
    &= h^0(X,\calO_X(-mK_X) \cdot \mathfrak{a}_{mt} ) \\
    &\ge h^0(X, \calO_X(-mK_X)) - \mathrm{length}(\calO_{x,X}/\mathfrak{a}_{mt}).
\end{align*}
The last inequality comes from the exact sequence 
\[ 0 \to \mathfrak{a}_{mt} \to \calO_X \to \calO_X/{\mathfrak{a}_{mt}} \to 0 \]
twisted by $\calO_X(-mK_X)$ (which is locally free in a neighborhood of $x$, so isomorphic to $\calO_X$ in a neighborhood of $x$, so twisting the third term in the sequence does nothing):
\[ 0 \to \calO_X(-mK_X)\cdot \mathfrak{a}_{mt} \to \calO_X(-mK_X) \to \calO_X/{\mathfrak{a}_{mt}} \to 0 .\]
The dimension of the global sections of the first sheaf is therefore bounded by the difference of the next two.  

This implies that 
\begin{align*}
    \vol (-K_X - tE) &\ge \vol(-K_X) - \lim_{m \to \infty} \dfrac{\mathrm{length}(\calO_{x,X}/\mathfrak{a}_{mt})}{m^n/n!} \\
    &= (-K_X)^n - \lim_{m \to \infty} \dfrac{\mathrm{length}(\calO_{x,X}/\mathfrak{a}_{mt})}{m^n/n!} \\
    &= (-K_X)^n - \lim_{mt\to \infty} \dfrac{\mathrm{length}(\calO_{x,X}/\mathfrak{a}_{mt})}{m^nt^n/n!}\cdot t^n \\
    &= (-K_X)^n - \vol(\ord_E) \cdot t^n \\
    &= (-K_X)^n - t^n.
\end{align*}

We will encounter volumes of valuations (the term $\vol(\ord_E)$ in the previous equation) momentarily, but you can also compute the last few lines as an exercise: let $k = mt$, and prove that $\lim_{mt \to \infty} \dfrac{\mathrm{length}(\calO_{x,X}/\mathfrak{a}_{mt})}{m^nt^n/n!} = 1$ (see the exercises).

Finally, plugging this into the inequality from the $\beta$-invariant (which we know holds if $X$ is K-semistable):
\[ 
A_X(E)(-K_X)^n \ge \int_0^\infty \vol(-K_X-tE) dt ,
\]
we find that 
\[ n (-K_X)^n \ge \int_0^\infty \mathrm{max}\{(-K_X)^n - t^n, 0\} dt\]
so 
\[ n(-K_X)^n \ge \dfrac{n}{n+1}(-K_X)^{n} \sqrt[n]{(-K_X)^n} \]
or 
\[ (-K_X)^n \le (n+1)^n.
\]
\end{proof}

We can strengthen this inequality with a tool called the \textit{normalized volume}.  This is defined in terms of valuations, but for a new learner of the subject, you can think about divisors whenever you see valuations: divisors correspond to so-called divisorial valuations by taking a divisor $E$ to the valuation $\ord_E$.  

\begin{definition}[\cite{ELS}]
Let $x \in X = \mathrm{Spec} R$ be a klt singularity and $v\in \mathrm{Val}_{x,X}$ be a valuation centered at $x$.  The \textbf{volume} of $v$ is 
    \[ \vol(v) = \lim_{k \to \infty} \frac{\mathrm{length}(R/\mathfrak{a}_k)}{k^n/n!} \]
    where $\mathfrak{a}_k = \{ f \mid v(f) \ge k \}$.
\end{definition}

There is also a definition of log discrepancy $A_X(v)$ for general valuations due to Jonsson and Musta\c{t}\u{a} \cite{JM} which we will not discuss in detail.  It coincides with the log discrepancy $A_{X}(E)$ for divisorial valuations.
 With these ingredients, we can define Li's normalized volume.

\begin{definition}[\cite{Li18}]
With the above set up, the \textbf{normalized volume} is 
\[ \widehat{\vol}(v) : = A_X(v)^n \cdot \vol (v) \]
and the \textbf{local volume} at $x$ is 
\[ \widehat{\vol}(x,X) : = \inf_{v\in \mathrm{Val}_{x,X}} \widehat{\vol}(v).\]
\end{definition}

\begin{remark} 
If $V$ is a $\bQ$-Fano variety, let $X = C(V,-rK_V)$ be the cone over $V$ and and $x\in X$ the vertex of the cone.  Because $V$ is Fano, $x\in X$ is klt, and $X$ has a partial resolution $\mu: Y \to X$ by blowing up the vertex with exceptional divisor $V_0 \cong V \subset Y$.  Another definition of K-semistability of $V$ is that \[ V_0 \text{ is a minimizer of } \widehat{\vol}(x,X) .\]  So, this notion of normalized volume also captures the stability. 
\end{remark} 

Using the normalized volume, we have a \textit{Local to Global} Theorem on the volume of K-semistable varieties.

\begin{theorem}[\cite{LL19}]\label{thm:localtoglobal}
Let $X$ be a K-semistable $\bQ$ Fano variety.  Then, for any $x \in X$, 
\[ (-K_X)^n \le \left( 1 + \frac{1}{n} \right)^n \widehat{\vol}(x,X).\]
\end{theorem}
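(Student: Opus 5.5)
The plan is to run the proof of Theorem \ref{thm:volume1} again, this time at an arbitrary point $x \in X$ and with an arbitrary valuation centered at $x$ in place of the exceptional divisor of $\mathrm{Bl}_x X$. Since $\widehat{\vol}(x,X)$ is an infimum over valuations, and the relevant invariants are continuous or approximable by divisorial ones, it suffices to fix a divisorial valuation $v = \ord_E$ with center $C_X(E) = \{x\}$ and prove
\[ (-K_X)^n \le \left(1+\tfrac{1}{n}\right)^n A_X(E)^n \vol(\ord_E).\]
The general case follows by passing to the infimum. For this I would use the standard fact that the infimum of $\widehat{\vol}$ over all valuations equals the infimum over divisorial (even quasi-monomial) ones, which I take as known from \cite{Li18}.

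\textbf{Step 1 (valuative input).} By Theorem \ref{thm:valcriteria}, K-semistability gives $\beta_X(E) \ge 0$, that is,
\[ A_X(E)\,(-K_X)^n \ge \int_0^\infty \vol(-K_X - tE)\,dt.\]

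\textbf{Step 2 (lower bound on volumes).} Exactly as in the proof of Theorem \ref{thm:volume1}, set $\mathfrak{a}_k = \{f : \ord_E(f) \ge k\}$, an $\mathfrak{m}_x$-primary ideal because $E$ is centered at the point $x$. Then $\mu_*\calO_Y(-m\mu^*K_X - mtE) = \calO_X(-mK_X)\cdot \mathfrak{a}_{mt}$. Choose $r$ with $-rK_X$ Cartier and restrict to $m$ divisible by $r$, so that this sheaf is locally free near $x$. The same exact sequence then gives
\[ h^0(X, \calO_X(-mK_X)\cdot\mathfrak{a}_{mt}) \ge h^0(X,\calO_X(-mK_X)) - \mathrm{length}(\calO_{X,x}/\mathfrak{a}_{mt}). \]
Dividing by $m^n/n!$ and letting $m\to\infty$ yields
\[ \vol(-K_X - tE) \ge (-K_X)^n - \vol(\ord_E)\, t^n. \]
The only new point compared with the smooth case is that $\vol(\ord_E)$ is no longer $1$. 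It is still a genuine limit, by \cite{ELS}, which is what allows the rescaling $k = mt$.

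\textbf{Step 3 (integrate).} Write $V = (-K_X)^n$ and $b = \vol(\ord_E)$. Integrating $\max\{V - bt^n, 0\}$ from $0$ to $t_0 = (V/b)^{1/n}$ gives $\frac{n}{n+1} V t_0$. Combining with Step 1:
\[ A_X(E)\, V \ge \frac{n}{n+1} V \left(\frac{V}{b}\right)^{1/n}. \]
Rearranging gives $V^{1/n} \le \frac{n+1}{n} A_X(E)\, b^{1/n}$, and raising to the $n$th power gives the claim for $E$.

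\textbf{Main obstacle.} The delicate step is the reduction from all valuations in $\mathrm{Val}_{x,X}$ to divisorial ones centered at $x$. Steps 1 and 2 are formal, and I would invoke the approximation result for $\widehat{\vol}$ rather than reprove it. A secondary technical point is making Step 2 rigorous when $x$ is a singular point, where $K_X$ is only $\bQ$-Cartier; restricting to $m$ divisible by the Cartier index handles this, and the volume is unaffected.
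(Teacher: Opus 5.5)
Your proposal is correct and is essentially the paper's proof: you rerun the argument of Theorem \ref{thm:volume1} with a divisor $E$ centered at $x$, keep $A_X(E)$ and $\vol(\ord_E)$ in place of $n$ and $1$, and then pass to the infimum defining $\widehat{\vol}(x,X)$. The two points you make explicit are exactly what the paper's one-line proof leaves implicit: restricting to $m$ divisible by the Cartier index of $K_X$, and reducing the infimum over $\mathrm{Val}_{x,X}$ to divisorial valuations (this follows, for instance, from Liu's description of $\widehat{\vol}(x,X)$ as an infimum of $\lct^n\cdot e$ over $\mathfrak{m}_x$-primary ideals).
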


\begin{proof}
The proof is the same as the proof of Theorem \ref{thm:volume1}, keeping $\vol(\ord_E)$ and $A_X(E)$ as in their definitions (without replacing them by $1$ and $n$).
\end{proof}

This is a very \textit{powerful} result, called a local-to-global theorem, because it relates the local invariants of the singularities (the normalized volume) to a global invariant (the anticanonical volume).  So, it allows you to constrain what singularities can appear relative to the anticanonical volume and vice versa.  We will use this to our advantage later.

Here are some properties of the local volume function $\widehat{\vol}(x,X)$: 

\begin{property}\label{propsofvol}
\begin{enumerate}
    \item \cite{dFEM,Li18} If $X$ has dimension $n$ and $x \in X$ is smooth, then 
    \[ \widehat{\vol}(x,X) = n^n \]
    \item \cite{LXcubics} If $X$ has dimension $n$, then for any $x \in X$, \[ \widehat{\vol}(x,X) \le n^n \] and equality holds if and only if $x$ is smooth.  Combining this with Theorem \ref{thm:localtoglobal} gives Theorem \ref{thm:volume1}.
    \item \cite{Liu18} If $x \in X = (0 \in \mathbb{A}^n/G)$ is a quotient singularity where $G \subset \GL_n(\bC)$ acts freely in codimension 1, then \[\widehat{\vol}(x,X) = \frac{n^n}{|G|}.\] If $x \in X$ is a quotient of an arbitrary variety by $G \subset \GL_n(\bC)$, then \[\widehat{\vol}(x,X) \le \frac{n^n}{|G|}.\] Combining this with Theorem \ref{thm:localtoglobal} gives: if $X$ is a K-semistable Fano variety, then for any quotient singularity $x \in X = (0 \in \mathbb{A}^n/G)$, 
    \[ (-K_X)^n \le \frac{(n+1)^n}{|G|}. \]
    \item \cite{LXcubics} If $X$ has dimension $n = 2 $ or $n = 3$ and $x\in X$ is \textbf{not} a smooth point, then 
    \[ \widehat{\vol}(x,X) \le 2(n-1)^n \] and equality holds if and only if $x$ is an ordinary double point.  Conjecturally, the ordinary double point \textit{always} gives the second largest volume. 
\end{enumerate}
\end{property}

\begin{definition}
    The singularity $\frac{1}{n}(a,b)$ is the surface quotient singularity obtained by the action $\mathbb{A}^2/\mu_n$, where a primitive root of unity $\zeta_n$ acts by $\zeta_n \cdot (x,y) = (\zeta_n^a x, \zeta_n^b y)$.  
\end{definition}

\begin{example}
Previously, you proved in the exercises that $\bP(1,1,2)$ is K-unstable using the $\beta$-invariant.  Let's prove it again using the normalized volume.  

By Property \ref{propsofvol}(3), we know that if $X$ is a K-semistable Fano variety with a quotient singularity $\mathbb{A}^n/|G|$, then $(-K_X)^n \le \frac{(n+1)^n}{|G|}$.  Let's plug in the associated values for $\bP(1,1,2)$: 
    \begin{itemize}
        \item $n = 2$ (the dimension of $X$)
        \item Because $X$ is a (singular) quadric surface in $\bP^3$, by adjunction, $(K_{\bP^3}+X)|_X = K_X$, so $\calO_X(K_X) = (\calO_{\bP^3}(-2))|_X$, and therefore \[ K_X^2 = (\calO_{\bP^3}(-2)|_X)^2 = \calO_{\bP^3}(-2) \cdot \calO_{\bP^3}(-2) \cdot \calO_{\bP^3}(2) = 8.   \]

        Alternatively, you could compute $(-K_X)^2$ using intersection theory on weighted projective space: $\calO(K_X) = \calO(-1-1-2) = \calO(-4)$, and $(-K_X)^2 = \frac{(-4)^2}{2} = 8$.

        \item The singularity on $\bP(1,1,2)$ can be described as the quotient singularity $\frac{1}{2}(1,1)$.  So, $|G| = |\mu_2| = 2$.
    \end{itemize}

Now we plug in!  We see that \[ (-K_X)^2 = 8 > \frac{3^2}{2} = \frac{9}{2} \] so $X$ is K-unstable. 
\end{example}

See the exercises for practice with quotient singularities.

\subsection{K-moduli of cubic surfaces}

Now, let's use the normalized volume to do some moduli!  In a moduli problem, we wish to classify the objects that can appear.  Suppose $X$ is a K-semistable object in some moduli space.  Theorem \ref{thm:localtoglobal} can often be used to give a bound on the index of $-K_X$, which is related to the singularities that can appear on $X$, and the various reformulations can give more precise statements.

    Consider a smooth cubic surface $X$ in $\bP^3$.  By adjunction, $\calO(K_X) = \calO_{\bP^3}(-1)|_X$, so $X$ is Fano and $(-K_X)^2 = 3$.  In other words, cubic surfaces are examples of degree three del Pezzo surfaces.

\begin{question}
    What does the moduli space $\calM^{\mathrm{sm}}_{2,3}$ of K-(semi/poly)stable degree three del Pezzo surfaces look like?  (Here, the superscript $\mathrm{sm}$ indicates that we are only looking at \textit{smoothable} surfaces.  In fact, there are no other components as was proven by this Bootcamp group, which will appear in forthcoming work.)
\end{question}

From the exercises, every smooth del Pezzo surface of degree 3 is a cubic surface.  What about the singular ones?  Suppose $X$ is a K-semistable singular del Pezzo surface, and let $x \in X$ be a singular point.  We know $X$ is normal and log terminal by the K-semistable assumption.  As, log terminal surface singularities are all quotient singularities, we can use Property \ref{propsofvol}(3) to bound the normalized volume.  Write $(x \in X) = (0 \in \mathbb{A}^2/G)$.  

Theorem \ref{thm:localtoglobal} together with Property \ref{propsofvol}(3) says \[(-K_X)^2 \le \frac{9}{|G|}. \]
We know $(-K_X)^2 = 3$, and we are assuming $x \in X$ is not smooth (so $|G| > 1$) so this implies that 
\[ 2 \le |G| \le 3 .\]

In other words, $|G| = 2$ or $|G| = 3$.  There are only three choices for the resulting singularity $x \in X$: 
    \begin{enumerate}
        \item $G = \mu_2$ and $x \in X$ is an $A_1$ (or $\frac{1}{2}(1,1)$) singularity, which is the quotient $\mathbb{A}^2/{\mu}_2$ where $\mu_2$ acts by $-1 \cdot (x,y) = (-x, -y)$
        \item $G = \mu_3$ and $x \in X$ is an $A_2$ (or $\frac{1}{3}(1,2)$) singularity, which is the quotient $\mathbb{A}^2/\mu_3$ where a cube root of unity $\zeta_3 \in \mu_3$ acts by $\zeta_3 \cdot (x,y) = (\zeta_3 x, \zeta_3^{-1} y)$
        \item $G = \mu_3$ and $x \in X$ is a $\frac{1}{3}(1,1)$ singularity, which is the quotient $\mathbb{A}^2/\mu_3$ where a cube root of unity $\zeta_3 \in \mu_3$ acts by $\zeta_3 \cdot (x,y) = (\zeta_3 x, \zeta_3 y)$
    \end{enumerate}

But, by the classification of smoothable log terminal surface singularities (e.g. \cite[\S 3]{KSB88} or \cite[\S 6.6]{Kol23}), the third choice in the list is \textbf{not} smoothable.  So, $x \in X$ must be an $A_1$ or $A_2$ singularity.  By Exercise \ref{ex:Gorenstein}, we know that $A_n$ singularities are Gorenstein, so any K-semistable del Pezzo surface of degree 3 $X$ is Gorenstein, so $-K_X$ is Cartier.  In fact, once we know it is Cartier, it is very ample by a result of Fujita (this is true for cubics in any dimension--see \cite{Fuj90}) so $|-K_X|: X \hookrightarrow \bP^3$ as a (singular) cubic surface.  

So far, we have shown: 

\begin{theorem}
    If $[X] \in \calM^{\mathrm{sm}}_{2,3}$ is a K-semistable $\bQ$ Fano surface of degree 3, then $X$ is a cubic surface in $\bP^3$ with at worst $A_1$ or $A_2$ singularities. 
\end{theorem}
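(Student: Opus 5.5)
The plan is to assemble the argument sketched just before the statement into a proof with four steps: restrict the singularities, identify them, show $-K_X$ is Cartier, and show it is very ample. Let $[X] \in \calM^{\mathrm{sm}}_{2,3}$. Then $X$ is a K-semistable $\bQ$-Fano surface with $(-K_X)^2 = 3$, and it admits a $\bQ$-Gorenstein smoothing to a smooth del Pezzo surface of degree $3$.

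\emph{Step 1: bound the local groups.} By Theorem \ref{thm:klt}, $X$ is klt. In dimension two, klt singularities are quotient singularities. So for each singular point we can write $(x \in X) \cong (0 \in \bA^2/G)$, where $G \subset \GL_2(\bC)$ is a finite group containing no pseudoreflections. Theorem \ref{thm:localtoglobal} and Property \ref{propsofvol}(3) then give
\[
3 = (-K_X)^2 \le \frac{9}{|G|},
\]
hence $|G| \le 3$. Since $x$ is singular, $|G| \ge 2$. A group of order $2$ or $3$ is cyclic. Up to conjugacy, the small cyclic subgroups of $\GL_2$ of these orders give exactly three singularities: $\frac{1}{2}(1,1)$, $\frac{1}{3}(1,2)$ and $\frac{1}{3}(1,1)$.

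\emph{Step 2: use smoothability.} A $\bQ$-Gorenstein smoothing of $X$ restricts to a local $\bQ$-Gorenstein smoothing of each singularity. The singularity $\frac{1}{3}(1,1)$ is a cone over the rational normal cubic. It is not of class $T$ (it is not of the form $\frac{1}{dn^2}(1,dna-1)$), so by \cite[\S 3]{KSB88} it admits no $\bQ$-Gorenstein smoothing. Therefore every singular point of $X$ is an $A_1$ or $A_2$ point. Both are Du Val and hence Gorenstein (Exercise \ref{ex:Gorenstein}). Consequently $K_X$ is Cartier, and $X$ is a Gorenstein del Pezzo surface of degree $3$ with only these singularities.

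\emph{Step 3: embed $X$ as a cubic.} We need $-K_X$ to be very ample and $h^0(-K_X) = 4$.
\begin{itemize}
\item Since $X$ has rational singularities and $-K_X$ is ample, Kawamata--Viehweg vanishing gives $H^i(X,-K_X) = 0$ for $i>0$. Riemann--Roch then gives $h^0(-K_X) = \chi(\calO_X) + (-K_X)^2 = 1 + 3 = 4$.
\item Equivalently, $h^0$ is constant in the smoothing family by upper semicontinuity together with the vanishing, and on the smooth fibres it equals $4$.
\item Very ampleness follows from Fujita's theory of polarized varieties with small $\Delta$-genus \cite{Fuj90}. The $\Delta$-genus here is $\dim X + (-K_X)^2 - h^0(-K_X) = 2 + 3 - 4 = 1$, and for $\Delta$-genus $1$ with degree at least $3$ the line bundle is very ample. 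Alternatively, one can cite the classical result that the anticanonical system of a Gorenstein del Pezzo surface of degree $d \ge 3$ is very ample.
\end{itemize}
The image of $|-K_X|$ is then a surface of degree $(-K_X)^2 = 3$ in $\bP^3$, that is, a cubic surface, whose singularities are the $A_1$ and $A_2$ points found above.

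The step I expect to be the main obstacle is Step 2, ruling out $\frac{1}{3}(1,1)$. I would justify it by the KSB classification: a $\bQ$-Gorenstein smoothable cyclic quotient singularity must be of class $T$, and $\frac{1}{3}(1,1)$ is not. Another route is a direct check that the index-one cover $\bA^2 \to \bA^2/\mu_3$ has no deformations that are equivariantly smoothing, while the Gorenstein cone over the twisted cubic (a non-$\bQ$-Gorenstein smoothing) does not preserve $K^2$. A secondary point to check is that very ampleness in Step 3 really holds on singular fibres. This follows since Fujita's result only needs the surface to be Gorenstein with ample anticanonical class.
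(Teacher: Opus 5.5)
Your proof is correct and follows the same route as the paper. Klt surface singularities are quotient singularities, the local-to-global bound with $\widehat{\vol}=4/|G|$ forces $|G|\le 3$, the KSB classification of $\bQ$-Gorenstein smoothable (class $T$) singularities excludes $\frac{1}{3}(1,1)$, and then $-K_X$ is Cartier and very ample by Fujita; your added detail ($\bQ$-Gorenstein rather than mere smoothability, $h^0(-K_X)=4$, the $\Delta$-genus computation) is welcome.

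The only slip is in your optional aside. The cone over the twisted cubic \emph{is} the $\frac{1}{3}(1,1)$ point itself and is not Gorenstein. The correct statement is that it is smoothable but not $\bQ$-Gorenstein smoothable: its $\bQ$-Gorenstein deformations are equivariant deformations of the smooth index-one cover $\bA^2$, hence trivial.
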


Now, we know that any element parameterized by $\calM^{\mathrm{sm}}_{2,3}$ is really just a surface in $\bP^3$.  To determine the K-stability of such a surface, does that mean we are allowed to restrict to test configurations where the central fiber is also in $\bP^3$?  In other words, can we consider only one-parameter subgroups of $\PGL_4$ in the test configuration definition?  

Depending on your background, this \textit{might} be ringing some sort of bell.  If we have objects in $\bP^n$, and degenerate along one-parameter subgroups of $\PGL_{n+1}$, and compute some sort of weight of this action.... This looks just like GIT!   

\begin{theorem}[\cite{OSS}]\label{cubics}
    GIT = K stability for cubic surfaces.  A cubic surface $X$ is K-(poly/semi) stable if and only if it is GIT (poly/semi) stable.  It follows that the K-moduli space is isomorphic to the GIT moduli space.
\end{theorem}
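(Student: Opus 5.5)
The plan is to compare the two stability notions on the two levels the paper has set up: the K-moduli stack $\calM^{\mathrm{sm}}_{2,3}$, and the GIT quotient $\bP(H^0(\bP^3,\calO(3)))/\!\!/\PGL_4$. The classical GIT description (Hilbert--Mumford) says that a cubic surface is GIT stable if and only if it has at worst $A_1$ singularities. It is GIT semistable if and only if it has at worst $A_1$ or $A_2$ singularities. The polystable strictly semistable point is the unique $3A_2$ cubic $xyz=w^3$. By the preceding theorem, every K-semistable smoothable degree 3 del Pezzo surface is a cubic surface with at worst $A_1$ or $A_2$ singularities, hence is GIT semistable. So the K-semistable locus is contained in the GIT semistable locus $U^{ss}\subset \bP^{19}$, and $\calM^{\mathrm{sm}}_{2,3}$ maps to $[U^{ss}/\PGL_4]$. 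This map is an open immersion: both stacks parametrize $\bQ$-Gorenstein families of cubic surfaces, and $-K_X$ embeds $X$ in $\bP^3$ canonically up to $\PGL_4$.

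The next step is to show that every GIT semistable cubic is K-semistable. First, smooth cubics are K-semistable, and in fact K-stable by Corollary \ref{cor:deg4} or the Abban--Zhuang example above. Next, the cubic $xyz=w^3$ is K-polystable by the equivariant $\alpha$-invariant criterion, using its torus $T^2$ together with the $S_3$ symmetry. Now let $X$ be any GIT semistable cubic. Include it in a one-parameter family $\calX\to C$ of cubics whose general fiber is smooth. Since the K-moduli stack is proper (the K-moduli theorem), after a finite base change there is a K-semistable filling $X'_0$ of the smooth family. By the preceding theorem $X'_0$ is again a cubic with $A_1/A_2$ singularities, so it is GIT semistable.

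Both $X$ and $X'_0$ are then GIT semistable limits of the same family. Separatedness of the GIT good moduli space implies that they have the same image point, i.e.\ they degenerate to a common GIT polystable cubic $X_{ps}$. Applying the same reasoning to K-moduli, the K-polystable degeneration of $X'_0$ is also a cubic in this orbit closure. This lets me conclude that the good moduli spaces map by a morphism $M^K\to M^{GIT}$ that is injective and proper. Both spaces are normal and irreducible: $M^{GIT}$ is a weighted projective space, and $M^K$ is the closure of the smooth locus. Both have dimension $4$, so Zariski's main theorem makes this map an isomorphism.

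The isomorphism then upgrades to a statement about stacks. Because the open immersion of stacks induces an isomorphism of good moduli spaces, its image is saturated and contains all closed points, so it is all of $[U^{ss}/\PGL_4]$. This gives semistability equivalence. Polystability corresponds on both sides to closed orbits in the fibers of the good moduli map, and stability corresponds to finite stabilizers, using the remark on \cite{BX19}. The main obstacle I expect is the step identifying the filling. I need to know that K-semistable degenerations of cubics stay inside the cubic-with-$A_1/A_2$ locus, so that the properness of K-moduli can be compared with the GIT quotient; this rests entirely on the local-to-global volume bound via Theorem \ref{thm:localtoglobal}. If instead I had to verify K-semistability of each $A_2$ cubic directly, I would compute $\delta$ at the $A_2$ point by Abban--Zhuang, using the $(1,2)$-weighted blowup. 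I expect this to be delicate.
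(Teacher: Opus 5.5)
\paragraph{The gap: polystability.} Using the classical Hilbert--Mumford classification in place of the paper's Step 0 is legitimate, but it only gives K-semistable $\Rightarrow$ GIT semistable. (The paper's Step 0 is Paul--Tian: the Futaki invariant of the test configuration induced by a one-parameter subgroup of $\PGL_4$ is a positive multiple of the GIT weight.) Nowhere do you show that a K-polystable cubic is GIT polystable, i.e.\ that the open immersion $\calM^{\mathrm{sm}}_{2,3}\hookrightarrow[U^{ss}/\PGL_4]$ sends closed points to closed points. Both of your key assertions silently depend on this.
\begin{itemize}
\item Your filling argument only proves that every point of $M^{GIT}$ is the image of some K-polystable cubic, i.e.\ surjectivity of $M^K\to M^{GIT}$. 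Injectivity, i.e.\ that two non-isomorphic K-polystable cubics are never S-equivalent, is asserted but never proved.
\item The principle in your last paragraph, that an open immersion of stacks inducing an isomorphism of good moduli spaces must be surjective, is false without the closed-point condition. For instance, $[(\bA^1\setminus 0)/\bG_m]\subset[\bA^1/\bG_m]$ induces the identity on good moduli spaces (both a point). Also $[(\bA^2\setminus 0)/\bG_m]\subset[\bA^2/\bG_m]$ induces $\bP^1\to\mathrm{pt}$, so neither injectivity nor saturation comes for free.
\end{itemize}
Concretely, knowing that a GIT polystable $X$ is S-equivalent to some K-polystable $X'$ only tells you that $X'$ specializes isotrivially to $X$. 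K-semistability is not preserved under such specializations: $\bP^2$ specializes to the K-unstable $\bP(1,1,4)$.

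\paragraph{How to close it.} The missing idea is exactly K-polystable $\Rightarrow$ GIT polystable. Once you have it, take the filling $X'$ to be K-polystable, as the paper does; it is then GIT polystable. For GIT polystable $X$, separatedness of the GIT quotient gives $X\cong X'$ directly, which is the paper's proof, and the Zariski main theorem detour becomes unnecessary. You can obtain the missing implication from Paul--Tian, or from ingredients you already have:
\begin{itemize}
\item Suppose $X$ is K-polystable but strictly GIT semistable. The one-parameter subgroup degenerating $X$ to $xyz=w^3$ is a special test configuration whose central fiber is K-semistable by your $\alpha_G$ computation.
\item Its Futaki invariant equals that of the product test configuration of the central fiber given by the induced action. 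Reversing the action negates it, and K-semistability of the central fiber makes both values $\ge 0$, so it is $0$.
\item K-polystability then forces $X\cong\{xyz=w^3\}$.
\end{itemize}
With this in hand, GIT stable cubics are K-semistable because the GIT stable locus is saturated, so your $X'_0$ lies in the orbit of $X$. Strictly semistable cubics are K-semistable by openness of K-semistability along their degeneration to $xyz=w^3$. So the real obstacle is not the filling step you flagged, which the local-to-global bound already settles, but this polystable comparison. A minor point: normality of $M^K$ follows from the open immersion into the smooth stack $[U^{ss}/\PGL_4]$, not from $M^K$ being the closure of the smooth locus.
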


\begin{proof} (Sketch.)
\textbf{Step 0: K stability $\implies$ GIT stability.}

First, we show K $\implies$ GIT (this is a general idea due to Paul and Tian \cite{PT06} for hypersurfaces).  \textit{Basic idea: one parameter subgroups are test configurations, so if all of the test configurations have positive weight, then so should all the one-parameter subgroups.}

By assumption, if $X$ is K-(semi)stable, we have $\Fut(\mathcal{X}, \mathcal{L}) (\ge) > 0 $ for any test configuration.  And, we proved it is a hypersurface $X \subset \bP^3$, so given any one-parameter subgroup $\lambda \subset \PGL_4$, this induces a test configuration $(\calX_\lambda, \calL_\lambda)$.  

Paul and Tian \cite{PT06} show that that the Futaki invariant is proportional to the GIT weight, i.e. \[ \Fut(\calX_\lambda, \calL_\lambda) = a \mu^{\mathcal{O}(1)}([X], \lambda) \]
where $a > 0$ is a positive constant and $\mu^{\mathcal{O}(1)}$ is the GIT weight.  Therefore, K-semistability of $X$ implies that the GIT weight is $\ge 0$ for every one-parameter subgroup, hence the Hilbert-Mumford criterion implies that $X$ is GIT-semistable. 

Now, we  want to show GIT stability $\implies$ K stability: 
Suppose $X \subset \bP^3$ is GIT polystable (the other cases are similar).
We want to show that $X$ is K-polystable.  

\textbf{Step 1: Openness of K-moduli.}  There exists a K-stable cubic surface (see the exercises, or use \S \ref{s:AZ}) and the K-stable locus is Zariski open, so the general one is K-stable.

\textbf{Step 2: Properness of K-moduli.} Take a smoothing $\mathcal{X} \to C$ over a pointed curve $0 \in C$ such that $\calX_0 \cong X$ is the cubic surface we assume to be GIT polystable.  The general fiber $\calX_t$ is a smooth cubic surface and, from the previous step, we can assume $\calX_t$ is K-stable.  By properness of K-moduli, up to base change, there exits a family $\calX' \to C$ such that $\calX' \setminus \calX'_0 \cong \calX \setminus \calX_0$ and $X' : = \calX'_0$ is K-polystable.  

\textbf{Step 3: Local to Global Volume Comparison.} 
From our work already using Theorem \ref{thm:localtoglobal}, because $\calX_0'$ is K-polystable, it is a cubic surface.  By Step 0, because K-polystability implies GIT-polystability, $\calX_0'$ is a GIT polystable surface.  But now, $X = \calX_0$ and $\calX_0'$ are two polystable limits of the same family of surfaces, so by separatedness of the GIT moduli space, we must have $X \cong \calX_0'$.  Therefore, $X$ is K-polystable.
\end{proof}

\begin{corollary}
    Because all smooth cubic surfaces are GIT stable, this implies that all smooth cubics are K stable. 
\end{corollary}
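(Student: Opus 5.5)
The plan is to combine Theorem \ref{cubics} with the classical GIT analysis of cubic surfaces due to Hilbert and Mumford. The corollary then reduces to a purely GIT statement: a smooth cubic surface $X = (f=0) \subset \bP^3$ is GIT stable for the $\PGL_4$ (equivalently $\mathrm{SL}_4$) action on $\bP(H^0(\bP^3,\calO(3)))$. Once this holds, Theorem \ref{cubics} gives that $X$ is K-stable. Theorem \ref{cubics} is stated for poly/semistability, and the proof sketch covers the stable case ``similarly.'' If one prefers to invoke only polystability, there is an alternative route. GIT stability of a smooth cubic gives GIT polystability, hence K-polystability. The automorphism group of a smooth cubic surface is finite, being a subgroup of $W(E_6)$ acting on the 27 lines. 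By the remark following Definition \ref{def:stability} (from \cite{BX19}), K-polystable with finite $\Aut$ implies K-stable.

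To prove GIT stability I will use the Hilbert--Mumford criterion. For every nontrivial one-parameter subgroup $\lambda$, I must show that the weight $\mu^{\calO(1)}([X],\lambda)$ is positive, with the sign convention of Step 0. After a change of coordinates, $\lambda$ acts diagonally with weights $r_0 \ge r_1 \ge r_2 \ge r_3$, $\sum r_i = 0$, not all zero. Instability or strict semistability means that every monomial $x^a$ occurring in $f$ satisfies $\sum a_i r_i \ge 0$ (for the appropriate sign). The standard approach is to reduce to finitely many extremal weight vectors, namely the vertices of the relevant cone, which are of the form $(3,1,-1,-3)$, $(1,1,1,-3)$, $(1,0,0,-1)$, $(1,1,-1,-1)$, $(3,-1,-1,-1)$ and their relatives. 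For each vertex I will list the monomials of nonpositive weight that may appear in $f$.

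The main step, and the expected obstacle, is the case check. For each maximal non-stable monomial set, I must show that any cubic whose monomials lie in that set is singular. This is done by exhibiting an explicit singular point, typically the coordinate point $[0:0:0:1]$ or a point on a coordinate line, where all partials vanish. For example, with weights $(1,1,1,-3)$, the non-stable condition forces every monomial to have degree at most $1$ in $x_3$. Then $f$ contains no $x_3^3$ or $x_3^2x_i$ terms, so $[0:0:0:1]$ is a singular point. The remaining vertices are handled the same way, producing at worst $A_1$ or $A_2$ singularities (semistable) or worse (unstable). The outcome is Mumford's classical theorem: a cubic is GIT stable iff it has at worst $A_1$ singularities, and in particular every smooth cubic is stable.

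Finally, I will combine the pieces: smooth $\Rightarrow$ GIT stable $\Rightarrow$ K-stable by Theorem \ref{cubics}. Since the paper already identifies K-semistable degree $3$ surfaces as cubics with $A_1$/$A_2$ singularities, no further boundedness input is needed.
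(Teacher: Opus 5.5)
Your top-level argument is the paper's. The corollary is just Theorem \ref{cubics} applied to the classical Hilbert--Mumford fact that smooth cubic surfaces are GIT stable, which the statement itself takes as a premise. Your fallback is also sound: GIT stable implies GIT polystable, hence K-polystable, and then finiteness of $\Aut(X)\subset W(E_6)$ together with the \cite{BX19} remark gives K-stability. This is a careful way to cover the fact that Theorem \ref{cubics} is only stated for the poly/semistable cases.

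The Hilbert--Mumford verification you add is not needed. As sketched, its sample computation is wrong.
\begin{itemize}
\item For weights $(1,1,1,-3)$, the monomial $x^a$ has weight $a_0+a_1+a_2-3a_3 = 3-4a_3$. Nonnegativity of all weights forces $a_3=0$, so the cubic is a cone with vertex $[0:0:0:1]$. Nonpositivity forces $a_3\ge 1$, so the cubic contains the plane $x_3=0$.
\item Neither sign convention gives ``degree at most $1$ in $x_3$.'' No one-parameter subgroup can have that set as its non-stable locus, because it contains cubics with an ordinary node at $[0:0:0:1]$, and these are GIT stable.
\item The phrase ``producing at worst $A_1$ or $A_2$ singularities'' is also off. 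What the case check must show is that every cubic in each maximal non-stable monomial set is singular; in fact each has a singular point worse than $A_1$.
\end{itemize}
Either cite Mumford's theorem, as the paper implicitly does, or redo that case analysis with correctly computed weights.
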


Using the index bound from the normalized volume, a similar result is true in higher dimensions: 

\begin{theorem}[\cite{LXcubics,LiuQuartics}]
    GIT = K stability for cubic threefolds and cubic fourfolds. 
\end{theorem}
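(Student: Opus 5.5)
We plan to follow the same three-step template as in the proof of Theorem \ref{cubics}, replacing the surface-specific classification of singularities by the higher-dimensional local-to-global input. Let $X$ be a K-semistable $\bQ$-Fano limit of smooth cubic $n$-folds, with $n = 3$ or $4$. Such an $X$ has volume $(-K_X)^n = 3(n-1)^n$.

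The first and key step is to show that every such $X$ is again a cubic hypersurface in $\bP^{n+1}$. Theorem \ref{thm:localtoglobal} gives, for every $x \in X$,
\[ \widehat{\vol}(x,X) \ge \left(\tfrac{n}{n+1}\right)^n 3(n-1)^n . \]
This lower bound is large. For $n=3$ it equals $81/64\cdot 8\cdot 3/... $; concretely, it is $3\cdot 8\cdot 27/64 = 81/8$.

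We then argue that $-K_X$ is Cartier and divisible by $n-1$. Let $\pi:\tilde{x}\to x$ be the index-one cover (quasi-étale of degree equal to the Cartier index, or to the index of the divisor $L$ with $(n-1)L \sim -K_X$ in the class group). Since $\widehat{\vol}$ is multiplicative under quasi-étale covers, the analogue of Property \ref{propsofvol}(3), together with Property \ref{propsofvol}(2) ($\widehat{\vol} \le n^n$), gives
\[ |G| \le n^n / \widehat{\vol}(x,X) < 2 . \]
Here $n^n/(81/8) = 216/81 < 3$ for $n=3$. To force $|G|=1$ we would instead use the sharper bound from Property \ref{propsofvol}(4): on a non-smooth cover $\widehat{\vol} \le 2(n-1)^n$, or the conjectural analogue in dimension 4, as established in \cite{LiuQuartics}. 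Hence $X$ is Gorenstein with $-K_X = (n-1)H$ for a Cartier $H$ satisfying $H^n = 3$.

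Fujita's classification of del Pezzo varieties of degree 3 \cite{Fuj90} then shows that $H$ is very ample and embeds $X$ as a cubic hypersurface in $\bP^{n+1}$. This step, the index bound, is the main obstacle: in dimension 4 the needed volume inequality for Gorenstein non-smooth points is exactly the hard input. I would try first to bound the local volume of non-$A_1$ points by degenerating to their tangent cones (Li–Xu's finite degree formula).

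Once every K-semistable limit is known to be a cubic, the remaining argument is identical to the proof of Theorem \ref{cubics}. Paul–Tian's identification of the Futaki invariant of a one-parameter subgroup of $\PGL_{n+2}$ with a positive multiple of the GIT weight gives that K-(semi)stability implies GIT-(semi)stability. For the converse, we proceed in three steps:
\begin{enumerate}
\item By openness, since a K-stable smooth cubic exists (e.g. Fermat, via $\alpha_G$, or all smooth cubics by \cite{CP02}-type bounds), a general cubic is K-stable.
\item Given a GIT-polystable $X$, we smooth it in a family over a curve and take the K-polystable limit guaranteed by properness of K-moduli.
\item This limit is a cubic by the first step, hence GIT-polystable, and separatedness of the GIT quotient identifies it with $X$.
\end{enumerate}
This gives K-polystability, and the semistable case follows by taking polystable degenerations. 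Finally, we conclude that the K-moduli space is isomorphic to the GIT quotient, because both are good moduli spaces of the same stack.
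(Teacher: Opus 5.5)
Your outer argument (Paul--Tian, openness, properness, separatedness of the GIT quotient) is the same template the survey uses for cubic surfaces, which is all it indicates for this theorem. The trouble is in your key step, that every K-semistable limit is a cubic.

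First, the volume bounds do not force $|G|=1$. For $n=3$ they give $|G|\le 2$, and if $|G|=2$ then $\widehat{\vol}(\tilde x)\ge 81/4>16$. So Property \ref{propsofvol}(4) only tells you that the cover is \emph{smooth}. The $\mu_2$-quotients of a smooth germ that are free in codimension one are $\frac12(1,1,1)$ and $\frac12(0,1,1)\cong\bA^1\times A_1$. Both have $\widehat{\vol}=27/2>81/8$, so they survive every volume inequality. The second is even Gorenstein with local class group $\bZ/2$, so $L$ could a priori fail to be Cartier along a curve of $A_1$ points. Hence ``$X$ is Gorenstein with $L$ Cartier'' does not follow. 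You need a non-volume input, the analogue of excluding $\frac13(1,1)$ by non-smoothability in the surface case. For instance:
\begin{enumerate}
\item Obtain $L$, whose existence you assumed, as the restriction of the closure $\mathcal{H}$ of the hyperplane class on the total space $\calX$ of the smoothing.
\item Since $K_{\calX/C}$ and $\mathcal{H}$ are $\bQ$-Cartier, the relevant index-one cover extends over $\calX$.
\item The induced $\mu_2$-equivariant deformation of the smooth cover is trivial, so $\calX$ would be locally trivial at $x$, contradicting smoothness of the nearby fibres.
\end{enumerate}
Bare non-smoothability of the germ is not enough here, since $\bA^1\times A_1$ is smoothable.

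Second, for $n=4$ the key inequality is not available. You have $\widehat{\vol}(\tilde x)\ge 2\cdot\frac{256}{625}\cdot 243\approx 199$. To conclude, you need that every non-smooth 4-dimensional klt point has $\widehat{\vol}\le 2\cdot 3^4=162$. That is the ODP conjecture in dimension 4, which the survey treats as open: it states Property \ref{propsofvol}(4) only for $n=2,3$. As far as I know, \cite{LiuQuartics} does not establish it but circumvents it, reducing to the three-dimensional estimates of \cite{LXcubics} with the help of the Ambro--Kawamata non-vanishing theorem for Fano fourfolds. Your fallback is not an argument either, since the finite degree formula is about quasi-\'etale covers, not degenerations to tangent cones. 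Even granting the bound, you would still need the rigidity step above, e.g.\ for $\bA^1\times\frac12(1,1,1)$. So the fourfold case is unproved as written.

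A minor point: \cite{CP02} is about hypersurfaces of degree $n+1$ in $\bP^{n+1}$, so it says nothing about cubics of dimension at least $3$. For Step (1), use the Fermat cubic and $\alpha_G$ instead.
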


This is expected to hold in higher dimensions, and would follow from the conjectural Property \ref{propsofvol}(4) in higher dimensions. 

\begin{conj}
GIT = K stability for cubic hypersurfaces. 
\end{conj}

\subsection{Exercises}

\begin{enumerate}
    \item Prove that there are no nontrivial K-semistable degenerations of $\bP^2$ and $\bP^1 \times \bP^1$ (show any degeneration must be smooth using the local volume, and use rigidity of smooth Fanos).  
    
    \item The singularities $\frac{1}{4}(1,1)$ and $\frac{1}{4}(1,3)$ are smoothable, so could appear on K-semistable degenerations of del Pezzo surfaces.  What is the maximal degree of a del Pezzo surface for which they could appear?  Bonus: exhibit a degree $d$ del Pezzo surface with at least one of these singularities.

    \item Prove that any weighted projective space $\bP(a_0, \dots, a_n)$ not equal to $\bP^n$ is K-unstable. 
    
    \item In the proof of the index bound, we used the following: 
    
    If $x \in X$ is a smooth point (you may assume $x \in X = 0 \in \mathbb{A}^n$), show that \[\lim_{k \to \infty} \dfrac{\mathrm{length}(\calO_{x,X}/\mathfrak{a}_{k})}{k^n/n!} = 1.\] 

    Prove this.

    \item If $(X,cD)$ is a K-semistable log Fano pair, then the index bound inequality is:  
        \[ (-K_X-cD)^n \le \left( 1 + \frac{1}{n} \right)^n \widehat{\vol}(x,X,cD).\]
    If $x \in X$ is a quotient singularity by a group $G$ and $x \notin D$, then it is still true that 
    \[ \widehat{\vol}(x,X,D) \le \frac{n^n}{|G|}.\]

    Suppose $X = \bP(1,1,4)$ and $D \in \calO_X(4d)$ for some integer $d$.  
        \begin{enumerate}
            \item If $D$ passes through the singular point of $X$, show that it must have multiplicity at least 4 at the singular point, and that $(X,cD)$ is K-unstable for any $c \in (0,\frac{3}{2d})$ by computing $\beta(E)$ where $E$ is the exceptional divisor of the blow up of the singular point.  
            \item If $D$ does not pass through the singular point, use the index bound to prove that $(X,cD)$ could only be K-semistable if $c \ge \frac{3}{4d}$. 
        \end{enumerate}

    \item Let $X$ be a degree $d$ smooth del Pezzo surface.   Prove that $-K_X$ is very ample and the linear system $|-K_X|$ embeds $X \hookrightarrow \bP^d$ as a degree $d$ surface.

    \item If $X$ is a Gorenstein surface with ample $-K_X$ such that $(-K_X)^2 = 3$, prove that $|-K_X|$ is base point free and therefore very ample so $X$ embeds in $\bP^3$ as a cubic surface.

    \item\label{ex:Gorenstein} Prove that an $A_n$ singularity, the quotient $\mathbb{A}^2/ \mu_n$ where $\mu_n$ acts by $\zeta_n\cdot (x,y) = (\zeta_n x, \zeta_n^{-1} y)$, is Gorenstein.  (Hint/fact: any hypersurface singularity is Gorenstein, and the quotient of $\mathrm{Spec} k[x_1, \dots, x_n ]$ by a finite group $G$ is $\mathrm{Spec} (k[x_1, \dots, x_n]^G$, the ring of invariant poynomials under the group action $G$.) 

    \item Prove that, in the minimal resolution of an $A_n$ singularity, the exceptional divisor is a chain of smooth rational curves each with self intersection $-2$.  

    \item Another quotient singularity is the quotient $\mathbb{A}^2/ \mu_n$ where $\mu_n$ acts by $\zeta_n\cdot (x,y) = (\zeta_n x, \zeta_n y)$.  
        \begin{enumerate}
            \item The rational normal curve of degree $n$ is defined as the image of the embedding of $\bP^1 \to \bP^{n}$ given by evaluation on the sections of $\mathcal{O}_{\bP^1}(n)$ (i.e. $[x:y] \mapsto [x^n:x^{n-1}y:\dots:y^n]$).  Prove that the cone over this curve has a singularity of type $\frac{1}{n}(1,1)$. 
            \item Prove that, for any $n$, the exceptional divisor of the minimal resolution of the cone over the rational normal curve of degree $n$ is a single rational curve with self-intersection $-n$, and compute the discrepancy of the exceptional divisor. 
            \item Prove that the cone over the rational normal curve of degree $n$ is isomorphic to the weighted projective space $\bP(1,1,n)$.  
            \item Prove that $\bP(1,1,n)$ is K-semistable if and only if $n = 1$. 
        \end{enumerate}
\end{enumerate}

\section{Concluding remarks}

We conclude this survey with a few remarks on explicit K-stability and K-moduli in general.  K-stability has proven to be an extraordinarily useful idea in constructing moduli spaces of Fano varieties, yet it is often explicitly difficult to determine.  We have seen already, for each degree $d$, whether or not a smooth degree $d$ del Pezzo surface is K-(semi)stable.  We also have a complete description of the K-moduli space of smoothable del Pezzo surfaces of each degree $d$ by \cite{OSS}.  

For Fano threefolds, the situation is much more difficult.  In \cite{Fano}, the authors determine whether or not the general member of each family of smooth Fano threefolds is K-semistable.  However, at this time we still do not know for each family precisely which smooth members are K-semistable (all? only some?).  We are also still far from understanding the entire K-moduli space of smoothable Fano threefolds, although this has been worked out in several explicit examples (see, for example \cite{ADL21} for quartic double solids, \cite{218} for conic bundles that are double covers of $\bP^1 \times \bP^2$ branched over a (2,2) divisor, and a few additional examples in \cite{LiuZhao2-15, qubits}).  

In higher dimensions or in generality, the situation does not improve.  A fundamental difficult open question in this area are: 

\begin{question}
    Is every smooth Fano hypersurface of degree $\ge 3$ in $\bP^n$ K-stable?
\end{question}

We at least know the generic such hypersurface is K-stable by finding just one K-stable surface (e.g. the Fermat) and using openness of K-stability.  One could ask the same question for complete intersections, but in this case we do not even know a generic complete intersection is K-stable. 

An alternative perspective on the subject is to use the theory of \textit{wall crossing} to understand K-moduli.  This is the approach taking in several papers, including \cite{ADL19, ADL20, ADL21} and used in \cite{218}.  While not mentioned here, the theory of wall crossing will be surveyed in a separate article for the 2025 Summer Research Institute conference volume.

\bibliographystyle{alpha}
\bibliography{moduliofvarieties}

\end{document}